\theoremstyle{plain}
\newtheorem{theorem}{Theorem}
\newtheorem{corollary}{Corollary}
\newtheorem{lemma}{Lemma}
\newtheorem{proposition}{Proposition}
\theoremstyle{definition}
\newtheorem{definition}{Definition}
\theoremstyle{remark}
\journal{Discrete Mathematics}
\begin{document}

\begin{frontmatter}

%% Title, authors and addresses

%% use the tnoteref command within \title for footnotes;
%% use the tnotetext command for the associated footnote;
%% use the fnref command within \author or \address for footnotes;
%% use the fntext command for the associated footnote;
%% use the corref command within \author for corresponding author footnotes;
%% use the cortext command for the associated footnote;
%% use the ead command for the email address,
%% and the form \ead[url] for the home page:
%%
%% \title{Title\tnoteref{label1}}
%% \tnotetext[label1]{}
%% \author{Name\corref{cor1}\fnref{label2}}
%% \ead{email address}
%% \ead[url]{home page}
%% \fntext[label2]{}
%% \cortext[cor1]{}
%% \address{Address\fnref{label3}}
%% \fntext[label3]{}

\title{A topological framework for signed permutations}
%\subtitle{Do you have a subtitle?\\ If so, write it here}

%% use optional labels to link authors explicitly to addresses:
%% \author[label1,label2]{<author name>}
%% \address[label1]{<address>}
%% \address[label2]{<address>}

\author[dk]{Fenix W.D.\ Huang}
\ead{fenixprotoss@gmail.com}

\author[dk]{Christian M.\ Reidys\corref{cor1}}
\ead{duck@santafe.edu}

\cortext[cor1]{Corresponding author}
%\fntext[label1]{Department of Mathematics and Computer Science University of Southern Denmark Campusvej 55
%DK-5230 Odense M Denmark.}

%\fntext[label2]{Geb\"{a}ude 48, Raum 655, Postfach 3049, 67663 Kaiserslautern, Germany}
\address[dk]{Department of Mathematic and Computer science, University of
    Southern Denmark, Campusvej 55, DK-5230 Odense M, Denmark}

\begin{abstract}
In this paper we present a topological framework for studying signed
permutations and their reversal distance. As a result we can give
an alternative approach and interpretation of the Hannenhalli-Pevzner
formula for the reversal distance of signed permutations. Our approach
utlizes the Poincar\'e dual, upon which reversals act in a particular
way and obsoletes the notion of ``padding'' of the signed permutations.
To this end we construct a bijection between signed permutations and
an equivalence class of particular fatgraphs, called $\pi$-maps, and
analyze the action of reversals on the latter. We show that reversals
act via either slicing, gluing or half-flipping of external vertices,
which implies that any reversal changes the topological genus by at
most one. Finally we revisit the Hannenhalli-Pevzner formula employing
orientable and non-orientable, irreducible, $\pi$-maps.
\end{abstract}

\begin{keyword}
signed permutation, reversal distance, fatgraph, Poincar\'e dual, $\pi$-map.

\end{keyword}

\end{frontmatter}

%%%
%%%%%%%%%%%%%%%%%%%%%%%%%%%%%%%%%%%%%%%%%%%%%%%%%%%%%%%%%%%%%%%%%%%%%%%%%%%%%%%%%%%%%
%%%
\section{Introduction}\label{S:1}
%%%
%%%%%%%%%%%%%%%%%%%%%%%%%%%%%%%%%%%%%%%%%%%%%%%%%%%%%%%%%%%%%%%%%%%%%%%%%%%%%%%%%%%%%
%%%

In a seminal paper Hannenhalli and Pevzner \cite{Pevzner:99} give a polynomial time
algorithm as well as an explicit formula to compute the reversal distance of
a signed permutation $b_n$. Employing the framework of breakpoint graphs
\cite{Watterson:1982,Nadeau:1984,Bafna:1996},
they express the reversal distance as
\begin{equation}\label{E:kk}
d(b_n) =
\begin{cases}
b-c+h+1 \quad \text{\small if $h\ge 3$, $h\equiv 1\mod 2$, all $h$ hurdles are super-hurdles,} \\
b-c+h \quad \text{\small otherwise.}  \\
\end{cases}
\end{equation}
where $b$ is the number of breakpoints, $c$ is the number of cycles and $h$ is the
number of hurdles in the breakpoint graph of the signed permutation \cite{Pevzner:99}.
The algorithm was implemented with time complexity $O(n^4)$ ($O(n^5)$ with padding) in
\cite{Pevzner:99} and improved later by \cite{Kaplan:97} to $O(n^2)$ ($O(n^3)$ with padding).
A linear time algorithm for finding the reversal distance of signed permutation is given by
\cite{Bader:2001}.
Subsequent work on analyzing this question for unsigned permutations however, has not
yet succeeded, and only approximations can be found
\cite{Bafna:1996,Caprara:1997,Christie:1998,Kececioglu:95}.

In this paper we present a topological framework for studying signed permutations,
representing an alternative to the breakpoint graph. We believe that this framework can
be adapted for studying the transposition distance of unsigned permutations.
To understand this problem one has to study the action of transpositions on orientable
cell-complexes. Accordingly, the reversal distance of signed and the transposition
distance of unsigned permutations become closely related problems. The former can be
described by reversals acting on cell-complexes of non-orientable surfaces and the
latter by transpositions acting on cell-complexes of orientable surfaces.

Specifically, we construct from a signed permutation an equivalence class of particular
fatgraphs \cite{Penner:10}, called $\pi$-maps. While several features of $\pi$-maps can
be found also in breakpoint graphs, like for instance oriented cycles of external
vertices, the key difference lies in considering the Poincar\'e dual, which comes
natural within the topological framework.

$\pi$-maps offer a combinatorial interpretation for the action of reversals on signed
permutations. The combinatorial interpretation has several implications: first there
is no need to introduce any ``padding'' \cite{Pevzner:99}, i.e.~inflating the underlying
signed permutation into a specific one, having the same reversal distance. In
\cite{Pevzner:99} it is stipulated that padding can be avoided, but all constructions
are based on padded configurations, i.e.~are not directly applied to the original,
signed permutation. Secondly the proofs become very intuitive, a consequence of passing
to the Poincar\'e dual in which reversals have only  a ``local'' effect that has concrete,
combinatorial interpretation. To be explicit, the action of reversals does not relocate
the sectors around the center vertex: its does only affect their orientations.

Cell-complexes of non-orientable surfaces are typically studied using the
orientational double-cover \cite{Penner:10}. The double-cover allows to
mimic the permutation framework of the orientable cell complexes and fits
therefore into the notions of half-edges and fatgraphs \cite{Penner:10}.
We shall, however, adopt here a different point of view: we base our definition
of fatgraph on the notion of sectors, which is a pair of half-edges, together
with an orientation. While this allows to reduce everything to permutations of
sectors, one has to give up the fact that the fixed-point free involution
algebraically relates vertices and boundary components, see Section~\ref{S:2b}.

The paper is organized as follows: first we recall in Section~\ref{S:2b} some
basic facts on signed permutations and on fatgraphs.
In Section~\ref{S:pi} we construct from a signed permutation a $\pi$-map. This
association is, however, not unique. This gives rise to a bijection between
signed permutations and certain equivalence classes of $\pi$-maps. We then
proceed analyzing basic properties of $\pi$-maps.

In Section~\ref{S:irreducible} we characterize irreducibility and components of
$\pi$-maps. We study orientable and non-orientable components, characterized by
the combinatorial $\sigma$-crossing.

We then analyze in Section~\ref{S:reversal} the action of reversals on $\pi$-maps
and show that reversals act
by either slicing, gluing or half-flipping of vertices. As a result the action
of reversals is Lipschitz continuous with respect to topological genus and implies
that the genus of the $\pi$-map is a sharp lower bound for the reversal distance.

In Section \ref{S:genus} we revisit a result of \cite{Pevzner:99} concerning the
successive breakdown of non-orientable components. As a consequence topological
genus is a sharp lower bound for the reversal distance.

In Section~\ref{S:orientable} we collect some facts about the action of reversals
on a set of orientable components.

Finally, we reformulate in Section~\ref{S:hurdles} the treatment of hurdles of
\cite{Pevzner:99} into the language of $\pi$-maps and give an interpretation of
the Hannenhalli-Pevzner formula, eq.~(\ref{E:kk}). Namely, two of the three terms
express just the topological genus of the underlying $\pi$-map.

%%%
%%%%%%%%%%%%%%%%%%%%%%%%%%%%%%%%%%%%%%%%%%%%%%%%%%%%%%%%%%%%%%%%%%%%%%%%%%%%%%%%%%%%%
%%%
\section{Signed permutations and fatgraphs}\label{S:2b}
%%%
%%%%%%%%%%%%%%%%%%%%%%%%%%%%%%%%%%%%%%%%%%%%%%%%%%%%%%%%%%%%%%%%%%%%%%%%%%%%%%%%%%%%%
%%%

Let $S_n$ denote the symmetric group over $[n]$. A permutation is a
one-to-one mapping $x\colon [n]\longrightarrow [n]$ and represented
as an $n$-tuple $ x=[x_1,\dots,x_n] $, where $x_i=x(i)$. Furthermore,
let $\varepsilon=[\varepsilon_1,\dots,\varepsilon_n] \in \{-1,+1\}^n$,
the $n$-tuple of ``signs''.

A signed permutation is a pair $b_n=[\varepsilon,x]=[\varepsilon_1 x_1,
\dots,\varepsilon_n x_n]$ and we denote the set of signed
permutations by $\mathfrak{B}_n$. Clearly we have $\vert
\mathfrak{B}_n\vert =2^nn!$ and $\mathfrak{B}_n$ carries a natural
structure of a group via
\begin{equation*}
[\varepsilon_x,x]\cdot [\varepsilon_y,y]= [\varepsilon_x\cdot
\varepsilon_y^{x},x\cdot y],\quad \text{\rm where}\
[\varepsilon_y^{x}]_i=[\varepsilon_y]_{x(i)}.
\end{equation*}
That is, there is an additional action on $\varepsilon_y$ when
commuting it with $x$, given by the $x$-permutation of the
coordinates. A reversal $\rho_{i,j}$ is the particular signed
permutation:
\begin{equation*}
\rho_{i,j} = [\varepsilon^\rho_{i,j},(1,\dots, i-1, j, j-1, \dots,
i, j+1,\dots)],
\end{equation*}
where
\begin{equation*}
(\varepsilon^\rho_{i,j})_h=
\begin{cases}
-1 & \text{\rm for $i\le h\le j$}\\
+1 & \text{\rm otherwise.}
\end{cases}
\end{equation*}
Accordingly, a reversal $\rho_{i,j}$ acts (via right-multiplication)
on $B_n$ as follows:
\begin{eqnarray*}
[\varepsilon_1x_1,\dots,\varepsilon_ix_i,\dots,\varepsilon_jx_j,
\dots,\varepsilon_nx_n]\cdot \rho_{i,j} = \\ \text{}
[\varepsilon_1x_1,\dots,-\varepsilon_jx_j,\dots,-\varepsilon_ix_i,
\dots, \varepsilon_nx_n].
\end{eqnarray*}
$\rho_{i,j}$ transforms the subsequence $(\varepsilon_ix_i,\dots,\varepsilon_j
x_j)$ into $(-\varepsilon_jx_j,\dots,-\varepsilon_ix_i)$ by inverting order
and signs within the interval $[i,j]$, for instance
\begin{eqnarray*}
[-5,+1,-3,+2,+4,+6] \cdot \rho_{3,4} & = &
[-5,+1,-2,+3,+4,+6] \\ \text{}
[-5,+1,-2,+3,+4,+6] \cdot \rho_{3,3} & = & [-5,+1,+2,+3,+4,+6].
\end{eqnarray*}

%%%%%%%%%%%%
%%%%%%%%%%%%%%%%%%%%%%%%%%%%%%%%%%%%%%%%%%%%%%%%%%%%%
%%%%%%%%%%%%
\begin{figure}[t]
\begin{center}
  \includegraphics[width=0.4\columnwidth]{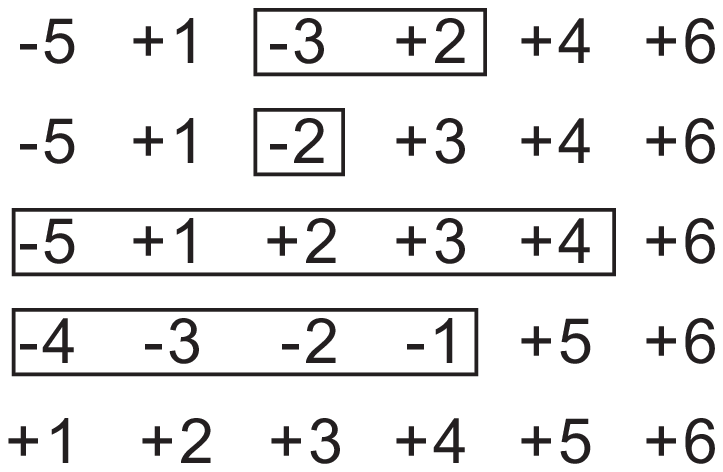}
\end{center}
\caption{\small
Signed permutations and the action of reversals. The signed permutation
$[-5, +1, -3, +2, +4, +6]$ is transformed into the identity via the actions
of the four reversals $\rho_{2,3}$, $\rho_{3,3}$, $\rho_{1,5}$ and $\rho_{1,4}$.
}
\label{F:permutation}
\end{figure}

%%%%%%%%%%%%
%%%%%%%%%%%%%%%%%%%%%%%%%%%%%%%%%%%%%%%%%%%%%%%%%%%%%
%%%%%%%%%%%%

A sector $x=(\lambda_x,\omega_x)$ is a pair consisting of a label
$\lambda_x\in \{1,\ldots,2n\}$ and an orientation denoted by $\omega_x\in
\{+,-\}$. We may depict $x$ as a labeled, oriented wedge, composed by an in-
and out-half edge. We denote counterclockwise and clockwise orientations
by $\omega_x=+$ and $\omega_x=-$, respectively. By abuse of notation we shall also
write a sector alternatively as $\pm \lambda_x$. By abuse of notion, we shall refer to
an counterclockwise oriented sector $x=(\lambda_x, +)$ as $\lambda_x$ and a clockwise
oriented sector $x=(\lambda_x, -)$ as $-\lambda_x$.

%%%
%%%%%%%%%%%%%%%%%%%%%%%%%%%%%%%%%%%%%%%%%%%%%%%%%%%%%%%%%%%%%%%%%%%%%%%%%%
%%%
\begin{definition}
A fatgraph is a triple $\mathbb{G}=(H_{2n},\sigma,\gamma)$ where
$H_{2n}=\{1,\ldots,2n\}$ is a set of labeled sectors and $\sigma,\gamma$
are permutations of sectors such that to each pair $(x,\sigma(x))$ there
exists a unique $(y,\sigma(y))$ such that
$$
\diagram
x \dline^\gamma\rto^\sigma & \sigma(x) \\
\sigma(y)   & y\lto^\sigma\uline^\gamma
\enddiagram
\quad \text{\rm or }\quad
\diagram
x \drline^\gamma\rto^\sigma & \sigma(x) \\
\sigma(y)\urline_\gamma   & y\lto^\sigma
\enddiagram
$$
The directions of the $\gamma$-verticals are implied by the orientations of
pairs of sectors $(x,\sigma(x))$ and $(y,\sigma(y))$ and we shall refer
to the above diagrams as {\it untwisted} and {\it twisted} ribbons, respectively.
The genus of a fatgraph $\mathbb{G}$ is the genus of its underlying
topological quotient space, obtained by identifying the $\gamma$-sides of
the ribbons.
\end{definition}
%%%
%%%%%%%%%%%%%%%%%%%%%%%%%%%%%%%%%%%%%%%%%%%%%%%%%%%%%%%%%%%%%%%%%%%%%%%%%%%%%%%%%%%%%%%%%
%%%%

The cycles of the permutation $\sigma$ are called vertices, $v$, i.e.~a vertex is
a cycle of sectors. As in the case of orientable fatgraphs \cite{Penner:10} we
follow the convention that vertex-cycles are traversed counterclockwise.
Note that $\sigma$ and $\gamma$ are unsigned permutations of sectors.

A cycle $\gamma_1=(s^1_1,\ldots,s^1_k)$ of $\gamma$ can be depicted as to connect
the out-half edge of $s^1_i$ with the in-half edge of $s^1_{i+1}$. By construction
this does not imply that successive sectors have equal orientations, since traversing
twisted ribbons changes the latter. A cycle of $\gamma$ is called a boundary component.
A fatgraph is called {\it unicellular} if $\gamma$ is an unique cycle.

A ribbon can be denoted by $((x,\sigma(x)),(y,\sigma(y))$. For untwisted ribbons we
have $\omega_x=\omega_{\sigma(y)}$ and $\omega_{\sigma(x)}=\omega_{y}$, while for twisted
ribbons $\omega_{x}=-\omega_{y}$ and $\omega_{\sigma(x)}=-\omega_{\sigma(y)}$ holds.
Ribbons with mono- and bi-directional verticals are called $m$- and $b$-ribbons,
respectively. For $m$-ribbons we have $\omega_x=-\omega_{\sigma(x)}$ (as well as
$\omega_y=-\omega_{\sigma(y)}$) and for $b$-ribbons $\omega_x=\omega_{\sigma(x)}$ (as well
as $\omega_y=\omega_{\sigma(y)}$).

A flip of a vertex $v$ is obtained by reversing the cyclic ordering of the sectors
incident on $v$ and changing their respective orientations. This is tantamount to
replacing any untwisted and twisted ribbon, that is incident to another vertex by
a twisted and untwisted ribbon, respectively. Furthermore loops remain unchanged.
Flipping does not affect the underlying topological quotient space, see
Fig.~\ref{F:flip} (B). Indeed, by the fundamental structure theorem of surfaces
\cite{Massey:69}, the latter depends only on the relative directions of the sides of ribbons
which are unaffected by flipping. In Fig.~\ref{F:flip} we illustrate the concept of
fatgraphs and vertex-flips.

%%%
%%%%%%%%%%%%%%%%%%%%%%%%%%%%%%%%%%%%%%%%%%%%%%%%%%%%%%%%%%%%%%%%%%%%%%%%%%%%%%%%%%%%%%%%%
%%%%
\begin{definition}\label{D:iso}
Two fatgraphs $\mathbb{G}_1=(H_{2n}^1,\sigma^1,\gamma^1)$ and $\mathbb{G}_2=(H_{2n}^2,
\sigma^2,\gamma^2)$ are isomorphic if there is an bijection, mapping $H_{2n}^1,
\sigma^1$ and $\gamma^1$ into $H_{2n}^2,\sigma^2$ and $\gamma^2$, respectively.
\end{definition}
%%%
%%%%%%%%%%%%%%%%%%%%%%%%%%%%%%%%%%%%%%%%%%%%%%%%%%%%%%%%%%%%%%%%%%%%%%%%%%%%%%%%%%%%%%%%%
%%%%

By construction, this bijection preserves the cyclic order of the sectors around the
vertices as well as the order of the sectors along the boundary component and maps
ribbons into ribbons.
There are many additional ways to define isomorphisms of fatgraphs, see
\cite{Penner:10}. Definition~\ref{D:iso} is tailored to facilitate the
identification of components with irreducible fatgraphs, see
Section~\ref{S:irreducible}.

%%%%%%%%%%%%
%%%%%%%%%%%%%%%%%%%%%%%%%%%%%%%%%%%%%%%%%%%%%%%%%%%%%
%%%%%%%%%%%%
\begin{figure}[t]
\begin{center}
  \includegraphics[width=0.8\columnwidth]{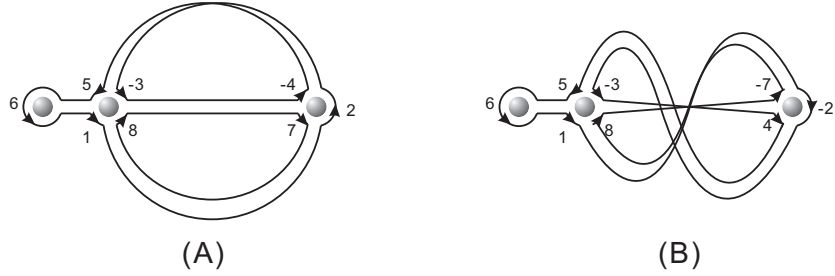}
\end{center}
\caption{\small
(A) a fatgraph with vertices $(1,8,-3,5),(6),(2,-4,7)$ and boundary components
$(1,2,-3,-4,5,6),(7,8)$. For the pair of sectors, $(-3,5)$, there is the corresponding
pair, $(2,-4)$, such that $-3=\gamma(2)$ and $5=\gamma(4)$, forming a twisted ribbon.
There are in addition the three untwisted ribbons: $((1,8),(7,2))$, $((5,1),(6,6))$ and
$((8,-3),(-4,7))$.
(B) flipping $(2,-4,7)$ to $(-2,-7,4)$: the flipping changes the twist property of ribbons
connecting $(2,-4,7)$ to other vertices.
}
\label{F:flip}
\end{figure}
%%%%
%%%%%%%%%%%%%%%%%%%%%%%%%%%%%%%%%%%%%%%%%%%%%%%%%%%%%%%%%%%%%%%%%%%%%%%%%%%%
%%%%

A fatgraph, $\mathbb{G}$, represents a cell-complex of a surface $F(\mathbb{G})$:
the topological quotient space $F(\mathbb{G})$ is obtained by identifying the sides
of the $\mathbb{G}$-ribbons using the simplicial homeomorphism, see \cite{Penner:10}.
Accordingly the genus, $g$, of $\mathbb{G}$ is the topological genus of $F(\mathbb{G})$,
i.e.~
$$
2-g-b=v-e,
$$
where $b,e,v$ are the numbers of boundary components, ribbons and vertex-cycles of
$\mathbb{G}$. We shall write $\mathbb{G}_{n,g}$ if we wish to emphasize that $\mathbb{G}$
is a fatgraph having $2n$ sectors and genus $g$.

%%%
%%%%%%%%%%%%%%%%%%%%%%%%%%%%%%%%%%%%%%%%%%%%%%%%%%%%%%%%%%%%%%%%%%%%%%%%%%
%%%
\begin{lemma}\label{L:dual}{\bf (Poincar\'e dual)}
Let $\mathbb{G}_{n,g}=(H_{2n},\sigma,\gamma)$ be a fatgraph, then we have:\\
{\rm (a)} $\mathbb{G}^*_{n,g}=(H_{2n},\gamma,\sigma)$ is a fatgraph and
          $(\mathbb{G}_{g,n}^*)^*=\mathbb{G}_{n,g}$,\\
{\rm (b)} if $\mathbb{G}_{n,g}=(H_{2n},\sigma,\gamma)$ is orientable then
          $\mathbb{G}^*_{n,g}=\mathbb{G}_{n,g}$.
\end{lemma}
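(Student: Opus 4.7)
My plan is to split the argument along the two parts of the lemma.

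For part (a), I would observe that the ribbon axiom defining a fatgraph is perfectly symmetric in the two permutations $\sigma$ and $\gamma$: each ribbon, twisted or untwisted, is a square of four sectors whose two horizontal $\sigma$-arrows are paired by two vertical $\gamma$-arrows, and reading the same square with horizontal and vertical sides interchanged exhibits it as a square of two horizontal $\gamma$-arrows paired by two vertical $\sigma$-arrows. This $90^{\circ}$ rotation preserves the twisted-versus-untwisted type of the square. Consequently, the existence and uniqueness requirement for the triple $(H_{2n},\gamma,\sigma)$ is verified by the very same collection of squares, so $\mathbb{G}^*$ is again a fatgraph. The involution identity $(\mathbb{G}^*)^*=\mathbb{G}$ is then immediate, since swapping the two permutations in a triple twice is the identity operation.

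For part (b), I would invoke the Euler-characteristic relation $2-g-b=v-e$ stated immediately before the lemma. Passing from $\mathbb{G}$ to $\mathbb{G}^*$ interchanges the cycles of $\sigma$ with those of $\gamma$, so $v(\mathbb{G}^*)=b(\mathbb{G})$ and $b(\mathbb{G}^*)=v(\mathbb{G})$, while the ribbon count $e=n$ is fixed by construction. Substituting these equalities into the Euler relation yields $g(\mathbb{G}^*)=g(\mathbb{G})$. The orientability hypothesis then ensures that $\mathbb{G}^*$ is again orientable, because the rotation argument of (a) preserves the twist type of every ribbon and orientability is detected by the absence of twisted ribbons. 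Under the authors' convention of tagging a generic fatgraph by its sector count and genus, the displayed equality $\mathbb{G}^*_{n,g}=\mathbb{G}_{n,g}$ is precisely the assertion that these two numerical invariants are preserved by dualization.

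The main obstacle I anticipate is pinning down the authors' notational convention for the displayed equality. A literal equality of fatgraph triples cannot hold in general: any fatgraph with $v\neq b$ dualizes to one with exchanged vertex and boundary counts, and these are not even isomorphic in the sense of Definition~\ref{D:iso}. The only reading under which part (b) is both true and non-vacuous is the numerical one above, in which $\mathbb{G}_{n,g}$ is a placeholder recording only the sector count $2n$ and the genus $g$. Once that interpretive point is granted, both halves of the lemma reduce to short bookkeeping arguments on the permutations $\sigma$ and $\gamma$.
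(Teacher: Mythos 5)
Your argument for part (a) contains a factual error that also propagates into your part (b) argument. You claim that the $90^\circ$ rotation of a ribbon-square \emph{preserves the twisted-versus-untwisted type}; the paper states exactly the opposite immediately after the lemma: ``The Poincar\'e dual maps untwisted ribbons into $b$-ribbons and twisted ribbons into $m$-ribbons. In general, the dual does not preserve twisted or untwisted ribbons.'' Figure~\ref{F:fat} even gives a concrete counterexample: the untwisted $m$-ribbon $((8,-3),(-4,7))$ dualizes to the twisted $b$-ribbon $((-3,-4),(7,8))$. What the dualization actually preserves is the square as a set of four sectors and the condition that $\sigma$ and $\gamma$ pair them into a ribbon; whether the resulting ribbon is twisted or untwisted is governed by the relative orientations of the sectors and is generally different after the swap. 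This is precisely why the paper's proof of (a) goes through all sixteen-odd orientation subcases for untwisted and twisted ribbons separately rather than simply ``rotating the square.'' The structural observation you make about the symmetry of the ribbon axiom in $\sigma$ and $\gamma$ is the right starting point, but you need the case analysis to conclude that the result is again a legal ribbon, and you must not claim twist-preservation.

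Your treatment of part (b) also misses the point of what is being proved. You read $\mathbb{G}^*_{n,g}=\mathbb{G}_{n,g}$ as a purely numerical statement (same $n$, same $g$), and your Euler-characteristic computation establishing $g(\mathbb{G}^*)=g(\mathbb{G})$ is correct but applies to every fatgraph, orientable or not, so it cannot be the content of the orientability hypothesis. The paper proves something strictly stronger in the orientable case: since all sectors carry orientation $\oplus$, every ribbon is an untwisted $m$-ribbon of the form with $\sigma(x)=\gamma(y)$ and $\sigma(y)=\gamma(x)$, and these coincidences make the $\sigma$-sides and $\gamma$-sides of each square interchangeable, so that swapping $\sigma$ and $\gamma$ sends each ribbon literally to itself. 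That is the sense in which $\mathbb{G}^*_{n,g}=\mathbb{G}_{n,g}$ --- the fatgraph is fixed ribbon-by-ribbon under dualization --- and it fails for non-orientable fatgraphs even though the numerical invariants $(n,g)$ are still preserved. You rightly noticed that the displayed equality cannot be a literal equality of triples, but the correct resolution is the ribbon-level fixed-point statement the paper proves, not a retreat to numerical invariants.
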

%%%
%%%%%%%%%%%%%%%%%%%%%%%%%%%%%%%%%%%%%%%%%%%%%%%%%%%%%%%%%%%%%%%%%%%%%%%%%%
%%%
\begin{proof}
Switching $\gamma$ and $\sigma$ for untwisted ribbons results in
$$
\diagram
x \dline^\gamma\rto^\sigma & \sigma(x) \\
\sigma(y)   & y\lto^\sigma\uline^\gamma
\enddiagram
\quad \mapsto \quad
\diagram
x \dline^\sigma\rto^\gamma & \gamma(x) \\
\gamma(y)   & y\lto^\gamma\uline^\sigma
\enddiagram
$$
Depending on the directions of the verticals we derive exactly one of the
following ribbons
$$
\diagram
\gamma(y) \rto^\sigma & x\dto^\gamma  \\
 y\uto^\gamma         & \gamma(x)\lto^\sigma
\enddiagram
\qquad
\diagram
\gamma(y) \rto^\sigma & x\dlto^\gamma \\
\gamma(x)            & y\lto^\sigma\ulto_\gamma
\enddiagram
\qquad
\diagram
x \rto^\sigma\dto^\gamma & \gamma(y)   \\
\gamma(x)          & y\lto^\sigma\uto^\gamma
\enddiagram
\qquad
\diagram
x \rto^\sigma\drto^\gamma & \gamma(y)   \\
y  \urto_\gamma                     & \gamma(x)\lto^\sigma
\enddiagram
$$
We analogously analyze the effect of switching $\gamma$ and $\sigma$ for
twisted ribbons:
$$
\diagram
x \drline^\gamma\rto^\sigma & \sigma(x) \\
\sigma(y)\urline_\gamma   & y\lto^\sigma
\enddiagram
\quad \mapsto\quad
\diagram
x \drline^\sigma\rto^\gamma & \gamma(x) \\
\gamma(y)\urline_\sigma   & y\lto^\gamma
\enddiagram
$$
where the verticals have an unique direction induced by the orientations
of the sectors. Depending on these directions we derive exactly one of the
following ribbons
$$
\diagram
x \dto^\gamma\rto^\sigma & y \dto^\gamma\\
   \gamma(x)       &   \lto^\sigma\gamma(y)
\enddiagram
\qquad
\diagram
x \drto_\gamma\rto^\sigma & y \dlto^\gamma\\
  \gamma(y)        &   \lto^\sigma\gamma(x)
\enddiagram
\qquad
\diagram
y \dto_\gamma\rto^\sigma & x \dto^\gamma\\
  \gamma(y)        &   \lto^\sigma\gamma(x)
\enddiagram
\qquad
\diagram
y \drto_\gamma\rto^\sigma & x \dlto^\gamma\\
  \gamma(x)        &   \lto^\sigma\gamma(y)
\enddiagram
$$
Accordingly, each $\mathbb{G}_{n,g}$-ribbon is mapped uniquely into a
ribbon in $\mathbb{G}^*_{n,g}$ and $\mathbb{G}^*_{n,g}=(H,\gamma,\sigma)$ is a
fatgraph. By construction this dualization preserves topological genus.

It remains to prove the second assertion. Suppose $\mathbb{G}_{n,g}=(H_{2n},\sigma,
\gamma)$ is orientable. Then all sectors are $\oplus$ and as a result we have only
untwisted ribbons of the following form
$$
\diagram
x \dto^\gamma\rto^\sigma & \sigma(x)=\gamma(y) \\
\sigma(y)=\gamma(x)   & y\lto^\sigma\uto^\gamma
\enddiagram
$$
Now switching $\sigma$ and $\gamma$ we derive
$$
\diagram
x \dto^\gamma\rto^\sigma & \gamma(y)=\sigma(x) \\
\gamma(x)=\sigma(y)   & y\lto^\sigma\uto^{\gamma}
\enddiagram
$$
Accordingly, each $\mathbb{G}_{n,g}$-ribbon is mapped identically to
a $\mathbb{G}^*_{n,g}$-ribbon.
Furthermore, if $\mathbb{G}_{n,g}=(H_{2n},\sigma,\gamma)$ is orientable, then
$\mathbb{G}^*_{n,g}=(H_{2n},\gamma,\sigma)$ is.
\end{proof}

%%%%%%%%%%%%
%%%%%%%%%%%%%%%%%%%%%%%%%%%%%%%%%%%%%%%%%%%%%%%%%%%%%
%%%%%%%%%%%%
\begin{figure}[t]
\begin{center}
\includegraphics[width=0.9\columnwidth]{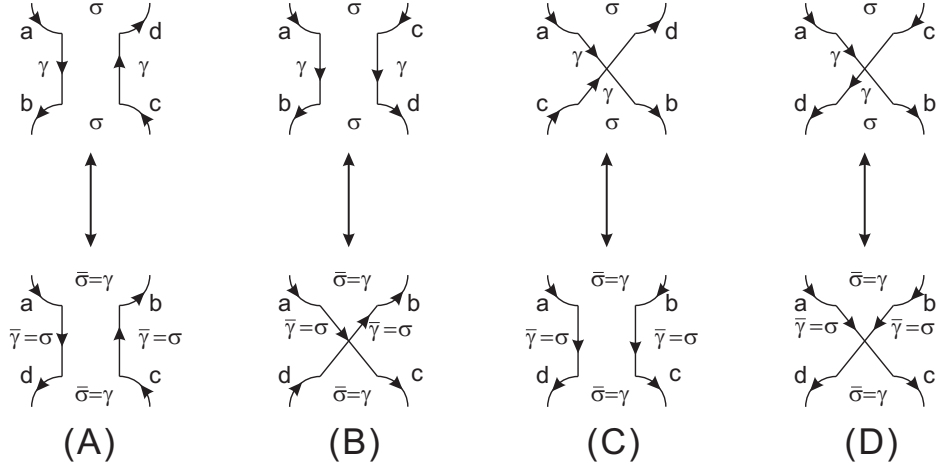}
\end{center}
\caption{\small Dualization: mapping ribbons into ribbons.
Assume that we have $b=\gamma(a)$ and $d=\gamma(c)$.
(A) $d=\sigma(a)$ and $b=\sigma(c)$,
(B) $c=\sigma(a)$ and $b=\sigma(d)$,
(C) $d=\sigma(a)$ and $c=\sigma(b)$,
(D) $c=\sigma(a)$ and $d=\sigma(b)$.
Let $\overline{\sigma}=\gamma$ and $\overline{\gamma}=\sigma$,
then $\overline{\sigma}(a)=\gamma(a)=b$ and $\overline{\sigma}(c)
=\gamma(a)=d$.
(A) $d=\overline{\gamma}(a)=\sigma(a)$
$b=\overline{\gamma}(c)=\sigma(c)$,
(B) $c=\overline{\gamma}(a)=\sigma(a)$ and
$b=\overline{\gamma}(d)=\sigma(d)$,
(C) $d=\overline{\gamma}(a)=\sigma(a)$ and
$c=\overline{\gamma}(b)=\sigma(b)$,
(D) $c=\overline{\gamma}(a)=\sigma(a)$ and
$d=\overline{\gamma}(b)=\sigma(b)$.
}
\label{F:ribbon}
\end{figure}

%%%%%%%%%%%%
%%%%%%%%%%%%%%%%%%%%%%%%%%%%%%%%%%%%%%%%%%%%%%%%%%%%%
%%%%%%%%%%%%

The Poincar\' e dual maps untwisted ribbons into $b$-ribbons and twisted ribbons
into $m$-ribbons. In general, the dual does not preserve twisted or untwisted ribbons
and in particular, the dual of a fatgraph having an unique vertex-cycle is a
fatgraph with an unique boundary component.
We illustrate dualization in
Fig.~\ref{F:fat}.

%%%%
%%%%%%%%%%%%%%%%%%%%%%%%%%%%%%%%%%%%%%%%%%%%%%%%%%%%%%%%%%%%%%%%%%%%%%%%%%%%%%%%%%%
%%%%
\begin{figure}[t]
\begin{center}
  \includegraphics[width=0.8\columnwidth]{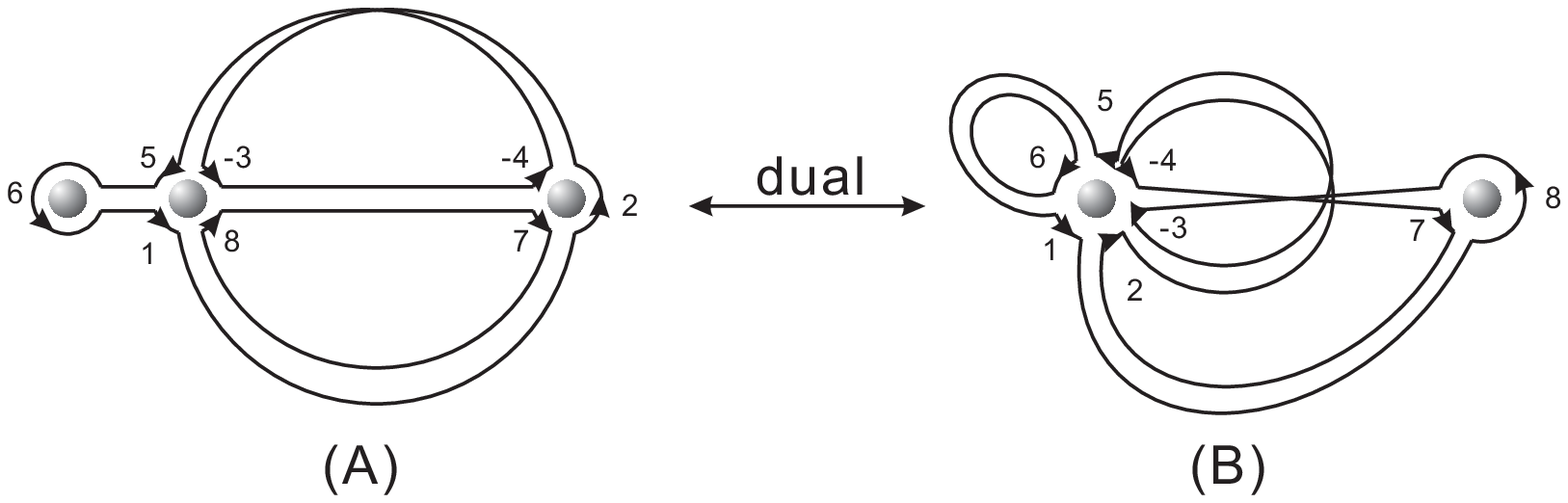}
\end{center}
\caption{\small
(A) a fatgraph with $\sigma=(1,8,-3,5)(6)(2,-4,7)$ and $\gamma=(1,2,-3,-4,5,6)(7,8)$.
(B) the dual of (A) obtained by exchanging $\sigma$ and $\gamma$. In general the
dualization does not the preserve twist property: the untwisted $m$-ribbon $((8,-3),(-4,7))$,
contained in (A), dualizes into the twisted $b$-ribbon $((-3,-4),(7,8))$.
}
\label{F:fat}
\end{figure}

%%%%
%%%%%%%%%%%%%%%%%%%%%%%%%%%%%%%%%%%%%%%%%%%%%%%%%%%%%%%%%%%%%%%%%%%%%%%%%%%%%%%%%%%%%%
%%%%

%%%
%%%%%%%%%%%%%%%%%%%%%%%%%%%%%%%%%%%%%%%%%%%%%%%%%%%%%%%%%%%%%%%%%%%%%%%%%%%%%%%%%%%%%%%
%%%
\section{$\pi$-maps}\label{S:pi}
%%%
%%%%%%%%%%%%%%%%%%%%%%%%%%%%%%%%%%%%%%%%%%%%%%%%%%%%%%%%%%%%%%%%%%%%%%%%%%%%%%%%%%%%%%%
%%%

In this section, we shall give a bijection between signed permutations and
certain equivalence classes of unicellular fatgraphs. This correspondence
is the cornerstone of the paper.

Let $z_{2k+1}=(2k+1,\omega_{2k+1})$, i.e.~$z_{2k+1}$ has label $2k+1$ but arbitrary
orientation.

%%%%%%
%%%%%%%%%%%%%%%%%%%%%%%%%%%%%%%%%%%%%%%%%%%%%%%%%%%%%%%%%%%%%%%%%%%%%%%%%%%%%%%%%%%%%%%%%%%
%%%%%%
\begin{definition}\label{D:pi}{\bf ($\pi$-map)}
A unicellular fatgraph, $\mathbb{P}_{n,g}=(H_{2n+2},\gamma,\sigma)$,
is a $\pi$-map if it contains a vertex of the form
$$
v^*=(z_{2n+1},z_{2n-1},\ldots,z_{3},z_1).
$$
$V^*$ is called the center.
A $\pi$-map, $\mathbb{P}_{n,g}$, is called {\it reduced} if it does not contain
any vertices of degree one.
\end{definition}
%%%%%%
%%%%%%%%%%%%%%%%%%%%%%%%%%%%%%%%%%%%%%%%%%%%%%%%%%%%%%%%%%%%%%%%%%%%%%%%%%%%%%%%%%%%%%%%%%%
%%%%%%

In Fig.~\ref{F:id} we illustrate the concept of a $\pi$-map.

%%%%%%%%%%%%
%%%%%%%%%%%%%%%%%%%%%%%%%%%%%%%%%%%%%%%%%%%%%%%%%%%%%
%%%%%%%%%%%%
\begin{figure}[t]
\begin{center}
  \includegraphics[width=0.35\columnwidth]{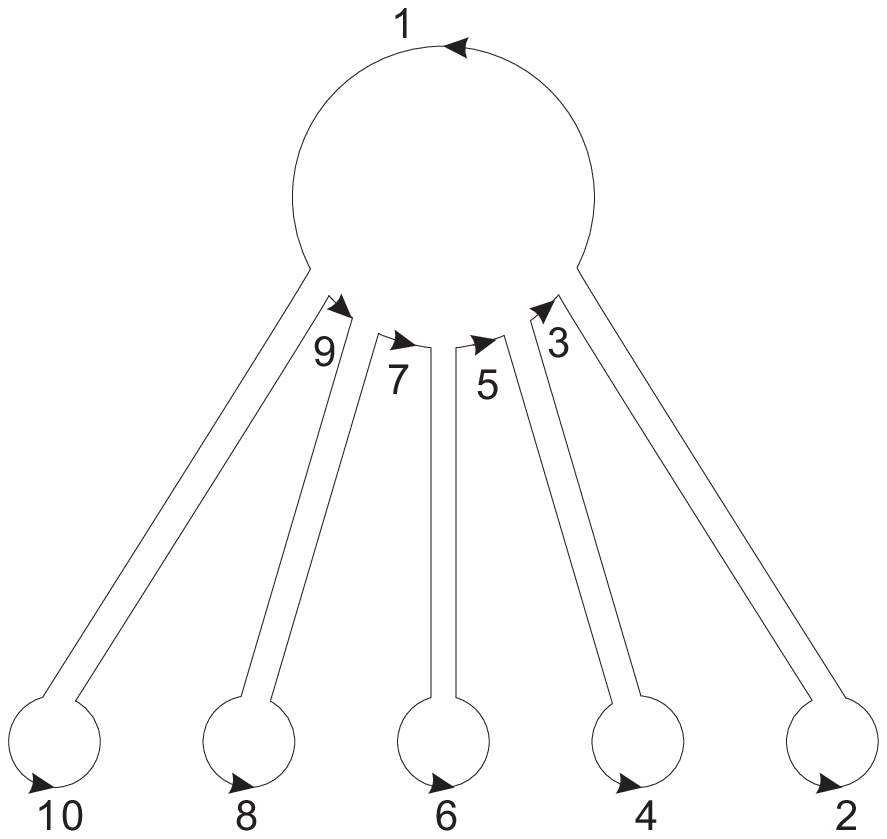}
\end{center}
\caption{\small
$\pi$-maps: here $v^*=(9,7,5,3,1)$ and
          $(2)(4)(6)(8)(10)$ are the external vertices.
}
\label{F:id}
\end{figure}

%%%%
%%%%%%%%%%%%%%%%%%%%%%%%%%%%%%%%%%%%%%%%%%%%%%%%%%%%%%%%%%%%%%%%%%%%%%%%%%%%
%%%%

%%%%%%
%%%%%%%%%%%%%%%%%%%%%%%%%%%%%%%%%%%%%%%%%%%%%%%%%%%%%%%%%%%%%%%%%%%%%%%%%%%%%%%%%%%%%%%%%%%
%%%%%%
\begin{definition}\label{D:equivalence}{\bf (Equivalence)}
Two $\pi$-maps, $\mathbb{P}_{n,g}^1,\mathbb{P}_{n,g}^2$, with
$$\gamma^1 = (x_{2n+2}^1,x_{2n+1}^1,\ldots,x_2^1,x_1^1), \quad \text{and} \quad
\gamma^2 =
(x^2_{2n+2},x^2_{2n+1},\ldots,x_2^2,x_1^2)
$$
are equivalent, $\mathbb{P}^{1}_{n,g}
\sim \mathbb{P}^2_{n,g}$, iff
\begin{itemize}
\item $\mathbb{P}_{n,g}^1$ and $\mathbb{P}_{n,g}^2$ have the same center,
\item $\lambda_{x^1_{2k}}=\lambda_{x^2_{2\mu(k)}}$ for some permutation $\mu$,
\item the external $\mathbb{P}_{n,g}^1$-vertices can be transformed into
      $\mathbb{P}_{n,g}^2$-vertices via flipping.
\end{itemize}
Let $[\mathbb{P}_{n,g}]$ denote the equivalence class of $\mathbb{P}_{n,g}$ and
$\mathfrak{P}_{n,g}$ be the set of equivalence classes.
\end{definition}
%%%%%%
%%%%%%%%%%%%%%%%%%%%%%%%%%%%%%%%%%%%%%%%%%%%%%%%%%%%%%%%%%%%%%%%%%%%%%%%%%%%%%%%%%%%%%%%%%%
%%%%%%

Clearly, any two equivalent $\pi$-maps have the same topological genus. The equivalence class
is obtained by permutation of the even labels of the external vertices and by flipping them.

%%%%
%%%%%%%%%%%%%%%%%%%%%%%%%%%%%%%%%%%%%%%%%%%%%%%%%%%%%%%%%%%%%%%%%%%%%%%%%%%%%%%%%%%%%%%%%%%%%
%%%%
\begin{proposition}\label{P:char}
There exists a bijection $\varphi_n$ between the set of signed permutations,
$\mathfrak{B}_n$, and equivalence classes of $\pi$-maps, $\mathfrak{P}_{n,g}$,
$$
\varphi_n \colon \mathfrak{B}_n\longrightarrow \mathfrak{P}_n, \quad
b_n \mapsto [\mathbb{P}_{n,g}] .
$$
\end{proposition}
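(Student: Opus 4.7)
The plan is to construct $\varphi_n$ explicitly and to exhibit an inverse by reading a signed permutation off the unique boundary component of any representative $\pi$-map.

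First I would define $\varphi_n(b_n)$ for $b_n=[\varepsilon_1 x_1,\ldots,\varepsilon_n x_n]$ by choosing a canonical representative. The center vertex $v^*=(z_{2n+1},z_{2n-1},\ldots,z_3,z_1)$ is dictated by Definition \ref{D:pi}, and I would fix a canonical choice of orientations for its odd sectors (say all $+$). Then I would create $n+1$ external vertices, one per even label $2,4,\ldots,2n+2$, with the orientation of the sector labelled $2x_i$ set so as to record $\varepsilon_i$. Finally I would prescribe $\gamma$ so that, moving along the resulting boundary from the odd sector $2i-1$ at the center, one next meets the external sector carrying $x_i$ before arriving at $2i+1$, with the twisted/untwisted status of the adjoining ribbons chosen to register $\varepsilon_i$. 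By construction the fatgraph has the required center and a single $\sigma$-cycle that spells out $b_n$, so it is unicellular and hence a $\pi$-map.

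Next I would check that the equivalence class $[\mathbb{P}_{n,g}]$ so obtained depends only on $b_n$ and not on the auxiliary choices in the construction. The two ambiguities, namely the numbering of the external vertices and the local orientation of each external vertex, correspond exactly to the two non-trivial clauses in Definition \ref{D:equivalence}: the first is absorbed by the permutation $\mu$ of even labels, the second by an external flip. Consequently $[\varphi_n(b_n)]\in\mathfrak{P}_n$ is well defined.

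For the inverse I would take any representative $\mathbb{P}_{n,g}$, normalize it through the equivalence moves so that the center reads $(2n+1,2n-1,\ldots,3,1)$ with canonical odd-sector orientations, and traverse the unique boundary component starting at sector $1$. Between each consecutive pair of odd sectors $2i-1,2i+1$ along the boundary there is exactly one external vertex, whose even label yields $x_i$ and whose relative orientation with respect to the center yields $\varepsilon_i$; this recovery matches the forward construction on canonical representatives. The main obstacle will be showing that this boundary reading descends to the full equivalence class rather than depending on the chosen representative: relabelling external vertices is harmless because the inverse consults orientations rather than numerical labels, and a flip of a degree-one external vertex is trivial, but a flip of a higher-degree external vertex simultaneously reverses its internal cyclic order and toggles the twisted/untwisted status of every incident ribbon, as recorded in Section \ref{S:2b}, and I would need to argue that these two effects cancel along the boundary traversal so that the recovered signed permutation is unchanged. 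Once this invariance is established, $\varphi_n$ and its candidate inverse are mutually inverse and the proposition follows.
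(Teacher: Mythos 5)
Your proposal is structurally reasonable at the outline level (define $\varphi_n$, check well-definedness, exhibit an inverse), but the explicit construction you sketch does not work and diverges substantially from the paper's argument.

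The paper does not build $\mathbb{P}_{n,g}$ directly. It first constructs an intermediate fatgraph $\mathbb{G}_{n,g}=(H_{2n+2},\sigma,\gamma)$ with a \emph{single} vertex $\sigma=(x_{2n+2},\ldots,x_1)$, one boundary cycle $\gamma^*$ running over all odd sectors, and possibly several further $\gamma$-cycles over even sectors, and only then passes to the Poincar\'e dual $\mathbb{P}_{n,g}=\mathbb{G}_{n,g}^*$. Under dualization $\gamma^*$ becomes the center $v^*$ and the $\gamma$-cycles over even sectors become the external vertices; their lengths (and hence the external vertex degrees, and the genus) are dictated by the ribbon condition in eq.~(\ref{E:rib}), not chosen freely. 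Your construction instead allocates $n+1$ degree-one external vertices, one per even label. With $n+1$ ribbons, $n+2$ vertices and one boundary component this forces genus $0$ via Euler's formula, so it can only reproduce signed permutations of reversal distance zero (cf.~Corollary~\ref{C:trivial}). For a generic $b_n$ the external vertices must have higher degree, and nothing in your sketch determines how the even sectors cluster into vertices.

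A second difficulty is where you store the signs. You propose to normalize all $v^*$-sector orientations to $+$ and encode $\varepsilon_i$ in the orientation of the even sector $2x_i$ and the twist of the adjacent ribbons. But Definition~\ref{D:equivalence} demands \emph{the same center}: the orientations of the odd sectors at $v^*$ are part of the fixed data of the equivalence class, not a normalizable choice. The paper exploits exactly this: it records $\varepsilon_i$ in the orientation of the odd sector $x_{2i+1}$ (at position $2i+1$ along the boundary, with label $2y_i+1$), and for that reason the ``main obstacle'' you flag at the end — invariance of the boundary reading under flips of higher-degree external vertices — never arises. A flip of an external vertex only reverses and reorients its even sectors and toggles the twist of its incident ribbons; it leaves the odd sectors and their orientations untouched, so the sequence $(x_{2n+1},x_{2n-1},\ldots,x_3)$ that the paper reads off is automatically independent of the representative. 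By contrast, since your encoding lives in the even sectors, flipping genuinely does disturb it, and you would have to prove a cancellation that your normalization of the center has in fact made false. In short: the paper's detour through $\mathbb{G}_{n,g}$ and its dual is not cosmetic; it packages the external vertex structure, determines the genus, and places the sign data in the one part of the $\pi$-map that the equivalence relation freezes.
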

%%%%%
%%%%%%%%%%%%%%%%%%%%%%%%%%%%%%%%%%%%%%%%%%%%%%%%%%%%%%%%%%%%%%%%%%%%%%%%%%%%%%%%%%%%%%%%%%%%
%%%%%
\begin{proof}
Given a signed permutation, $b_n=(\varepsilon_1y_1,\dots, \varepsilon_n y_n)$,
we associate to $\varepsilon_i y_i$ the sector $x_{2i+1}$, where $\lambda_{x_{2i+1}}=
2y_i+1$, and $\omega_{x_{2i+1}}=+$ if $\varepsilon_i=1$ and $-$, otherwise.
Let $x_1=(1,+)$ be an additional, $+$-sector, and
let
$$
\gamma^*=(z_{2n+1},z_{2n-1},\dots,z_5,z_3,z_1=x_1),
$$
with $z_{2k+1}=(2k+1,\omega_{z_{2k+1}})$.

%%%%%
%%%%%%%%%%%%%%%%%%%%%%%%%%%%%%%%%%%%%%%%%%%%%%%%%%%%%%%%%%%%%%%%%%%%%%%%%%%%%%%%%%%%%%%%%%%%%%
%%%%%
{\it Claim.} Given $b_n$ and $x_{2k+1}$ constructed as above, there exists a set of fatgraphs
             $\mathbb{G}_{n,g}$ such that \\
             (a) $\gamma^*$ is a $\gamma$-cycle in $\mathbb{G}_{n,g}$ and
             $\sigma=(x_{2n+2},x_{2n+1},\dots,x_4,x_3,x_2,x_1)$, \\
             (b) any two of these, $\mathbb{G}_{n,g}$ and $\mathbb{G}_{n,g}'$ differ by choosing
             a labeling of the even sectors and an orientation of each boundary component
             over even sectors.
%%%%%
%%%%%%%%%%%%%%%%%%%%%%%%%%%%%%%%%%%%%%%%%%%%%%%%%%%%%%%%%%%%%%%%%%%%%%%%%%%%%%%%%%%%%%%%%%%%%%
%%%%%

We shall interpret $\gamma^*=(z_{2n+1},z_{2n-1},\dots,z_5,z_3,z_1=x_1)$ as a boundary component
of length $n$ that traverses all sectors $z_{2n+1},\ldots,z_1$.

We make the Ansatz
$$
\sigma=(x_{2n+2},x_{2n+1},\dots,x_4,x_3,x_2,x_1),
$$
where $\lambda_{x_{2i}}= 2\mu(i)$ and $\mu\in S_n$, i.e.~the even sectors are arbitrarily
labeled with even numbers. Note that at this point the orientations of the even sectors
are not determined, yet. Furthermore, $\gamma^*$ can be expressed in terms
of the odd sectors $x_{2k-1}$. Then we have $\gamma^*=(x_{2\tau(i)-1})$, where
$x_{2\tau(i)-1}=z_{2i-1}$.

We proceed by producing the orientations of the even sectors $x_{2i}$ as well as the boundary
component $\gamma$, containing the cycle $\gamma^*$. These are constructed using the fact
that we have to generate ribbons.
This defines $\gamma$ via $\gamma(x_{2i})=x_{2j}$ as follows:
\begin{equation}\label{E:rib}
\diagram
x_{2i+1} \dto^{\gamma^*}\rto^\sigma   & x_{2i} \\
x_{2j-1}                           &  x_{2j} \lto^\sigma\uline^\gamma
\enddiagram
\quad \text{\rm or }\quad
\diagram
x_{2i+1} \drto^{\gamma^*}\rto^\sigma           & x_{2i} \\
x_{2j}   \urline_\gamma                      & x_{2j+1} \lto^\sigma
\enddiagram
\end{equation}
Accordingly the ribbon structure induces $\gamma$ as a collection of boundary components over
even sectors and the unique cycle of odd sectors, $\gamma^*$. The only choice is that of
selecting the orientation of one sector, $x_{2j}^{r}$, for each respective boundary component
over even sectors, $\gamma_r$.
The given orientations of the odd sectors naturally determine whether we have an untwisted
or a twisted ribbon and eq.~(\ref{E:rib}) defines $\gamma$ on even-indexed sectors, such
that $\sigma$ and $\gamma$ produce ribbons.

This construction is unique up to choosing an orientation in each $\gamma$-cycle,
$\gamma_r$, over even sectors. Eq.~(\ref{E:rib}) shows that this induces orientations for
all sectors (and the directions of the respective ribbon sides) contained in $\gamma_r$.
Accordingly, to a signed permutation corresponds the set of fatgraphs
$$
\mathbb{G}_{n,g}=(H_{2n+2},\sigma,\gamma),
$$
such that any two of them differ by choosing a labeling of the even sectors and
an orientation of each boundary component except $\gamma^*$. This proves the Claim.

%%%
%%%%%%%%%%%%%%%%%%%%%%%%%%%%%%%%%%%%%%%%%%%%%%%%%%%%%%%%%%%%%%%%%%%%%%%%%%%%%%%%%%%%%%%%%%%%
%%%
We next consider the dual $\mathbb{P}_{n,g}=\mathbb{G}_{n,g}^*$. By construction,
in $\mathbb{P}_{n,g}$, $\gamma^*$ becomes the center
$$
v^*=(z_{2n+1},z_{2n-1},\ldots,z_5,z_3,z_1=x_1)
$$
and $\mathbb{P}_{n,g}$ is, by construction, unicellular having boundary component $\sigma$.
Note that in any $\pi$-map, $v^*$ contains the odd sectors labeled in descending order,
the only difference consists in their orientations.
Thus we have a well-defined mapping
$$
\varphi_n \colon \mathfrak{B}_n\longrightarrow \mathfrak{P}_{n,g}, \qquad
\varphi_n(b_n)=[\mathbb{P}_{n,g}].
$$
We proceed by constructing $\varphi_n^{-1}$: given an equivalence class of
$\pi$-maps $[\mathbb{P}_{n,g}]$, we choose a representant, $\mathbb{P}_{n,g}$ and
dualize. That is we choose $\mu$ and the orientations of the cycles over even sectors.
This produces the fatgraph $\mathbb{G}_{n,g}$ that has a boundary component cycle
$\gamma^*=(z_{2n+1},z_{2n-1},\ldots,z_5,z_3,z_1)$ traversing all odd sectors and the vertex
$$
\sigma=(x_{2n+2},x_{2n+1},\ldots ,x_4,x_3,x_2,x_1).
$$
To recover the signed permutation we only need partial information: the sequence
$$
(x_{2n+1},x_{2n-1},\ldots,x_5,x_3)
$$
is obviously independent of the choice of the representant. This induces an unique
signed permutation where the sign of $\varepsilon_i y_i$ equals the orientation of
the sector $x_{2i+1}$ and $y_i=(\lambda_{x_{2i+1}}-1)/2$.
\end{proof}

In Fig.~\ref{F:pi-map}, we give
an example to show how to construct a $\pi$-map from a signed permutation
and back.

%%%%%%%%%%%%
%%%%%%%%%%%%%%%%%%%%%%%%%%%%%%%%%%%%%%%%%%%%%%%%%%%%%
%%%%%%%%%%%%
\begin{figure}[t]
\begin{center}
\includegraphics[width=0.9\columnwidth]{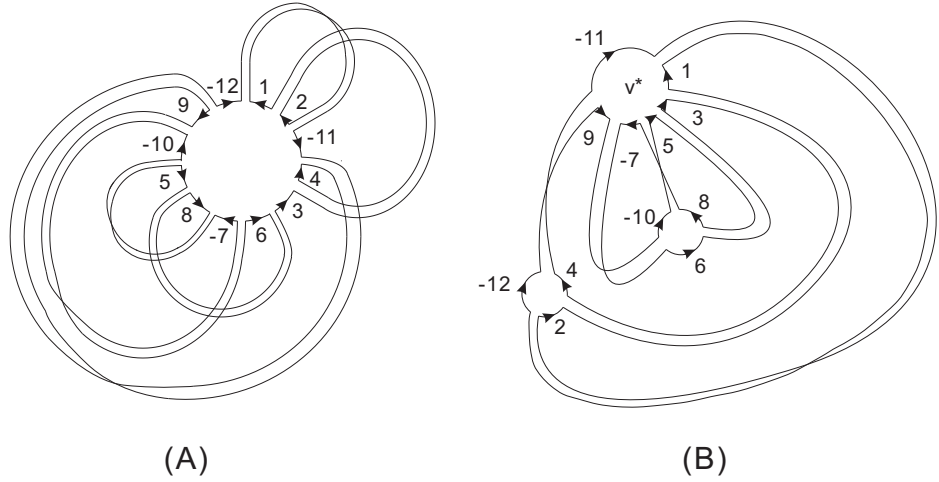}
\end{center}
\caption{\small From the
signed permutation $[-5,+1,-3,+2,+4]$ to its $\pi$-map: first we compute
$x_{11}=9$, $x_9=5$, $x_7=-7$, $x_5=3$,
$x_3=-11$ and $x_1=1$.
Then we set
$
\sigma=(x_{12}=-12, x_{11}=9,\ldots, x_3=-11, x_2=2, x_1=1),
$
where $2 \mu(i)=\lambda_{x_{2i}}$, $\mu\in S_n$ and
$x_{2i} =\sigma(x_{2i+1})$. Furthermore,
$
\gamma^*=(-11, 9, -7, 5, 3, 1).
$
This produces the fatgraph $\mathbb{G}_5$, (A).
In order to recover the signed permutation, we extract the sequence
$x_{11}=9$, $x_9=5$, $x_7=-7$, $x_5=3$, $x_3=-11$. Then the signed permutation
is $[-5,+1,-3,+2,+4]$ obtained via $y_i=(\lambda_{x_{2i+1}}-1)/2$, for
$1\le i \le 5$.
In (B) we depict the $\pi$-map of (A) after dualization.
}
\label{F:pi-map}
\end{figure}

%%%%%%%%%%%%
%%%%%%%%%%%%%%%%%%%%%%%%%%%%%%%%%%%%%%%%%%%%%%%%%%%%%
%%%%%%%%%%%%

We next collect some facts about $\pi$-maps.

%%%%
%%%%%%%%%%%%%%%%%%%%%%%%%%%%%%%%%%%%%%%%%%%%%%%%%%%%%%%%%%%%%%%%%%%%%%%%%%%%%%%%%%%%%%%%%%%%%%
%%%%
\begin{lemma}\label{L:char2}
Let $b_n$ be a signed permutation and $[\mathbb{P}_{n,g}]=\varphi_n(b_n)$,
and $\mathbb{G}_{n,g}^*=\mathbb{P}_{n,g}$. Then \\
{\rm (a)} any $\mathbb{P}_{n,g}$-ribbon is incident to $v^*$, \\
{\rm (b)} a $\mathbb{P}_{n,g}$-ribbon is a $m$-ribbon if and only if it is induced by a
          twisted $\mathbb{G}_{n,g}$-ribbon.
\end{lemma}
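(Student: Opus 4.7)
The plan is to exploit the explicit construction of $\mathbb{G}_{n,g}$ given in the proof of Proposition~\ref{P:char}. There the sector labels satisfy $\lambda_{x_{2i+1}}$ odd and $\lambda_{x_{2i}}$ even, the vertex permutation is the single cycle $\sigma=(x_{2n+2},x_{2n+1},\ldots,x_2,x_1)$, and the distinguished boundary cycle $\gamma^*$ visits precisely the odd-labeled sectors. Upon dualization, $\gamma^*$ becomes the center vertex $v^*$ of $\mathbb{P}_{n,g}$, while the $\sigma$-cycle becomes the unique boundary component of $\mathbb{P}_{n,g}$.

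For part (a), I would argue as follows. Any $\mathbb{G}_{n,g}$-ribbon involves two $\sigma$-adjacent pairs $(x,\sigma(x))$ and $(y,\sigma(y))$. Since $\sigma$ is the cycle $(x_{2n+2},\ldots,x_1)$, consecutive entries alternate between odd- and even-indexed $x_i$, and hence between odd and even labels. Thus each of $(x,\sigma(x))$ and $(y,\sigma(y))$ contains exactly one odd-labeled sector, so every $\mathbb{G}_{n,g}$-ribbon has at least two odd-labeled sectors among its four corners. The Poincar\'e dual of Lemma~\ref{L:dual}, being a ribbon-to-ribbon bijection that preserves the four participating sectors, therefore produces a $\mathbb{P}_{n,g}$-ribbon at least two of whose corners are odd-labeled. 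Since in $\mathbb{P}_{n,g}$ the odd-labeled sectors are precisely those lying on $v^*=\gamma^*$, every such ribbon is incident to $v^*$.

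For part (b), I would directly invoke the general fact recorded immediately after Lemma~\ref{L:dual}: the Poincar\'e dual maps untwisted ribbons to $b$-ribbons and twisted ribbons to $m$-ribbons. Since the dual is an involution and induces a bijection on ribbons, this correspondence restricts to a bijection between the twisted $\mathbb{G}_{n,g}$-ribbons and the $m$-ribbons of $\mathbb{P}_{n,g}$, yielding both the ``if'' and the ``only if'' directions.

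The only real subtlety I anticipate is carefully separating the parity of the index $i$ of $x_i$ from the parity of its label $\lambda_{x_i}$; once the Proposition~\ref{P:char} convention that $x_{2i+1}$ carries an odd label and $x_{2i}$ an even label is invoked, the alternation underlying (a) is automatic, and (b) then reduces to a direct appeal to the ribbon-by-ribbon dualization analysis carried out in the proof of Lemma~\ref{L:dual}.
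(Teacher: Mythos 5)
Your proposal is correct and takes essentially the same route as the paper: part (a) rests on the observation that every $\mathbb{G}_{n,g}$-ribbon has its two odd-labeled corners on the $\gamma^*$-side, which dualizes into a $\sigma$-pair belonging to $v^*$ (the paper displays this explicitly via the dualization diagram, while you phrase it as a parity count of corners, both amounting to the same structural fact), and part (b) invokes the twisted-to-$m$ / untwisted-to-$b$ ribbon correspondence established in the proof of Lemma~\ref{L:dual}, exactly as the paper does.
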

%%%%%
%%%%%%%%%%%%%%%%%%%%%%%%%%%%%%%%%%%%%%%%%%%%%%%%%%%%%%%%%%%%%%%%%%%%%%%%%%%%%%%%%%%%%%%%%%%%%%
%%%%%
\begin{proof}
(a) immediately follows, since $\mathbb{P}_{n,g}$ is the dual of $\mathbb{G}_{n,g}$
induced by $b_n$ and we have:
$$
\diagram
x_{2i-1} \dto^{\gamma^*}\rto^\sigma   & x_{2i} \\
x_{2j-1}                           &  x_{2j-2} \lto^\sigma\uline^\gamma
\enddiagram
\quad \mapsto\quad
\diagram
x_{2i-1} \dto^{\sigma}\rto^{\gamma^*}  & x_{2j-1} \\
x_{2i}                           &  x_{2j-2} \lline^\gamma\uto^\sigma
\enddiagram
$$
The case of twisted ribbons is analogous.

Ad (b): this follows from Lemma~\ref{L:dual}. Switching $\gamma$ and $\sigma$ for
twisted $\mathbb{G}_{n,g}$-ribbons (induced by the sign-change of the $\gamma^*$-sectors,
$x_{2i-1},x_{2j-1}$) means
$$
\diagram
x_{2i-1} \drline^\gamma\rto^\sigma & x_{2i} \\
x_{2j}\urline_\gamma   & x_{2j-1}\lto^\sigma
\enddiagram
\quad \mapsto\quad
\diagram
x_{2i-1} \drline^\sigma\rto^\gamma & x_{2i} \\
x_{2j}\urline_\sigma   & x_{2j-1}\lto^\gamma
\enddiagram
$$
The latter diagram is equivalent to exactly one of the following ribbons, depending on the
directions of the diagonals,
$$
\diagram
x_{2i-1} \dto^\gamma\rto^\sigma & x_{2j-1} \dto^\gamma\\
  x_{2i}       &   \lto^\sigma  x_{2j}
\enddiagram
\qquad
\diagram
x_{2i-1} \drto_\gamma\rto^\sigma & x_{2j-1} \dlto^\gamma\\
  x_{2j}        &   \lto^\sigma x_{2i}
\enddiagram
\qquad
\diagram
x_{2j-1} \dto_\gamma\rto^\sigma & x_{2i-1} \dto^\gamma\\
 x_{2j}        &   \lto^\sigma x_{2i}
\enddiagram
\qquad
\diagram
x_{2j-1} \drto_\gamma\rto^\sigma & x_{2i-1} \dlto^\gamma\\
  x_{2i}        &   \lto^\sigma x_{2j}
\enddiagram
$$
\end{proof}

%%%
%%%%%%%%%%%%%%% ORIENTATION %%%%%%%%%%%%%%%%%%%%%%%%%%%%%%%%%%%%%%%%%%%%%%%%%%%%%%%%%%%%%%
%%%

We call a $\pi$-map, $\mathbb{P}_{n,g}$, orientable if it induces an orientable surface,
$F(\mathbb{P}_{n,g})$, and non-orientable, otherwise. Note that the notion of orientability
here is different from the notion of oriented cycles in break-point graphs \cite{Pevzner:99}.
In fact, a permutation with oriented cycles in a breakpoint graph corresponds one to one
to an non-orientable $\pi$-map.

%%%
%%%%%%%%%%%%%%%%%%%%%%%%%%%%%%%%%%%%%%%%%%%%%%%%%%%%%%%%%%%%%%%%%%%%%%%%%%%%%%%%%%%%%%
%%%
\begin{lemma} \label{L:orientable}{\bf (Non-orientability)}
Let $\mathbb{P}_{n,g}$ be a reduced $\pi$-map. Then the following assertions are
equivalent:\\
{\rm (a)} $\mathbb{P}_{n,g}$ is non-orientable,\\
{\rm (b)} $\mathbb{P}_{n,g}$ contains a $m$-ribbon.\\
{\rm (c)} ${\mathbb{P}}_{n,g}$ contains an external vertex incident
          to both: a twisted as well as an untwisted ribbon.
\end{lemma}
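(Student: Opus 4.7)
I would prove the three equivalences via the cyclic chain (a)$\Rightarrow$(c)$\Rightarrow$(b)$\Rightarrow$(a). As a preliminary, Lemma~\ref{L:char2}(a) combined with a simple sector count shows that every ribbon of $\mathbb{P}_{n,g}$ is a non-loop joining $v^*$ to exactly one external vertex: the $2n+2$ sectors split as $n+1$ odd sectors at $v^*$ and $n+1$ even sectors across the external vertices, while the $n+1$ ribbons contribute $2(n+1)$ horizontal sides, so every ribbon must contribute exactly one side to $v^*$ and one to some external vertex. In particular, flipping any external vertex toggles the twist status of every ribbon incident to it. For (a)$\Rightarrow$(c) I argue by contrapositive: assume every external vertex has uniform twist; flipping each ``all-twisted'' external vertex converts its ribbons to untwisted, producing a representative of $[\mathbb{P}_{n,g}]$ in which every ribbon is untwisted, which by the standard ribbon-graph criterion makes $F(\mathbb{P}_{n,g})$ orientable --- contradicting~(a).

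\textbf{Key implications.} For (c)$\Rightarrow$(b), pick an external vertex $v$ with a twisted ribbon $R_T$ and an untwisted ribbon $R_U$. If either is an $m$-ribbon the claim is immediate; otherwise both are $b$-ribbons, and the $b$-condition imposed on every ribbon at $v$ propagates a common orientation $\omega_v$ across the sectors of $v$. Inserting $\omega_v$ into the twist formulas yields orientation $-\omega_v$ on the $v^*$-sectors of $R_T$ but $+\omega_v$ on the $v^*$-sectors of $R_U$; thus the odd sectors around $v^*$ are not all of one sign, and the cyclic orientation pattern along $v^*$ must switch sign between two adjacent odd sectors, producing an $m$-ribbon. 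For (b)$\Rightarrow$(a), the $m$-ribbon forces two adjacent odd sectors of $v^*$ to carry opposite orientations, and these orientations are invariants of $[\mathbb{P}_{n,g}]$ since $v^*$ cannot be flipped. Were $F(\mathbb{P}_{n,g})$ orientable, flipping external vertices alone would yield a representative with only untwisted ribbons (since the global flip preserves ribbon twists, any required flip of $v^*$ may always be replaced by flipping its complement). But the untwisted condition preserves sector orientation across each vertical of a ribbon, and since $\mathbb{P}_{n,g}$ is unicellular the single boundary cycle visits every sector; this would force a common orientation across \emph{all} sectors, contradicting the non-uniformity at $v^*$.

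\textbf{Main obstacle.} The delicate step is (c)$\Rightarrow$(b): one must simultaneously track the $b$-ribbon condition (equating orientations within each pair of adjacent $v$- and $v^*$-sectors) and the twist condition (relating orientations across the two verticals of a ribbon), and verify that two $b$-ribbons at $v$ of opposite twist force opposite orientations on their respective $v^*$-sides. The other implications reduce cleanly to the preliminary sector count and to the two standard fatgraph facts about untwisted ribbons.
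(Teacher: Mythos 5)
Your proof is correct, and the underlying mechanisms — flip-invariance of the $v^*$-sector orientations, the link between twist type at an external vertex and the orientations of the corresponding $v^*$-sectors, and the possibility of normalizing external vertices by flipping — are exactly the ones the paper uses. The main differences are organizational and in the level of detail. The paper first proves (a)$\Leftrightarrow$(b) and then derives (a)$\Leftrightarrow$(c), with (a)$\Rightarrow$(c) dismissed as ``by transposition''; you run a clean cyclic chain (a)$\Rightarrow$(c)$\Rightarrow$(b)$\Rightarrow$(a) and actually supply the details for the step the paper elides. Also, your (b)$\Rightarrow$(a) argument is more self-contained: instead of the paper's appeal to the classification of surfaces (``connected sum of projective planes''), you observe that after normalizing external flips (using the complement-flip trick, which you correctly justify) an all-untwisted unicellular $\pi$-map would propagate a single orientation around the $\gamma$-cycle, contradicting the mixed orientations at $v^*$ produced by the $m$-ribbon. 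This buys a purely combinatorial proof that does not lean on the topological structure theorem. One small phrasing gap worth tightening: in (c)$\Rightarrow$(b) you write that ``the $b$-condition imposed on every ribbon at $v$ propagates a common orientation $\omega_v$,'' but at that point you only know $R_T$ and $R_U$ are $b$-ribbons; you should state explicitly that you may assume \emph{all} ribbons (in particular all those at $v$) are $b$-ribbons, since otherwise an $m$-ribbon exists and (b) is immediate — the argument then goes through as you describe.
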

%%%
%%%%%%%%%%%%%%%%%%%%%%%%%%%%%%%%%%%%%%%%%%%%%%%%%%%%%%%%%%%%%%%%%%%%%%%%%%%%%%%%%%%%%%
%%%

\begin{proof}
{\rm (a)} $\Rightarrow$ (b):
suppose $\mathbb{P}_n$ contains no $m$-ribbon. Then $\mathbb{P}_n$ represents a
$2$-dimensional cell-complex with exclusively complementary edge pairs. By the
structure theorem of surfaces its underlying topological quotient space is an
orientable surface, whence (a) $\Rightarrow$ (b).

(b) $\Rightarrow$ (a): suppose $\mathbb{P}_n$ contains a $m$-ribbon, $e$.
Since any ${\mathbb{P}}_n$-ribbon is incident to $v^*$ we have a single vertex
containing two subsequent sectors with different orientations. Flipping external
vertices does not change the fact that $e$ is an $m$-ribbon and $v^*$ cannot be
flipped, by construction. This implies that the quotient space of $\mathbb{P}_n$
is a connected sum of projective planes and as such non-orientable.

(a) $\Rightarrow$ (c): follows by transposition.\\
(c) $\Rightarrow$ (a): the existence of such an external vertex implies that $v^*$
contains two subsequent sectors with different orientations and hence a $m$-ribbon.
We then employ (b) $\Rightarrow$ (a).
\end{proof}

Removing all vertices incident to exactly one ribbon and subsequent relabeling
induces a projection map $\mathbb{P}\mapsto \mathbb{P}^\rho$ from $\pi$-maps
to reduced $\pi$-maps. By Euler's characteristic equation this projection preserves
topological genus.

%%%%
%%%%%%%%%%%%%%%%%%%%%%%%%%%%%%%%%%%%%%%%%%%%%%%%%%%%%%%%%%%%%%%%%%%%%%%%%%%%%%%%%%%%%%%%
%%%%
\begin{lemma}\label{L:reduced}
Let $b_n$ be a signed permutation and $[\mathbb{P}_{n,g}]=\varphi_n(b_n)$. Then $b_n$ and
the signed permutation $\varphi^{-1}([\mathbb{P}_{n,g}^\rho])$ have equal reversal
distance.
\end{lemma}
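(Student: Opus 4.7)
The plan is to translate the removal of a degree-one vertex of $\mathbb{P}_{n,g}$ into a combinatorial contraction on $b_n$ and then to show that this contraction preserves reversal distance.

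First I would identify what a degree-one external vertex of $\mathbb{P}_{n,g}$ represents on the signed permutation side via $\varphi_n$. By Lemma~\ref{L:char2}(a) every $\mathbb{P}_{n,g}$-ribbon is incident to $v^*$, so a degree-one external vertex consists of a single even sector $x_{2i}$ joined to $v^*$ by one ribbon. Tracing through the construction in the proof of Proposition~\ref{P:char}, in particular the ribbon rule~(\ref{E:rib}), this configuration forces two consecutive odd sectors of $v^*$ to be joined to $x_{2i}$ with compatible orientations, which on the $b_n$ side manifests as a pair of successive entries $\varepsilon_r y_r,\varepsilon_{r+1}y_{r+1}$ satisfying $\varepsilon_{r+1}y_{r+1}=\varepsilon_r y_r+1$ (the sentinels $0$ and $n+1$ handling the boundary sectors $x_1$ and $x_{2n+2}$). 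Call such a pair a \emph{consecutive adjacency}. Removing the degree-one vertex, together with the associated odd sector of $v^*$ and the ribbon between them, is then precisely the operation of contracting this consecutive adjacency into a single entry and relabeling, and the resulting signed permutation is exactly $b_n':=\varphi_n^{-1}([\mathbb{P}_{n,g}^\rho])$. By induction on the number of degree-one vertices, it suffices to prove $d(b_n)=d(b_n')$ for a single contraction.

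The inequality $d(b_n)\le d(b_n')$ is straightforward: any sorting sequence for $b_n'$ lifts to a sorting sequence for $b_n$ of the same length by treating the contracted pair as a rigid two-element block and translating the indices of each reversal accordingly. For the reverse inequality I would argue by an exchange on an optimal sorting sequence $(\rho^1,\ldots,\rho^d)$ for $b_n$. Any reversal whose interval endpoint splits the consecutive pair $(\varepsilon_r y_r,\varepsilon_{r+1}y_{r+1})$ can be replaced by a reversal with the endpoint shifted past the pair. Such a replacement alters the resulting permutation only by an internal reversal of the adjacency itself, and crucially the internal reversal turns the adjacency $(\varepsilon_r y_r,\varepsilon_{r+1}y_{r+1})$ into $(-\varepsilon_{r+1}y_{r+1},-\varepsilon_r y_r)$, which by the identity $-\varepsilon_r y_r=-\varepsilon_{r+1}y_{r+1}+1$ is again a consecutive adjacency. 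Iterating the exchange through the sequence thus produces an optimal sorting of $b_n$ in which every reversal respects the pair, and such a sequence descends to a sorting of $b_n'$ of length $d$.

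The principal obstacle is in making the exchange argument rigorous, in particular controlling the cumulative discrepancy between the original and the replaced sequence. The key point is that after each exchange step the discrepancy is confined to the two positions of the consecutive adjacency and preserves the adjacency structure, so induction on the position of the first violating reversal yields a valid sorting sequence of the same length in which the adjacency is never split. Combining this with the genus-preservation of the projection $\mathbb{P}\mapsto \mathbb{P}^\rho$ noted just before the lemma closes the argument.
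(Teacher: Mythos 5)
Your identification of the degree-one external vertex with a consecutive adjacency $\varepsilon_{r+1}y_{r+1}=\varepsilon_r y_r+1$, and of the projection $\mathbb{P}\mapsto\mathbb{P}^\rho$ with the contraction of that adjacency, is exactly what the paper does; the paper then disposes of the final claim with the single sentence ``Clearly, $b_n$ and $b_{n-1}$ have the same reversal distance.'' You do not get to stop there, however, and the exchange argument you sketch to close the gap has a concrete flaw.

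The problem is the assertion that replacing a splitting reversal $\rho_{i,p}$ by the shifted reversal $\rho_{i,p+1}$ (or $\rho_{i,p-1}$) ``alters the resulting permutation only by an internal reversal of the adjacency itself,'' so that the discrepancy is ``confined to the two positions of the consecutive adjacency.'' That is false. Writing $\pi=[\pi_1,\dots,\pi_n]$ with the adjacency at positions $p,p+1$, one computes
$\pi\cdot\rho_{i,p}=[\dots,\pi_{i-1},-\pi_p,-\pi_{p-1},\dots,-\pi_i,\pi_{p+1},\dots]$
while
$\pi\cdot\rho_{i,p+1}=[\dots,\pi_{i-1},-\pi_{p+1},-\pi_p,\dots,-\pi_i,\pi_{p+2},\dots]$,
and these differ in every position of the interval $[i,p+1]$; algebraically, $\rho_{i,p}^{-1}\rho_{i,p+1}$ is a cyclic shift of that block, not a length-two reversal, so it is certainly not a reversal at all. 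Consequently, after a single exchange the state $\hat\pi_t$ differs from $\pi_t$ across the whole reversed block, and the remaining reversals $\rho^{t+1},\dots,\rho^d$ cannot simply be reindexed: there is no uniform ``discrepancy operator'' that conjugates each subsequent reversal back to a reversal. Your induction on the position of the first violating reversal therefore does not go through as stated. The fact you are trying to prove (an adjacency of length two can always be kept intact in some optimal sort) is correct and is a known result of Kececioglu and Sankoff, but its proof tracks how the split pair is eventually \emph{reassembled} later in the sort and re-routes the entire subsequence between the split and the reassembly; it is not a local perturbation argument. The direction $d(b_n)\le d(b_n')$ you give (lifting a sort of $b_n'$ by treating the contracted pair as a rigid block) is fine, including the implicit parity check that the lifted sort actually terminates at the identity.
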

%%%%%
%%%%%%%%%%%%%%%%%%%%%%%%%%%%%%%%%%%%%%%%%%%%%%%%%%%%%%%%%%%%%%%%%%%%%%%%%%%%%%%%%%%%%%%%%%%%%%
%%%%%
\begin{proof}
For an external vertex, that is incident to only one ribbon in $\mathbb{P}_n$
we have by Lemma~\ref{L:char2}, (a) the following alternative
$$
\diagram
x_{2i+1} \dto^{\sigma}\rto^{\gamma^*}  & x_{2j+1} \\
x_{2i}                           &    x_{2i}  \lline\uto^\sigma
\enddiagram
\quad \text{\rm or} \quad
\diagram
x_{2i+1} \rto^{\gamma^*}         & x_{2j+1} \dlto^\sigma\\
x_{2j}                          & x_{2j}\ulto^\sigma  \lline
\enddiagram
$$
where the orientations of $x_{2i+1},x_{2j+1}$ are equal, both being either $\oplus$
or $\ominus$.
Without loss of generality, we may assume that $x_{2i},x_{2j}$ is $\oplus$, i.e.~we have
$$
\lambda_{x_{2i+1}}+2=\lambda_{x_{2j+1}}.
$$
As a result we have, depending on the orientations of $x_{2i+1},x_{2j+1}$ being $\oplus$
or $\ominus$, either two successive numbers $(k-1),k$ or $-k,-(k-1)$, respectively, in
$\varphi_n^{-1}([\mathbb{P}_{n,g}])$.
Removing the sector $x_{2i}$ and replacing the sectors $x_{2i+1},x_{2j+1}$ by $x_{2i+1}$
is equivalent to replacing $\varphi_n^{-1}([\mathbb{P}_{n,g}])$ by the signed permutation
$b_{n-1}$, obtained by removing $k$ or $-k$, respectively and by setting $\pm j\mapsto
\pm (j-1)$ for any $j>k$, see Fig.~\ref{F:collapse}.
Clearly, $b_n$ and $b_{n-1}$ have the same reversal
distance, whence the lemma.
\end{proof}

%%%
%%%%%%%%%%%%%%%%%%%%%%%%%%%%%%%%%%%%%%%%%%%%%%%%%%%%%%%%%%%%%%%%%%%%%%%%%%%%%%%%%%%%%%%%%
%%%
\begin{corollary}\label{C:trivial}
The signed permutation corresponding to the equivalence class of $\pi$-maps having
exclusively external vertices of degree one, has reversal distance zero.
\end{corollary}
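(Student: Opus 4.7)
My plan is to proceed by induction on $n$, using Lemma~\ref{L:reduced} to successively remove one external vertex of degree one at a time while preserving the reversal distance. The base case is $n=0$, where the signed permutation is empty and the reversal distance is trivially zero.

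For the inductive step, let $\mathbb{P}_{n,g}$ with $n\ge 1$ have every external vertex of degree one, and let $b_n$ be the associated signed permutation. Pick any external vertex $v=(x_{2i})$. By the single-vertex removal argument inside the proof of Lemma~\ref{L:reduced}, deleting $v$ together with its unique incident ribbon and merging the two $v^*$-sectors $x_{2i+1},x_{2j+1}$ adjacent to that ribbon produces a new $\pi$-map $\mathbb{P}_{n-1,g'}$ whose underlying signed permutation $b_{n-1}$ has the same reversal distance as $b_n$. The crucial observation is that this reduction is entirely local to $v$ and the two $v^*$-sectors it meets: no other external vertex has its incident ribbons or cyclic structure altered. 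Hence every external vertex of $\mathbb{P}_{n-1,g'}$ is still of degree one, and the inductive hypothesis yields $d(b_{n-1})=0$. Combining with $d(b_n)=d(b_{n-1})$ gives $d(b_n)=0$, completing the induction.

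The main obstacle, slight though it is, is the verification of the locality claim: one must inspect the ribbon diagrams used in the proof of Lemma~\ref{L:reduced} to confirm that merging $x_{2i+1}$ with $x_{2j+1}$ and deleting $x_{2i}$ truly leaves the remaining external vertices, and the global ribbon structure outside the removed neighborhood, intact. Once this local invariance is established, the induction goes through immediately and the corollary follows.
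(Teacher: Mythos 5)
Your proposal is correct and is essentially the same argument the paper intends: the corollary follows directly from Lemma~\ref{L:reduced}, since reducing a $\pi$-map whose external vertices all have degree one yields the trivial $\pi$-map (no external vertices), whose associated signed permutation has reversal distance zero. Your induction simply unpacks the projection $\mathbb{P}\mapsto\mathbb{P}^\rho$ one degree-one vertex at a time, and the locality check you flag is indeed the only thing to verify; it holds because removing $x_{2i}$ and merging the two adjacent $v^*$-sectors does not alter the ribbon incidences of any other external vertex.
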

%%%
%%%%%%%%%%%%%%%%%%%%%%%%%%%%%%%%%%%%%%%%%%%%%%%%%%%%%%%%%%%%%%%%%%%%%%%%%%%%%%%%%%%%%%%%%
%%%

%%%%%%%%%%%%
%%%%%%%%%%%%%%%%%%%%%%%%%%%%%%%%%%%%%%%%%%%%%%%%%%%%%
%%%%%%%%%%%%
\begin{figure}[t]
\begin{center}
\includegraphics[width=0.7\columnwidth]{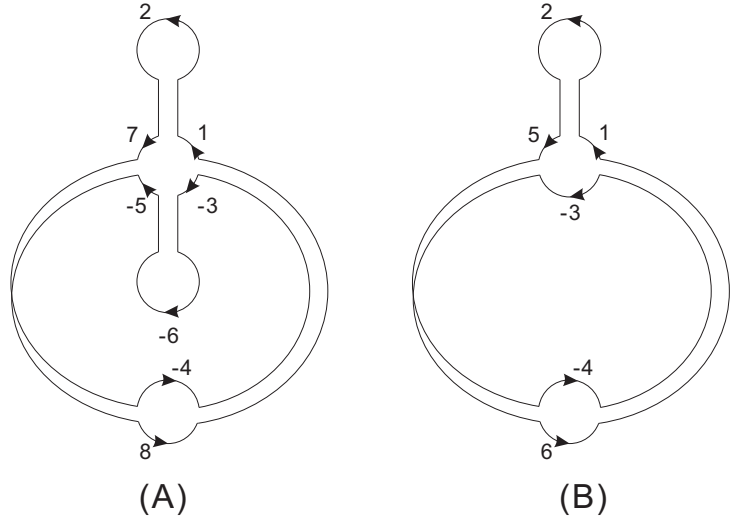}
\end{center}
\caption{\small (A) the $\pi$-map induced by the signed permutation $[-2, -1, 3]$.
(B) removing the vertex of degree one, $(-6)$, results in a $\pi$-map corresponding
to the signed permutation $[-1, 2]$. This is equivalent to replacing the segment
$[-2,-1]$ by $[-1]$. Both $[-2,-1,3]$ and $[-1,2]$ have reversal distance $1$.
}
\label{F:collapse}
\end{figure}

%%%%%%%%%%%%
%%%%%%%%%%%%%%%%%%%%%%%%%%%%%%%%%%%%%%%%%%%%%%%%%%%%%
%%%%%%%%%%%%

%%%
%%%%%%%%%%%%%%%%%%%%%%%%%%%%%%%%%%%%%%%%%%%%%%%%%%%%%%%%%%%%%%%%%%%%%%%%%%%%%%%%%%%%%%%%
%%%
\section{Irreducibility and components}\label{S:irreducible}
%%%
%%%%%%%%%%%%%%%%%%%%%%%%%%%%%%%%%%%%%%%%%%%%%%%%%%%%%%%%%%%%%%%%%%%%%%%%%%%%%%%%%%%%%%%%
%%%

Let $\mathbb{P}_{n,g}=(H_{2n+2},\sigma,\gamma)$ be a $\pi$-map with center
$v^*=(z_{2n+1},z_{2n-1},\ldots,z_1)$ where $\lambda_{z_{2k+1}}=2k+1$. By
construction, $v^*$ contains all odd $\mathbb{P}_{n,g}$-sectors.

By abuse of notation we shall identify the $v^*$-sectors with their labels,
when their respective orientation is not of relevance.
Let $i$ and $j$ be two $v^*$-sectors where $i<_\sigma j$, we set
\begin{eqnarray*}
[i,j]_\sigma & = &  \{t \mid t=2k+1, \ i \le_\sigma t \le_\sigma j\} \\
{[i,j]}_\gamma & = & \{ x \mid x=2k+1, \ i \le_\gamma x \le_\gamma j \ \vee
                                j \le_\gamma x \le_\gamma i \}.
\end{eqnarray*}
Note that in both intervals, $[a,b]_\sigma$ and $[a,b]_\gamma$, all sectors
are odd.
For any $[i,j]_\sigma$ the construction of Proposition~\ref{P:char} possibly
produces a $\pi$-map to which we refer to as $\mathbb{P}_{n,g}^{i,j}$.
An interval $[i,j]_\sigma$ inducing the $\pi$-map $\mathbb{P}_{n,g}^{i,j}$
is called minimal, if there exists no sector $i<_\sigma k<_\sigma j$ such that
both $[i,k]_\sigma$ and $[k,j]_\sigma$ induce $\pi$-maps, respectively.

Let $I_{\mathbb{P}_{n,g}}$ denote the set of intervals $[i,j]$ inducing $\pi$-maps,
$\mathbb{P}_{n,g}^{i,j}$. Then $I_{\mathbb{P}_{n,g}}$ becomes via inclusion, i.e.~
$$
[a,b]_\sigma\subset  [c,d]_\sigma \quad \Longleftrightarrow \quad
                                             c<_\sigma a <_\sigma b <_\sigma d,
$$
a partially ordered set.
By construction $[1,2n+1]_\sigma$ is the unique maximal element of
$(I_{\mathbb{P}_{n,g}},\subset)$.
%%%
%%%
%%%%%%%%%%%%%%%%%%%%%%%%%%%%%%%%%%%%%%%%%%%%%%%%%%%%%%%%%%%%%%%%%%%%%%%%%%%
%%%
\begin{definition}{\bf (Irreducibility)}
A $\pi$-map, $\mathbb{P}_{n,g}$, is called {\it irreducible} if
$I_{\mathbb{P}_{n,g}}=\{[1,2n+1]\}$.
\end{definition}
%%%
%%%%%%%%%%%%%%%%%%%%%%%%%%%%%%%%%%%%%%%%%%%%%%%%%%%%%%%%%%%%%%%%%%%%%%%%%%%
%%%

%%%
%%%%%%%%%%%%%%%%%%%%%%%%%%%%%%%%%%%%%%%%%%%%%%%%%%%%%%%%%%%%%%%%%%%%%%%%%%%
%%%
\begin{lemma}\label{L:gs}
Suppose $[i,j]_\sigma$ induces a $\pi$-map, then we have
$$
[i,j]_\sigma=[i,j]_\gamma.
$$
\end{lemma}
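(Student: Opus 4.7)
The plan is to invoke Lemma~\ref{L:char2}\,(a)---every ribbon of $\mathbb{P}_{n,g}$ is incident to the center $v^*$---so that the sub-$\pi$-map $\mathbb{P}_{n,g}^{i,j}$ can be realised as a sub-fatgraph of $\mathbb{P}_{n,g}$ whose unique boundary component is a contiguous arc of the ambient $\gamma$-cycle running from $i$ to $j$; the conclusion then falls out by reading off the odd sectors on this arc.

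First I would unpack the hypothesis. Saying that $[i,j]_\sigma$ induces a $\pi$-map $\mathbb{P}_{n,g}^{i,j}$ means that the construction of Proposition~\ref{P:char} applied to the sub-data carried by the odd sectors in $[i,j]_\sigma$ yields a $\pi$-map. By Lemma~\ref{L:char2}\,(a), every ribbon of $\mathbb{P}_{n,g}$ has one side on $v^*$ and the other on an external vertex; the ribbons of $\mathbb{P}_{n,g}^{i,j}$ are therefore precisely those whose $v^*$-side lies in $[i,j]_\sigma$. Together with the external sectors and vertices that these ribbons hit, they carve out a sub-fatgraph of $\mathbb{P}_{n,g}$ whose $\sigma$- and $\gamma$-permutations agree with the ambient ones restricted---for $\gamma$ via the natural shortcut through sectors outside the sub-structure---to the selected sectors.

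Second, I would exploit the unicellularity of $\mathbb{P}_{n,g}^{i,j}$. Were the sub-sectors not contiguous along the ambient $\gamma$-cycle, the shortcut restriction would decompose into two or more disjoint cycles, violating unicellularity. Consequently the sub-sectors form a single arc of $\gamma$ with endpoints $i$ and $j$. Reading off only the odd sectors lying on this arc yields, by the definition, exactly $[i,j]_\gamma$; but these are equally the odd sectors comprising $\mathbb{P}_{n,g}^{i,j}$, namely $[i,j]_\sigma$. Hence $[i,j]_\sigma=[i,j]_\gamma$.

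The main obstacle is rigorously identifying $\mathbb{P}_{n,g}^{i,j}$---constructed abstractly from a sub-signed-permutation via Proposition~\ref{P:char}---with the sub-fatgraph of $\mathbb{P}_{n,g}$ obtained by restriction. This identification is secured by Lemma~\ref{L:char2}\,(a): the center-incidence of every ribbon pins down the sub-ribbon structure uniquely from its intersection with $v^*$, so the abstract and the restricted constructions produce isomorphic sub-fatgraphs and the passage between them is unambiguous.
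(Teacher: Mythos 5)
Your strategy is the same as the paper's: view $\mathbb{P}^{i,j}_{n,g}$ as a sub-object of $\mathbb{P}_{n,g}$, use that every ribbon is incident to $v^*$, and appeal to unicellularity to force the odd sectors of $\mathbb{P}^{i,j}_{n,g}$ onto a single $\gamma$-arc, whence $[i,j]_\sigma=[i,j]_\gamma$. The paper states this key step with almost no justification; you try to supply one, but the argument you give does not work.

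The pivotal sentence---\emph{``Were the sub-sectors not contiguous along the ambient $\gamma$-cycle, the shortcut restriction would decompose into two or more disjoint cycles, violating unicellularity''}---is false. The shortcut restriction of a single cycle to \emph{any} nonempty subset of its elements is always again a single cycle, contiguous or not: if $\gamma=(a_1,\dots,a_m)$ and $S=\{a_{i_1},\dots,a_{i_k}\}$ with $i_1<\dots<i_k$, the shortcut restriction is just $(a_{i_1},\dots,a_{i_k})$. Restricting $(1,2,3,4,5,6)$ to $\{1,2,4,5\}$, say, yields the single cycle $(1,2,4,5)$. So unicellularity of $\mathbb{P}^{i,j}_{n,g}$ imposes nothing through this route, and you cannot conclude contiguity.

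A second, related gap: the boundary permutation of $\mathbb{P}^{i,j}_{n,g}$ is not the shortcut restriction of the ambient $\gamma$ in the first place. The centre of $\mathbb{P}^{i,j}_{n,g}$ is the shortened and re-closed cycle $(z_j,\dots,z_i)$, whose new adjacency $z_j\mapsto z_i$ carries a closing ribbon with no counterpart in $\mathbb{P}_{n,g}$; hence the sub-boundary agrees with $\gamma$ only along the shared internal ribbons and is re-routed at the two cuts near $i$ and $j$. It is precisely the interaction of those cuts with a hypothetical non-contiguous placement of the sub-sectors that would produce several boundary cycles---that is the mechanism through which unicellularity actually bites, and it is exactly what identifying the sub-boundary with a shortcut restriction erases. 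As written, the proposal therefore does not establish the contiguity on which the conclusion rests.
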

%%%
%%%%%%%%%%%%%%%%%%%%%%%%%%%%%%%%%%%%%%%%%%%%%%%%%%%%%%%%%%%%%%%%%%%%%%%%%%%
%%%
\begin{proof}
Let $t\in [i,j]_\sigma$, since $[i,j]_\sigma$ induces the $\pi$-map,
$\mathbb{P}^{i,j}_{n,g}$, $t$ is necessarily in its boundary component,
i.e.~$t\in [i,j]_\gamma$ and $[i,j]_\sigma\subseteq [i,j]_\gamma$.
$[i,j]_\gamma\subseteq [i,j]_\sigma$ follows analogously.
\end{proof}

%%%
%%%%%%%%%%%%%%%%%%%%%%%%%%%%%%%%%%%%%%%%%%%%%%%%%%%%%%%%%%%%%%%%%%%%%%%%%%%
%%%
\begin{definition}{\bf ($\mathbb{P}_n$-component)}
Let $\mathbb{P}_{n,g}$ be a $\pi$-map with minimal interval $[i,j]_\sigma$.
An interval, $[a_k, b_k]_\sigma\subsetneq [i,j]_\sigma$, is gap if it induces
a $\pi$-map and is of maximal length.
Suppose
$$
v^*=(\ldots, i,\ldots, a_k,a_ k',\ldots,b_k',b_k,\ldots,j\ldots),
$$
then the disjoint union of intervals
$$
C=[i,j]_\sigma\setminus \bigcup_k [a'_k,b'_k]_\sigma
$$
is called a $\mathbb{P}_n$-{component}.
%Let $\partial C$ be the set of boundaries of the disjoint intervals
%into which $C$ decomposes.
A component $C=[i,i+2]_\sigma$ is called trivial.
\end{definition}
%%%
%%%%%%%%%%%%%%%%%%%%%%%%%%%%%%%%%%%%%%%%%%%%%%%%%%%%%%%%%%%%%%%%%%%%%%%%%%%
%%%

We illustrate the concept of a component in Fig.~\ref{F:component}.
%%%%%%%%%%%%
%%%%%%%%%%%%%%%%%%%%%%%%%%%%%%%%%%%%%%%%%%%%%%%%%%%%%
%%%%%%%%%%%%
\begin{figure}[t]
\begin{center}
  \includegraphics[width=0.5\columnwidth]{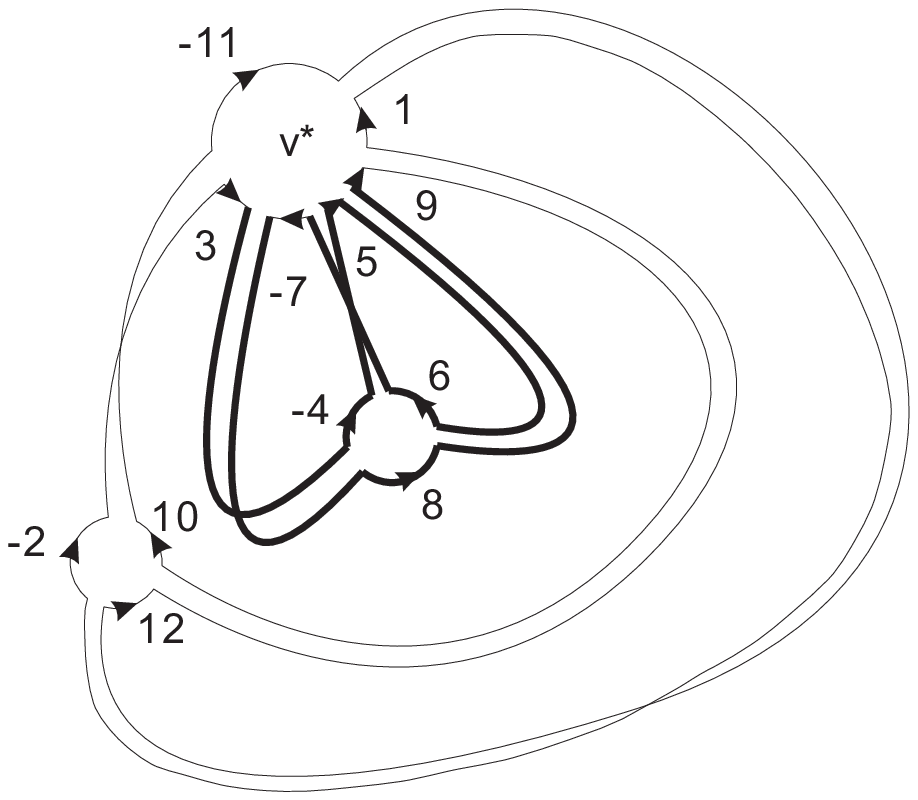}
\end{center}
\caption{\small Components:
a $\pi$-map consisting of two components $[3,9]_{\sigma}$ and
$[1,11]_{\sigma} \setminus [3,9]_{\sigma}$.
}
\label{F:component}
\end{figure}

%%%%%%%%%%%%
%%%%%%%%%%%%%%%%%%%%%%%%%%%%%%%%%%%%%%%%%%%%%%%%%%%%%
%%%%%%%%%%%%
Let $C$ be a $\mathbb{P}_{n,g}$-component. Collapsing all $C$-gaps and
subsequent relabeling of the sectors we derive:

%%%
%%%%%%%%%%%%%%%%%%%%%%%%%%%%%%%%%%%%%%%%%%%%%%%%%%%%%%%%%%%%%%%%%%%%%%%%%%%
%%%
\begin{lemma}\label{L:decomp}
Let $\mathbb{P}_{n,g}$ be a $\pi$-map, then the following assertions hold\\
{\rm (a)} any $\mathbb{P}_{n,g}$-component is isomorphic to an irreducible
$\pi$-map,\\
{\rm (b)} any $\pi$-map can be uniquely decomposed into a set of components.
\end{lemma}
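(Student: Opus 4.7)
The plan is to reduce both parts to a single contraction operation. Given a $\pi$-map $\mathbb{P}$ and a proper $\pi$-map-inducing sub-interval $[a,b]_\sigma$, I would define the \emph{contraction of the gap} $[a,b]_\sigma$ by deleting from $v^*$ the odd sectors strictly between $a$ and $b$, along with all even sectors, external vertices and ribbons supported on them, and then identifying $a$ and $b$ as consecutive in the resulting center. Lemma~\ref{L:gs} gives $[a,b]_\sigma=[a,b]_\gamma$, so the interior of the gap is also a $\gamma$-consecutive block of odd sectors; combined with Lemma~\ref{L:char2}(a)---every ribbon is incident to $v^*$---this ensures that no ribbons are severed by the deletion and that the resulting object is again a unicellular $\pi$-map, on strictly fewer odd sectors.

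For (a), I would take the component $C=[i,j]_\sigma\setminus\bigcup_k [a'_k,b'_k]_\sigma$ and successively contract each gap $[a_k,b_k]_\sigma$. Since the gaps are pairwise disjoint, the order of contractions does not matter, and the result is a $\pi$-map $\mathbb{P}^C$ whose center consists of precisely the odd sectors of $C$ in their original $\sigma$- and $\gamma$-order, with ribbon structure inherited from $\mathbb{P}$. The natural sector bijection $C\leftrightarrow\mathbb{P}^C$ is then an isomorphism in the sense of Definition~\ref{D:iso}. Irreducibility of $\mathbb{P}^C$ will be proved by contradiction: any proper $\pi$-map-inducing sub-interval $[c,d]$ of $\mathbb{P}^C$ lifts, by reinserting every contracted gap whose endpoints lie between $c$ and $d$, to a proper sub-interval of $[i,j]_\sigma$ which itself induces a $\pi$-map and strictly contains at least one of the original gaps---contradicting the maximality that defines those gaps.

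For (b), existence will follow by applying the same procedure recursively: the outer component is $[1,2n+1]_\sigma\setminus\bigcup_k [a'_k,b'_k]_\sigma$, and each gap is itself a $\pi$-map-inducing interval that decomposes further by induction on the number of odd sectors, terminating at trivial components $[i,i+2]_\sigma$. Uniqueness is forced at every level because the collection of gaps of a given $\pi$-map-inducing interval is the uniquely determined set of its maximal proper $\pi$-map-inducing sub-intervals, so no choices enter the decomposition. The main obstacle is to verify cleanly that contraction outputs a genuine $\pi$-map and that sub-interval structure lifts back correctly; both reduce to tracking the ribbon diagrams of Lemma~\ref{L:char2} across the self-contained block singled out by Lemma~\ref{L:gs}, after which the remaining arguments are routine poset-inductions on $(I_{\mathbb{P}_{n,g}},\subset)$.
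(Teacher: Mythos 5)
The paper does not actually give a proof of this lemma; it just precedes the statement with the remark ``Collapsing all $C$-gaps and subsequent relabeling of the sectors we derive:'', and your proposal is a fleshed-out version of exactly that sketch. The approach is therefore the right one, but the way you describe the collapse operation hides a genuine problem. You delete only the sectors strictly between $a$ and $b$ and then make $a$ and $b$ $\sigma$-consecutive in the new center while ``inheriting'' the ribbon structure. But $\mathbb{P}$ has no ribbon at the pair $(a,b)$, so to produce a fatgraph at all you would have to introduce a new cap ribbon (and a new degree-one external vertex) there. That cap makes the trivial interval $[a,b]_\sigma$ a $\pi$-map-inducing subinterval of $\mathbb{P}^C$, so $\mathbb{P}^C$ would fail to be irreducible. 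Your own irreducibility argument then does not close the case: lifting the trivial interval $[a,b]$ reinserts exactly the contracted gap and yields the gap itself, which does \emph{not} strictly contain a gap, so the maximality of gaps is not contradicted.

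The fix, which the paper implicitly uses in the analogous step of Lemma~\ref{L:reduced} (``replacing the sectors $x_{2i+1},x_{2j+1}$ by $x_{2i+1}$''), is to \emph{merge} $a$ and $b$ into a single odd sector $\hat a$ rather than keeping them as two adjacent ones: then the two surviving ribbons $(\sigma^{-1}(a),a)$ and $(b,\sigma(b))$ become the two ribbons of $\hat a$, no cap is needed, and a proper $\pi$-map-inducing $[c,d]_\sigma$ in $\mathbb{P}^C$ containing some merged sector $\hat a_k$ lifts to an interval that strictly contains $[a_k,b_k]_\sigma$, giving the contradiction you want. With this correction the rest of your argument goes through. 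Two smaller points worth tightening: Lemma~\ref{L:gs} only tells you that the odd sectors of the gap form a $\gamma$-block; you should also note that no external vertex incident to a gap-interior ribbon can also be incident to a ribbon outside the gap (otherwise the deletion would sever a ribbon pair), which is what really justifies that the contracted object is a unicellular fatgraph. And in part~(b) you treat $[1,2n+1]_\sigma$ as if it were automatically minimal; if it is not, you must first split it at intermediate sectors $k$ with $[1,k]_\sigma,[k,2n+1]_\sigma$ both $\pi$-map-inducing, and only then pass to gaps and recurse.
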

%%%
%%%%%%%%%%%%%%%%%%%%%%%%%%%%%%%%%%%%%%%%%%%%%%%%%%%%%%%%%%%%%%%%%%%%%%%%%%%
%%%

A component is orientable if it is nontrivial and its associated, irreducible
$\pi$-map is orientable, i.e.~by Lemma~\ref{L:orientable}, it contains only
$b$-ribbons.

As for the relation between two distinct component $C_1,C_2$ we observe
%%%
%%%%%%%%%%%%%%%%%%%%%%%%%%%%%%%%%%%%%%%%%%%%%%%%%%%%%%%%%%%%%%%%%%%%%%%%%%%
%%%
\begin{lemma}\label{L:location}
Let $C_1 = \bigcup_j^n [c_j^{(1)}, d_j^{(1)}]_\sigma$ and $C_2 =
\bigcup_j^m [c_i^{(2)}, d_i^{(2)}]_\sigma$
be two components. Then
$$
\exists \ 1\le j< n-1; \  d_j^{(1)} \le_\sigma c_1^{(2)}
\quad \Longrightarrow \quad  C_2\subset [d_j^{(1)},c_{j+1}^{(1)}]_\sigma.
$$
That is, two components are either subsequent around $v^*$ or one is
contained in a gap of the other.
\end{lemma}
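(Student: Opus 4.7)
The plan is to combine the disjointness of components from Lemma~\ref{L:decomp}(b) with the maximality built into the notion of a gap of $C_1$, and to derive a contradiction via a laminarity property of $\pi$-map-inducing intervals.

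First I would replace the given index by the largest $j$ satisfying $d_j^{(1)}\le_\sigma c_1^{(2)}$; the hypothesis guarantees at least one such index exists. By Lemma~\ref{L:decomp}(b) the component decomposition is unique, hence $C_1\cap C_2=\emptyset$, so $c_1^{(2)}\ne d_j^{(1)}$. Moreover, if $c_1^{(2)}\ge_\sigma c_{j+1}^{(1)}$ then $c_1^{(2)}$ would either lie in the piece $[c_{j+1}^{(1)},d_{j+1}^{(1)}]_\sigma\subset C_1$ (violating disjointness) or satisfy $c_1^{(2)}\ge_\sigma d_{j+1}^{(1)}$ (contradicting the maximality of $j$). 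Hence $c_1^{(2)}$ lies strictly in the interior of the gap $G:=[d_j^{(1)},c_{j+1}^{(1)}]_\sigma$ of $C_1$, and by the very definition of a gap, $G$ induces a $\pi$-map and is a $\pi$-map-inducing subinterval of the span of $C_1$ of maximal length.

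Next I would assume for contradiction that $C_2\not\subset G$. Since $c_1^{(2)}$ lies inside $G$, this forces $d_m^{(2)}>_\sigma c_{j+1}^{(1)}$. By the definition of component together with Lemma~\ref{L:decomp}(a), the span $S:=[c_1^{(2)},d_m^{(2)}]_\sigma$ of $C_2$ induces a $\pi$-map. The central combinatorial step is then a laminarity claim: because $G$ and $S$ overlap on at least the sector $c_1^{(2)}$, the $\sigma$-union $[d_j^{(1)},d_m^{(2)}]_\sigma$ is itself $\pi$-map-inducing. I would verify this through Lemma~\ref{L:gs}: both $G$ and $S$ satisfy $[\cdot,\cdot]_\sigma=[\cdot,\cdot]_\gamma$, and the shared sector $c_1^{(2)}$ forces the $\gamma$-trajectory between $d_j^{(1)}$ and $d_m^{(2)}$ to remain inside $G\cup S$, yielding $[d_j^{(1)},d_m^{(2)}]_\sigma=[d_j^{(1)},d_m^{(2)}]_\gamma$.

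Finally I would split on whether the union fits inside the span of $C_1$. If $d_m^{(2)}\le_\sigma d_n^{(1)}$, the $\pi$-map-inducing interval $[d_j^{(1)},d_m^{(2)}]_\sigma$ is a subinterval of the span of $C_1$ strictly larger than $G$, contradicting the maximal length of $G$. If instead $d_m^{(2)}>_\sigma d_n^{(1)}$, applying the same laminarity argument to $\mathrm{span}(C_1)$ and $\mathrm{span}(C_2)$, which overlap on $[c_1^{(2)},d_n^{(1)}]_\sigma$, produces a $\pi$-map-inducing interval $[c_1^{(1)},d_m^{(2)}]_\sigma$ properly containing $\mathrm{span}(C_1)$; this forces $C_1$ to be engulfed by a strictly larger component, contradicting the uniqueness of the decomposition from Lemma~\ref{L:decomp}(b). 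The main obstacle is establishing the laminarity sublemma cleanly via Lemma~\ref{L:gs}; once it is in hand, the maximality of gaps and the uniqueness of the component decomposition force the desired inclusion.
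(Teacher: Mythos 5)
Your proposal aims at the right structural fact but routes through an unjustified and unnecessarily elaborate lemma. The paper's own proof is very short: it simply notes that the gap $G=[d_j^{(1)},c_{j+1}^{(1)}]_\sigma$ is by definition a maximal $\pi$-map-inducing interval, that $c_1^{(2)}\in G$, and concludes $C_2\subset G$ directly. What is left implicit in the paper --- and what you should be making explicit --- is that a component cannot ``leak out'' of a $\pi$-map-inducing interval it touches. The clean way to see this is: by Lemma~\ref{L:gs}, $[G]_\sigma=[G]_\gamma$, so every ribbon with one endpoint in $G$ has both endpoints in $G$; therefore no vertex inside $G$ is $\sigma$-crossing with any vertex outside $G$; and since a component is by Lemma~\ref{L:equivalence} a $\sigma$-crossing-connected set of external vertices, $C_2$ meeting $G$ at $c_1^{(2)}$ forces $C_2\subset G$.

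Your version instead hinges on a ``laminarity'' claim asserting that if two $\pi$-map-inducing intervals properly overlap, their $\sigma$-union is again $\pi$-map-inducing. You never prove this; the one-line appeal to Lemma~\ref{L:gs} does not suffice (one has to track how the $\gamma$-trajectory through the overlap region interleaves with the two intervals, and whether the union is still a single $\gamma$-arc). The correct statement in this family is actually the opposite-flavored one --- $\pi$-map-inducing intervals form a laminar family, so the overlap configuration cannot occur at all --- which would also short-circuit your whole case analysis. Your Case 2 then has a second gap: from ``there is a $\pi$-map-inducing interval $[c_1^{(1)},d_m^{(2)}]_\sigma$ properly containing $\mathrm{span}(C_1)$'' you conclude a contradiction with Lemma~\ref{L:decomp}(b), but no contradiction arises: nested $\pi$-map-inducing intervals coexist by design (that is exactly the poset $(I_{\mathbb P_n},\subset)$, and minimality of $\mathrm{span}(C_1)$ only forbids a splitting of $\mathrm{span}(C_1)$ into two such intervals, not containment in a bigger one). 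Replace the union sublemma and Case 2 with the $\sigma$-crossing/$\gamma$-closure argument above and the proof becomes both correct and closer to what the paper intends.
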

%%%
%%%%%%%%%%%%%%%%%%%%%%%%%%%%%%%%%%%%%%%%%%%%%%%%%%%%%%%%%%%%%%%%%%%%%%%%%%%
%%%
\begin{proof}
By construction, the intervals of two components only intersect on their boundaries.
Suppose $d_j^{(1)} \le_\sigma c_1^{(2)}$ for some $1\le j< n-1$.
By assumption $[d_j^{(1)},c_{j+1}^{(1)}]_\sigma$ is a gap and as such it induces a
maximal $\pi$-map. Since $c_1^{(2)}\in [d_j^{(1)},c_{j+1}^{(1)}]_\sigma$ this implies
$C_2\subset [d_j^{(1)},c_{j+1}^{(1)}]_\sigma$.
\end{proof}

By Lemma~\ref{L:char2} any $\mathbb{P}_{n,g}$-ribbon, $t$, is incident to $v^*$
i.e.~it is determined by its pair of incident, odd sectors $(t,\sigma(t))$.

%%%
%%%%%%%%%%%%%%%%%%%%%%%%%%%%%%%%%%%%%%%%%%%%%%%%%%%%%%%%%%%%%%%%%%%%%%%%%%%
%%%
\begin{definition}{\bf ($\sigma$-crossing)}
Let $v_1,v_2$ be two external vertices and let
$$
[v]=\{r\mid \text{\rm $r$ is a ribbon that is incident to $v$}\}.
$$
Then $v_1,v_2$ are $\sigma$-crossing
if and only if there exist four ribbons $r_1,r_3\in [v_1]$, $r_2,r_4\in [v_2]$
such that
$$
t_{r_1}<_\sigma t_{r_2} \le_\sigma t_{r_3} <_\sigma t_{r_4}.
$$
\end{definition}
%%%
%%%%%%%%%%%%%%%%%%%%%%%%%%%%%%%%%%%%%%%%%%%%%%%%%%%%%%%%%%%%%%%%%%%%%%%%%%%
%%%
Let $(\bullet)$ be the following property:
for any for two external vertices $v_1,v_2$ there exists a sequence
$(v_1=w_1,w_2,\ldots,w_{k-1},w_k=v_2)$ such that $w_i,w_{i+1}$ are
$\sigma$-crossing, see Fig.~\ref{F:scross}.

%%%%%%%%%%%%
%%%%%%%%%%%%%%%%%%%%%%%%%%%%%%%%%%%%%%%%%%%%%%%%%%%%%
%%%%%%%%%%%%
\begin{figure}[t]
\begin{center}
  \includegraphics[width=0.5\columnwidth]{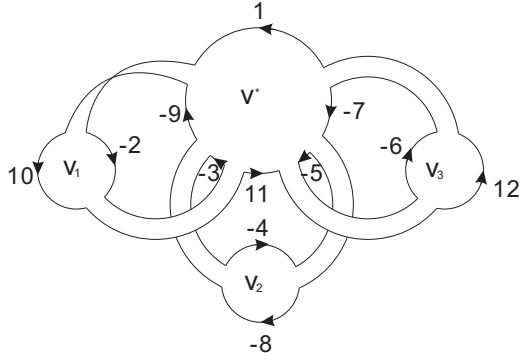}
\end{center}
\caption{\small $\sigma$-crossing: $v_1$ and $v_2$ are $\sigma$ crossing since
$-9<_\sigma -3\le_\sigma -3 <_\sigma 11$, where ribbon $(-9, -3)\in [v_2]$ and ribbon
$(-3, 11)\in [v_1]$.
$v_1$ and $v_3$ are not $\sigma$-crossing.
}
\label{F:scross}
\end{figure}

%%%%%%%%%%%%
%%%%%%%%%%%%%%%%%%%%%%%%%%%%%%%%%%%%%%%%%%%%%%%%%%%%%
%%%%%%%%%%%%

%%%
%%%%%%%%%%%%%%%%%%%%%%%%%%%%%%%%%%%%%%%%%%%%%%%%%%%%%%%%%%%%%%%%%%%%%%%%%%%
%%%
\begin{lemma}\label{L:equivalence}
Let $\mathbb{P}_{n,g}$ be a $\pi$-map, then the following assertions are equivalent;\\
{\rm (a)} $\mathbb{P}_{n,g}$ is irreducible,\\
{\rm (b)} $\mathbb{P}_{n,g}$ satisfies $(\bullet)$.
\end{lemma}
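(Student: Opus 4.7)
I would prove both directions by contrapositive.

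\textbf{(b) $\Rightarrow$ (a).} Assume $\mathbb{P}_{n,g}$ is reducible, so there is a proper interval $[i,j]_\sigma \subsetneq [1,2n+1]_\sigma$ inducing a sub-$\pi$-map. Lemma~\ref{L:gs} gives $[i,j]_\sigma = [i,j]_\gamma$, so the odd sectors of $v^*$ decompose into two $\sigma$-arcs that are simultaneously two $\gamma$-arcs of $v^*$. Since the two $v^*$-endpoints of any ribbon are $\gamma$-consecutive, every external vertex has all its ribbon-tops in a single one of these arcs, yielding a partition of the external vertices into nonempty sets $V_1, V_2$. For $v_1 \in V_1$ and $v_2 \in V_2$, the quadruple $t_{r_1} <_\sigma t_{r_2} \le_\sigma t_{r_3} <_\sigma t_{r_4}$ required by $\sigma$-crossing cannot occur, because $t_{r_1}, t_{r_3}$ and $t_{r_2}, t_{r_4}$ lie in complementary $\sigma$-arcs and thus cannot interleave. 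Hence no $V_1$-vertex is $\sigma$-crossing with a $V_2$-vertex, so no sequence of type $(\bullet)$ can join a $V_1$-vertex to a $V_2$-vertex, violating $(\bullet)$.

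\textbf{(a) $\Rightarrow$ (b).} Assume $(\bullet)$ fails and construct a proper inducing arc. Fix a connected component $V_1$ of the $\sigma$-crossing graph on external vertices with complement $V_2$, both nonempty. View each ribbon as a chord on the cyclic order of odd sectors determined by $\sigma$; by hypothesis, no $V_1$-chord interleaves with any $V_2$-chord. Order chords by nesting of $\sigma$-arcs and refine a candidate chord $c$ by iteratively replacing it with any chord whose $\sigma$-arc is strictly contained in that of $c$, until one reaches a chord $c^*$ whose $\sigma$-arc $[i,j]_\sigma$ contains no other chord-endpoint. Because $c^*$'s two $v^*$-endpoints are $\gamma$-consecutive, $[i,j]_\sigma = [i,j]_\gamma$ is automatic, and the construction of Proposition~\ref{P:char} restricted to this arc produces a sub-$\pi$-map $\mathbb{P}^{i,j}_{n,g}$. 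The arc is proper since the opposite class contributes at least one external vertex not represented in the sub-map, contradicting irreducibility.

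\textbf{Main obstacle.} The crux is (a) $\Rightarrow$ (b). The delicate step is showing the refinement procedure terminates at a truly innermost chord: within a single component $V_i$ chords may cross one another, so one cannot blindly replace a chord by any other chord whose endpoints happen to lie inside; only chords whose $\sigma$-arc is strictly nested qualify. The key leverage is that no opposite-class chord can be nested strictly inside a same-class chord's arc without producing a forbidden inter-class $\sigma$-crossing, which permits the refinement to descend through same-class nests until the candidate's arc contains no other chord-endpoint. Once an innermost chord is in hand, the $\gamma$-closure of its arc follows from the $\gamma$-consecutive structure of ribbon endpoints, and re-running the construction of Proposition~\ref{P:char} on the restricted data completes the argument.
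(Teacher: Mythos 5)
Your (b) $\Rightarrow$ (a) direction is in essence the paper's: a proper inducing interval $[i,j]_\sigma$, together with Lemma~\ref{L:gs}'s identity $[i,j]_\sigma=[i,j]_\gamma$, partitions the external vertices into two classes whose ribbon positions cannot interleave, so $(\bullet)$ fails. (One small imprecision: the two $v^*$-sectors of a ribbon are $\sigma$-consecutive in the center cycle, not $\gamma$-consecutive; the fact that each external vertex lands entirely in one arc comes from the $\gamma$-closure of the sub-$\pi$-map, not from a $\gamma$-adjacency of ribbon corners.)

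The (a) $\Rightarrow$ (b) direction has a genuine gap, and the chord picture is the source. By Lemma~\ref{L:char2} every ribbon meets $v^*$ in the pair $(t,\sigma(t))$, two \emph{consecutive} sectors of the $v^*$-cycle; there are $n+1$ such slots and $n+1$ ribbons, so each ribbon occupies exactly one slot and spans only a trivial $\sigma$-interval of size two. Consequently the nesting partial order on ribbon-chords you set up is degenerate and your refinement has nothing to descend through. The nontrivial chord structure of ribbons lives in the boundary order $\gamma$ (the intervals $I_t$ of the appendix), not in $\sigma$; and the $\sigma$-crossing in $(\bullet)$ is a relation on \emph{external vertices}, detected by interleaving of their ribbon-slot sets, not a relation on individual ribbons. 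Even if one reinterprets ``chord'' generously (as a vertex span or a $\gamma$-interval), the argument still does not close: the refinement rule (replace a chord by one strictly nested inside it) terminates when no strictly nested chord exists, which is weaker than the termination state you assert (``arc contains no other chord-endpoint''), because two chords can overlap without nesting. And the ``key leverage'' you invoke — that an opposite-class chord strictly nested inside a same-class chord's arc would force an inter-class $\sigma$-crossing — is false: nesting is exactly the configuration that does \emph{not} produce an interleaving quadruple $t_{r_1}<_\sigma t_{r_2}\le_\sigma t_{r_3}<_\sigma t_{r_4}$. The paper's proof avoids all this by working with the $\sigma$-span $[\min S(v),\max S(v)]_\sigma$ of the set $S(v)$ of odd sectors belonging to ribbons incident to $v$ or to vertices $\sigma$-crossing $v$; a $\subset$-minimal such span is a genuine $\sigma$-interval, induces a sub-$\pi$-map via Lemma~\ref{L:gs}, and irreducibility then forces it to equal $[1,2n+1]_\sigma$.
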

%%%
%%%%%%%%%%%%%%%%%%%%%%%%%%%%%%%%%%%%%%%%%%%%%%%%%%%%%%%%%%%%%%%%%%%%%%%%%%%
%%%
\begin{proof}
${\rm (a)} \Rightarrow {\rm (b)}$: for an arbitrary external vertex, $v$, let
$S(v)$ be the set of $v^*$-sectors associated to ribbons that are either
incident to $v$ or incident to vertices crossing $v$. The sets $S(v)$ are
partially ordered via
$$
S(v)\le S(v')\quad  \Longleftrightarrow \quad
{[\min S(v),\max S(v)]_\sigma\subset [\min S(v'),\max S(v')]_\sigma} .
$$
Let $S(v_0)$ be a $\le$-minimal element. Then $S(v_0)$ is an interval $[i,k]_\sigma$
such that any external vertex incident to a ribbon attached to $[i,k]_\sigma$
has all its incident ribbons attached to $[i,k]_\sigma$. This means $[i,k]_\sigma$
forms an induced $\pi$-map. Since $\mathbb{P}_{n,g}$ is irreducible we have
$[i,k]_\sigma=[1,2n+1]_\sigma$, whence for any two external vertices $(\bullet)$
holds.

${\rm (b)} \Rightarrow {\rm (a)}$: suppose $\mathbb{P}_{n,g}$ is not irreducible.
Then there exists some interval $[i,k]_\sigma\neq [1,2n+1]_\sigma$ such that
$\mathbb{P}_{n,g}^{i,k}$ is a $\pi$-map.
Let $v_1$ be an external vertex that is incident to a ribbon attached to $[i,k]_\sigma$
and let $v_2$ be an external vertex, incident to a ribbon not attached to $[i,k]_\sigma$.

By Lemma~\ref{L:gs}, since $\mathbb{P}_n^{i,k}$ is a $\pi$-map, we have
$[i,j]_\sigma=[i,j]_\gamma$ and any vertex incident to a $[i,k]_\sigma$-ribbon
has all its incident ribbons attached to $[i,k]_\sigma$.
Consequently, for $v_1$ and $v_2$ a sequence $(v_1=w_1,w_2,\ldots,w_{k-1},w_k=v_2)$
of mutually crossing vertices does not exist. This means that $(\bullet)$ implies
that $\mathbb{P}_{n,g}$ is irreducible.
\end{proof}

%%%
%%%%%%%%%%%%%%%%%%%%%%%%%%%%%%%%%%%%%%%%%%%%%%%%%%%%%%%%%%%%%%%%%%%%%%%%%%%%%%%%%%%%%%%%
%%%
\section{Reversals}\label{S:reversal}
%%%
%%%%%%%%%%%%%%%%%%%%%%%%%%%%%%%%%%%%%%%%%%%%%%%%%%%%%%%%%%%%%%%%%%%%%%%%%%%%%%%%%%%%%%%%
%%%

In this section we study the action of reversals on $\pi$-maps. Suppose the signed permutation
$$
b_n=(\epsilon_1 y_1, \ldots, \epsilon_1 y_i, \ldots, \epsilon_j y_j,\ldots,\epsilon_n y_n),
$$
is acted upon by the reversal $\rho_{i,j}$. The action produces the signed permutation
$$
b_n\cdot \rho_{i,j} =\overline{b}_n = (\epsilon_1 y_1, \ldots, -\epsilon_j y_j, \ldots,
-\epsilon_i y_i,\ldots,\epsilon_n y_n).
$$
$b_n$ and $\overline{b}_n$ induce by Proposition~\ref{P:char} the equivalence classes
$[\mathbb{P}_n]$ and $[\overline{\mathbb{P}}_n]$. That is we have the diagram
\begin{equation}\label{E:must}
\diagram
b_n  \dto^{\rho_{i,j}}\rrto^{\varphi_n}  & & [\mathbb{P}_n] \dto^{\rho_{i,j}}\\
\overline{b}_n \rrto^{\varphi_n}       & & [\overline{\mathbb{P}}_n]
\enddiagram
\end{equation}
Accordingly, we have a natural reversal action on equivalence classes of $\pi$-maps induced by
the reversal-right multiplication in the group of signed permutations and making the
above diagram commutative.

In order to describe the action of reversals on $[\mathbb{P}_n]$ combinatorially,
we reconsider the relation between sectors and ribbons in fatgraphs. On the one
hand a ribbon is a diagram
$$
\diagram
x \dline^\gamma\rto^\sigma & \sigma(x) \\
\sigma(y)   & y\lto^\sigma\uline^\gamma
\enddiagram
\quad \text{\rm or }\quad
\diagram
x \drline^\gamma\rto^\sigma & \sigma(x) \\
\sigma(y)\urline_\gamma   & y\lto^\sigma
\enddiagram
$$
where the directions of the verticals labeled by $\gamma$ are implied by the orientations
of $x,\sigma(x), y,\sigma(y)$ determined by these four sectors. On the other hand, a sector
$\sigma(x)$ is determined by the pair of its incident ribbons
$$
\diagram
x \dline^\gamma\rto^\sigma & \sigma(x) \\
\sigma^2(y)   & \sigma(y) \lto^\sigma\uline^\gamma
\enddiagram
\quad
\diagram
\sigma(x) \dline^\gamma\rto^\sigma & \sigma^2(x) \\
\sigma(y)   & y\lto^\sigma\uline^\gamma
\enddiagram
$$
depicted here, w.l.o.g.~as being untwisted. Furthermore, any even vertex can
be described by the sequence of its incident ribbons
$(e_1, \ldots, e_k)$, such that $e_i=(\sigma^{-1}(t),t)$ and
$e_{i+1}=(t, \sigma(t))$, for $1\le i\le k$.

Let $\mathbb{P}_n$ be a $\pi$-map with boundary component
$$
\gamma=(x_{2n+2},\ldots,x_{2j+2},x_{2j+1},\ldots,x_{2i+1},x_{2i},x_{2i-1},\ldots,x_1).
$$

{\bf Scenario $1$ (gluing):}
suppose the sectors $x_{2j+2}$ and $x_{2i}$ are located at the two distinct vertices
$v_1$ and $v_2$.
Without changing the equivalence
class of $\mathbb{P}_n$, we can, (by means of flipping $v_2$ if necessary, which has no
effect on the $v^*$-sectors)
assume that $v_1$ and $v_2$ are given by
\begin{equation}\label{E:ccc}
v_1=(\underbrace{(\lambda_{x_{2j+2}}, +)}_{x_{2j+2}},h_1^{(1)}, \ldots, h_{m_1}^{(1)}), \quad
\quad  v_2=(\underbrace{(\lambda_{x_{2i}}, -)}_{x_{2i}}, h_1^{(2)}, \ldots, h_{m_2}^{(2)}).
\end{equation}
We represent $v_1$ and $v_2$ by the sequences of ribbons
\begin{eqnarray*}
v_1 & = &  (\underbrace{(h_{m_1}^{(1)}, x_{2j+2})}_{e_1^{(1)}},
       \underbrace{(x_{2j+2}, h_1^{(1)})}_{e_2^{(1)}}, e_3^{(1)},\ldots, e_{m_1}^{(1)}), \\
v_2 & = & (\underbrace{(h_{m_2}^{(2)}, x_{2i})}_{e_1^{(2)}},
       \underbrace{(x_{2i}, h_{1}^{(2)})}_{e_2^{(2)}},e_3^{(2)}, \ldots, e_{m_2}^{(2)}).
\end{eqnarray*}
Next we set
$$
\tilde{v} =
(\underbrace{(h_{m_1}^{(1)}, x_{2j+2})}_{e_1^{(1)}}, \underbrace{(x_{2i}, h_{1}^{(2)})}_{e_2^{(2)}},
              e_3^{(2)},\ldots, e_{m_2}^{(2)}, \underbrace{(h_{m_2}^{(2)}, x_{2i})}_{e_1^{(2)}},
              \underbrace{(x_{2j+2}, h_1^{(1)})}_{e_2^{(1)}}, e_3^{(1)}, \ldots, e_{m_1}^{(1)})
$$
and obtain by replacing $v_1$ and $v_2$ by $\tilde{v}$ the fatgraph $\tilde{\mathbb{P}}_n$, see
Fig.~\ref{F:reversal_glue}.

%%%%%%%%%%%%
%%%%%%%%%%%%%%%%%%%%%%%%%%%%%%%%%%%%%%%%%%%%%%%%%%%%%
%%%%%%%%%%%%
\begin{figure}[t]
\begin{center}
  \includegraphics[width=0.8\columnwidth]{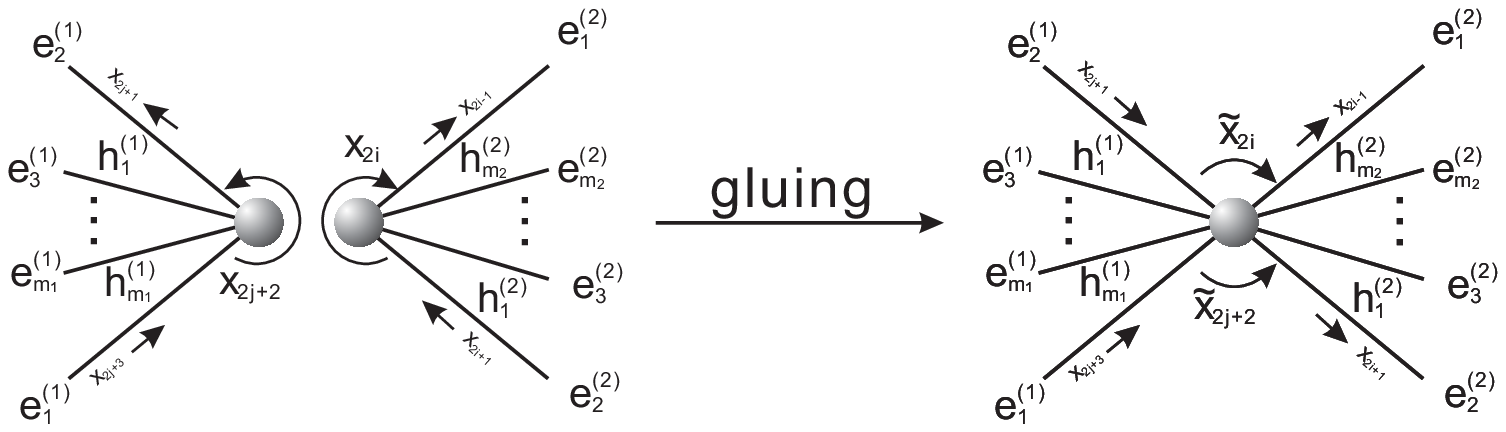}
\end{center}
\caption{\small Gluing.
}
\label{F:reversal_glue}
\end{figure}

%%%%%%%%%%%%
%%%%%%%%%%%%%%%%%%%%%%%%%%%%%%%%%%%%%%%%%%%%%%%%%%%%%
%%%%%%%%%%%%

{\bf Scenario $2$ (slicing):} suppose $x_{2j+2}$ and $x_{2i}$ are located at $v$ and that
$x_{2j+2}$ and $x_{2i}$ have different orientations.
Without changing the equivalence class of $\mathbb{P}_n$
we may assume that
$$
v=(\underbrace{(\lambda_{x_{2j+2}},+)}_{x_{2j+2}}, h_1^{(2)}, \ldots, h_{m_2}^{(2)},
   \underbrace{(\lambda_{x_{2i}},-)}_{x_{2i}}, h_1^{(1)}, \ldots, h_{m_1}^{(1)}).
$$
We next express $v$ as the sequence of ribbons
$$
v=(\underbrace{(h_{m_1}^{(1)}, x_{2j+2})}_{e_1^{(1)}},
\underbrace{(x_{2j+2}, h_1^{(2)})}_{e_2^{(2)}}, e_3^{(2)}, \ldots, e_{m_2}^{(2)},
\underbrace{(h_{m_2}^{(2)}, x_{2i})}_{e_1^{(2)}},
\underbrace{(x_{2i}, h_1^{(1)})}_{e_2^{(1)}},
 e_3^{(1)}, \ldots, e_{m_1}^{(1)}).
$$
Let
$$
\tilde{v}_1=(\underbrace{(h_{m_1}^{(1)}, x_{2j+2})}_{e_1^{(1)}},
\underbrace{(x_{2i}, h_1^{(1)})}_{e_2^{(1)}}, e_3^{(1)}, \ldots, e_{m_1}^{(1)}),  \quad
\tilde{v}_2=(\underbrace{(h_{m_2}^{(2)}, x_{2i})}_{e_1^{(2)}},
\underbrace{(x_{2j+2}, h_1^{(2)})}_{e_2^{(2)}}, e_3^{(2)}, \ldots, e_{m_2}^{(2)}).
$$
Replacing $v$ by $\tilde{v}_1$ and $\tilde{v}_2$ in $\mathbb{P}_n$, we obtain a fatgraph
$\tilde{\mathbb{P}}_n$ having the same ribbons as $\mathbb{P}_n$, see
Fig.~\ref{F:reversal_slice}.

%%%%%%%%%%%%
%%%%%%%%%%%%%%%%%%%%%%%%%%%%%%%%%%%%%%%%%%%%%%%%%%%%%
%%%%%%%%%%%%
\begin{figure}[t]
\begin{center}
  \includegraphics[width=0.8\columnwidth]{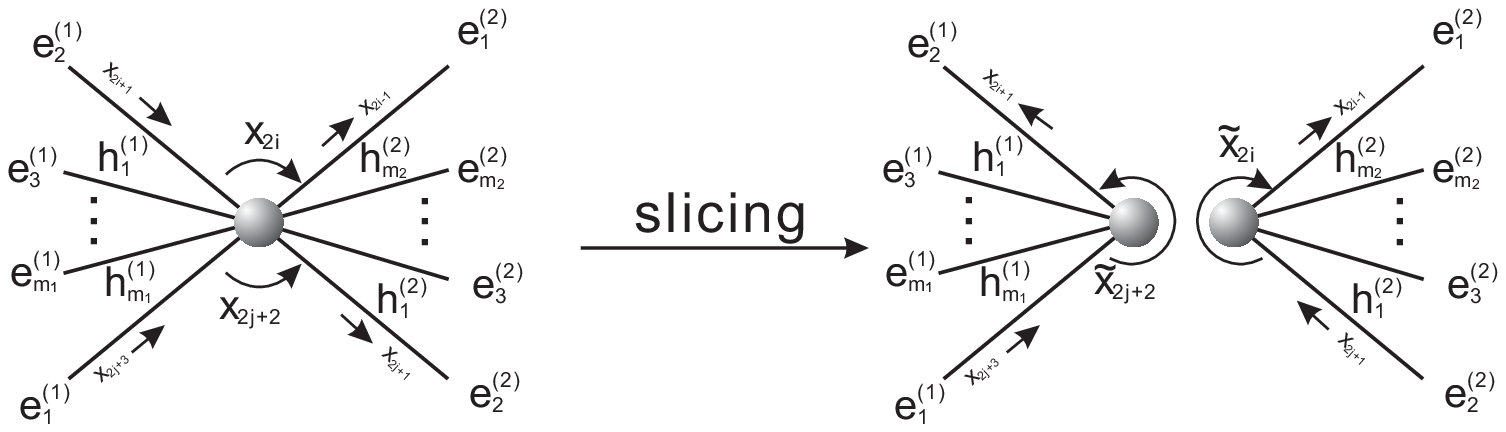}
\end{center}
\caption{\small
Slicing}
\label{F:reversal_slice}
\end{figure}

%%%%%%%%%%%%
%%%%%%%%%%%%%%%%%%%%%%%%%%%%%%%%%%%%%%%%%%%%%%%%%%%%%
%%%%%%%%%%%%

{\bf Scenario $3$ (half-flipping):}
suppose $x_{2j+2}$ and $x_{2i}$ are located at $v$ and suppose furthermore that
$x_{2j+2}$ and $x_{2i}$ have the same orientation, i.e.,
$$
v=(\underbrace{(\lambda_{x_{2j+2}},+)}_{x_{2j+2}}, h_1^{(1)}, \ldots, h_{m_1}^{(1)},
\underbrace{(\lambda_{x_{2i}},+)}_{x_{2i}}, h_1^{(2)}, \ldots, h_{m_2}^{(2)}).
$$
We represent $v$ as
$$
v=(\underbrace{(h_{m_2}^{(2)}, x_{2j+2})}_{e_1^{(1)}},
\underbrace{(x_{2j+2},  h_1^{(1)})}_{e_2^{(1)}}, e_3^{(1)}, \ldots, e_{m_1}^{(1)},
\underbrace{(h_{m_1}^{(1)}, x_{2i})}_{e_1^{(2)}},
\underbrace{(x_{2i},  h_1^{(2)})}_{e_2^{(2)}}, e_3^{(2)}, \ldots,
e_{m_2}^{(2)}).
$$
Next we set
$$
\tilde{v}=(\underbrace{(h_{m_2}^{(2)}, x_{2j+2})}_{e_1^{(1)}},
\underbrace{(x_{2i}, h_{m_1}^{(1)})}_{\overline{e}_1^{(2)}},
\overline{e}_{m_1}^{(1)}, \ldots, \overline{e}_3^{(1)},
\underbrace{(h_1^{(1)}, x_{2j+2})}_{\overline{e}_2^{(1)}},
\underbrace{(x_{2i}, h_1^{(2)})}_{e_2^{(2)}},
e_3^{(2)}, \ldots, e_{m_2}^{(2)}).
$$
Replacing $v$ by $\tilde{v}$ we obtain the fatgraph $\tilde{\mathbb{P}}_n$, see
Fig.~\ref{F:reversal_flip}.

%%%%%%%%%%%%
%%%%%%%%%%%%%%%%%%%%%%%%%%%%%%%%%%%%%%%%%%%%%%%%%%%%%
%%%%%%%%%%%%
\begin{figure}[t]
\begin{center}
  \includegraphics[width=0.8\columnwidth]{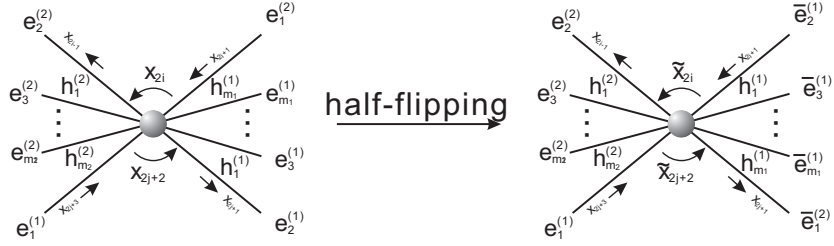}
\end{center}
\caption{\small Half-flipping
}
\label{F:reversal_flip}
\end{figure}

%%%%%%%%%%%%
%%%%%%%%%%%%%%%%%%%%%%%%%%%%%%%%%%%%%%%%%%%%%%%%%%%%%
%%%%%%%%%%%%

%%%
%%%%%%%%%%%%%%%%%%%%%%%%%%%%%%%%%%%%%%%%%%%%%%%%%%%%%%%%%%%%%%%%%%%%%%%%%%%%%%%%%%%%%%%%%%%%
%%%
\begin{lemma}\label{L:model}
Let $\mathbb{P}_n$ be a $\pi$-map satisfying eq.~(\ref{E:ccc}) and having the boundary
component
$$
\gamma=(x_{2n+2},\ldots,x_{2j+2},x_{2j+1},\ldots,x_{2i+1},x_{2i},x_{2i-1},\ldots,x_1).
$$
Then $\tilde{\mathbb{P}}_n$ is a $\pi$-map having the boundary component
$$
\tilde{\gamma}= ({x}_{2n+2}, \ldots, \tilde{x}_{2j+2},
-{x}_{2i+1},\ldots,-{x}_{2i+2},\ldots, -x_{2j+1},\tilde{x}_{2i},\ldots,{x}_1),
$$
where $-x_k=(\lambda_{x_k},-\omega_{x_k})$ denotes the sector $x_k$ having reversed orientation.
\end{lemma}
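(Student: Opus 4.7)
The strategy is to trace the new boundary cycle $\tilde{\gamma}$ of $\tilde{\mathbb{P}}_n$ directly from its modified vertex structure. The gluing alters $\sigma$ only on sectors lying in $v_1\cup v_2$: the center $v^*$ and every other external vertex is untouched. Since $\gamma$ is determined locally by the ribbon structure, the portions of $\gamma$ whose incident ribbons lie entirely outside $v_1\cup v_2$ transfer verbatim to $\tilde{\gamma}$. This immediately yields the prefix $(x_{2n+2},\ldots,\tilde{x}_{2j+2})$ and the suffix $(\tilde{x}_{2i},\ldots,x_1)$ of $\tilde{\gamma}$, where the tildes at the two endpoints reflect a possible orientation adjustment inherited from the flip of $v_2$ used to achieve the normalization of eq.~(\ref{E:ccc}).

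The main step is to identify the middle piece of $\tilde{\gamma}$ running from $\tilde{x}_{2j+2}$ to $\tilde{x}_{2i}$. By construction, in $\tilde{v}$ the $v_2$-block of sides is inserted immediately after the side of $v_1$ ending at $x_{2j+2}$; hence the boundary walk leaving $\tilde{x}_{2j+2}$ crosses the newly attached ribbon and lands in $v^*$ at the odd sector adjacent to $x_{2i}$, namely $x_{2i+1}$. Walking around $v^*$ in the opposite cyclic direction from $\gamma$, the boundary then visits the odd sectors in the order $x_{2i+1},x_{2i+2},\ldots,x_{2j+1}$ (with the intermediate even sectors at the external vertices between them) and finally exits at $\tilde{x}_{2i}$. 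This reversed traversal gives the claimed middle piece of $\tilde{\gamma}$.

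The principal technical hurdle, which I expect to be the main obstacle, is showing that every sector in this reversed middle segment picks up the orientation flip $\omega\mapsto -\omega$. Here the normalization $(+,-)$ of eq.~(\ref{E:ccc}) enters crucially: the opposing orientations of $x_{2j+2}$ and $x_{2i}$ force the ribbons of $\tilde{v}$ bordering the reversed segment to swap twist type compared with those in $\mathbb{P}_n$. By Lemma~\ref{L:char2}(b) this twist-swap propagates along the middle segment as we cross each successive ribbon, flipping the orientation of every odd $v^*$-sector encountered, and by the ribbon diagrams of eq.~(\ref{E:rib}) also of each intervening even sector. Finally, $v^*$ and the total sector count are unchanged, and the trace above visits all $2n+2$ sectors and closes into a single cycle, so $\tilde{\mathbb{P}}_n$ is indeed a unicellular $\pi$-map with the asserted boundary component.
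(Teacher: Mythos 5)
Your high-level strategy — decompose $\tilde\gamma$ into an unchanged prefix, a reversed middle segment and an unchanged suffix, then trace the boundary walk — is the same as the paper's, and your identification of the prefix and suffix is correct. But there are two genuine gaps.

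First, you address only the gluing scenario. The paper's proof of this lemma covers all three constructions (gluing, slicing and half-flipping), exhibiting for each one the specific pair of ribbons that bounds the new sector $\tilde x_{2j+2}$ (and likewise for $\tilde x_{2i}$). The lemma is then invoked in the proof of Lemma~\ref{L:3} for all three cases, so the slicing and half-flipping cases are not optional. (Also a small misconception: in gluing no ``newly attached ribbon'' appears — the ribbon set of $\tilde{\mathbb{P}}_n$ equals that of $\mathbb{P}_n$; only the cyclic arrangement of the sides around the merged vertex changes, which re-routes the boundary walk.)

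Second, the step you yourself flag as the main hurdle — why every sector in the middle segment picks up a reversed orientation — is argued by the wrong mechanism. You claim that the $(+,-)$ normalization of eq.~(\ref{E:ccc}) forces a ``twist-swap'' that ``propagates along the middle segment'' and you cite Lemma~\ref{L:char2}(b). But the ribbons in the interior of the middle segment are incident to external vertices that are untouched by the construction, so their twist type does not change; and Lemma~\ref{L:char2}(b) concerns the correspondence between $m$-ribbons of $\mathbb{P}$ and twisted ribbons of $\mathbb{G}$ — it says nothing about orientation propagation along a boundary walk. The actual reason is much more direct and is what the paper uses: once the $\tilde{\mathbb{P}}_n$-boundary walk leaves $\tilde x_{2j+2}$, it re-enters the stretch of $\gamma$ between $x_{2i+1}$ and $x_{2j+1}$ but traverses the same ribbon sides in the opposite direction, and traversing a side in the opposite direction is precisely what reverses the orientation of each sector encountered, giving $-x_{2i+1}, -x_{2i+2},\ldots,-x_{2j+1}$. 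No appeal to twist types or to Lemma~\ref{L:char2} is needed there.
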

%%%
%%%%%%%%%%%%%%%%%%%%%%%%%%%%%%%%%%%%%%%%%%%%%%%%%%%%%%%%%%%%%%%%%%%%%%%%%%%%%%%%%%%%%%%%%%%%
%%%
\begin{proof}
By construction $\tilde{\mathbb{P}}_n$ is a fatgraph having a center vertex $\tilde{v}^*$.
As for $\tilde{\mathbb{P}}_n$-boundary components, we consider the sector $x_{2n+2}$.
The $\tilde{\mathbb{P}}_n$-boundary component starting at $x_{2n+2}$ visits
the same sectors and ribbons as in $\mathbb{P}_n$ before arriving at $\tilde{x}_{2j+2}$.
$\tilde{x}_{2j+2}$ is the sector formed by the pair of ribbons
\begin{eqnarray*}
{(h_{m_1}^{(1)}, x_{2j+2})}={e_1^{(1)}} &&
{(x_{2i}, h_{1}^{(2)})}={e_2^{(2)}} \quad \text{\rm (gluing)} \\
{(h_{m_1}^{(1)}, x_{2j+2})}={e_1^{(1)}} &&
{(x_{2i}, h_1^{(1)})}={e_2^{(1)}} \quad \text{\rm (slicing)} \\
{(h_{m_2}^{(2)}, x_{2j+2})}={e_1^{(1)}} &&
{(x_{2i}, h_{m_1}^{(1)})}={\overline{e}_1^{(2)}}\quad \text{\rm (half-flipping)}
\end{eqnarray*}
In all three cases, the next sector of the tour is traversing $x_{2i+1}$ in reverse
orientation, i.e.~we have $-{x}_{2i+1}=(\lambda_{x_{2i+1}},-\omega_{x_{2i+1}})$.
We continue now traversing the $\tilde{\mathbb{P}}_n$-boundary
component as the $\mathbb{P}_n$-boundary component, $\gamma$, in reverse
order via $-{x}_{2i+2},\ldots,-x_{2j+1}$. We then arrive at the new sector
$\tilde{x}_{2i}$, given by the pair of ribbons
\begin{eqnarray*}
{(h_{m_2}^{(2)}, x_{2i})}={e_1^{(2)}} & &
{(x_{2j+2}, h_1^{(1)})}={e_2^{(1)}} \quad \text{\rm (gluing)}\\
{(x_{2i}, h_1^{(1)})}={e_2^{(1)}} &&  {(x_{2j+2}, h_1^{(2)})}={e_2^{(2)}}
\quad \text{\rm (slicing)}\\
{(h_1^{(1)}, x_{2j+2})}={\overline{e}_2^{(1)}} & &
{(x_{2i}, h_1^{(2)})}={e_2^{(2)}} \quad \text{\rm (half-flipping)}
\end{eqnarray*}
The next sectors traversed by the $\tilde{\mathbb{P}}_n$-boundary component
are
$$
{x}_{2i-1}=x_{2i-1},x_{2i-2},\ldots,x_1
$$
and thus traversed as in the $\mathbb{P}_n$-boundary component $\gamma$.
Accordingly, $\tilde{\mathbb{P}}_n$ has the unique boundary component
$$
\tilde{\gamma}= ({x}_{2n+2}, \ldots, \tilde{x}_{2j+2}, -{x}_{2i+1}, \ldots, -x_{2i+2},
-{x}_{2j+1}, \tilde{x}_{2i}, \ldots, {x}_1)
$$
and the lemma follows.
\end{proof}

Inspecting the effect of Lemma~\ref{L:model} on the underlying signed permutation,
we shall categorize the action of reversals as to either glue, splice or
half-flip.

%%%
%%%%%%%%%%%%%%%%%%%%%%%%%%%%%%%%%%%%%%%%%%%%%%%%%%%%%%%%%%%%%%%%%%%%%%%%%%%%%%%%%%%%%%%%%%%%
%%%
\begin{lemma}\label{L:3}
Let $\rho_{i,j}$ be a reversal acting on the $\pi$-map $[\mathbb{P}_{n,g}]$
and let $b_n=\varphi_n^{-1}([\mathbb{P}_{n,g}])$. Then
\begin{equation}
[\mathbb{P}_{n,g}]\cdot \rho_{i,j} = [\tilde{\mathbb{P}}_{n,g'}].
\end{equation}
That is $\tilde{\mathbb{P}}_{n,g'}\sim \overline{\mathbb{P}}_{n,g'}$, where
$\tilde{\mathbb{P}}_{n,g'}$ is obtained by either gluing, splicing or half-flipping.
Furthermore we have $\mid g-g'\mid\le 1$ and
$$
d(b_n)\ge g.
$$
\end{lemma}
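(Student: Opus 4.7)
My plan is to handle the lemma in three stages: reduce the right action of $\rho_{i,j}$ on the equivalence class to one of the three surgeries (gluing, slicing, half-flipping); identify the resulting $\pi$-map with $\varphi_n(\overline{b}_n)$ at the level of equivalence classes; and derive both the Lipschitz genus estimate and the distance bound from Euler's formula.

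I would first fix a representative $\mathbb{P}_{n,g}$ of $[\mathbb{P}_{n,g}]$ and locate the two even sectors $x_{2j+2}$ and $x_{2i}$ that bound the reversed segment along the boundary component $\gamma$. Definition~\ref{D:equivalence} permits arbitrary flipping of external vertices and permutation of the even labels, so without leaving the equivalence class I may place $(x_{2j+2},x_{2i})$ into one of three standard configurations: they lie on distinct external vertices (Scenario~1), on a common external vertex with opposite orientations (Scenario~2), or on a common external vertex with equal orientations (Scenario~3). These three cases are exhaustive and mutually exclusive, because the relative orientation of two sectors on a single vertex is a flip-invariant.

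Next I would apply the appropriate surgery to obtain $\tilde{\mathbb{P}}_{n,g'}$. Lemma~\ref{L:model} guarantees that this is a unicellular fatgraph with boundary component
$$
\tilde{\gamma} = (x_{2n+2},\ldots,\tilde{x}_{2j+2},-x_{2i+1},-x_{2i+2},\ldots,-x_{2j+1},\tilde{x}_{2i},\ldots,x_1).
$$
The center $v^*$ still contains the odd sectors in descending label order, so $\tilde{\mathbb{P}}_{n,g'}$ is again a $\pi$-map. Reading off its signed permutation through the recipe in the proof of Proposition~\ref{P:char}, the odd sectors occupying the block from position $2i+1$ to position $2j+1$ in $\tilde{\gamma}$ are precisely $-x_{2j+1},-x_{2j-1},\ldots,-x_{2i+1}$; hence the block of entries from index $i$ to index $j$ in the recovered signed permutation is reversed with flipped signs, matching $\overline{b}_n = b_n\cdot\rho_{i,j}$. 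This gives $[\tilde{\mathbb{P}}_{n,g'}]=\varphi_n(\overline{b}_n)=[\overline{\mathbb{P}}_{n,g'}]$ and makes diagram~(\ref{E:must}) commutative.

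The genus estimate then follows from Euler's formula $2-g-b=v-e$: each surgery preserves the ribbon count and, by Lemma~\ref{L:model}, unicellularity, so $e$ and $b=1$ are fixed. Gluing fuses two vertices into one, slicing splits one into two, and half-flipping leaves the vertex count unchanged, which yields $g'-g \in \{+1,-1,0\}$ and hence $|g-g'|\le 1$. For the distance bound, a direct Euler count identifies the $\pi$-map of the identity signed permutation (one center vertex of degree $n{+}1$ joined to $n{+}1$ external degree-one vertices by $n{+}1$ ribbons) as having genus~$0$; invoking Corollary~\ref{C:trivial} together with the Lipschitz property just proved, any sequence of reversals transforming $b_n$ into the identity must have length at least $g$, giving $d(b_n)\ge g$. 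The main technical obstacle will be the identification step: Lemma~\ref{L:model} records only the new boundary component, while the equivalence relation hides additional cosmetic freedom, so one must carefully trace both the cyclic order and the orientation of every odd sector through each of the three surgeries to confirm that reading off $\tilde{\gamma}$ via $\varphi_n^{-1}$ really produces $\overline{b}_n$ rather than merely a related signed permutation.
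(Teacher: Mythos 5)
Your argument is correct and follows the paper's own proof almost step for step: you split into the three surgery scenarios according to whether the two even sectors share a vertex and what their relative orientation is, invoke Lemma~\ref{L:model} for the new boundary component $\tilde{\gamma}$, identify $\varphi_n^{-1}([\tilde{\mathbb{P}}_{n,g'}])$ with $b_n\rho_{i,j}$, and then deduce the genus bounds from Euler's relation. The one place where you do more than the paper is the identification step: the paper simply asserts via Proposition~\ref{P:char} that $\tilde{\gamma}$ corresponds to $b_n\rho_{i,j}$, whereas you actually trace the odd sectors at positions $2i+1,\ldots,2j+1$ of $\tilde{\gamma}$ and check that they appear in reversed order with negated orientations, which is the right thing to spell out and is what the paper's appeal to Proposition~\ref{P:char} is implicitly doing.
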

%%%
%%%%%%%%%%%%%%%%%%%%%%%%%%%%%%%%%%%%%%%%%%%%%%%%%%%%%%%%%%%%%%%%%%%%%%%%%%%%%%%%%%%%%%%%%%%%
%%%
\begin{proof}
The reversal $\rho_{i,j}$ determines uniquely the pair of even sectors $(x_{2i},x_{2j+2})$ in
$\mathbb{P}_{n,g}$. If the two sectors belong to two distinct $\mathbb{P}_{n,g}$-vertices, by
Proposition~\ref{P:char}, the orientations of $x_{2i},x_{2j+2}$ can be chosen as to
satisfy eq.~(\ref{E:ccc}).
Otherwise, $x_{2i},x_{2j+2}$ have either distinct or equal orientations.
This corresponds to the three scenarios: gluing, slicing and half-flipping.

By Lemma~\ref{L:model}, any of these generates the $\pi$-map $\tilde{\mathbb{P}}_{n,g'}$
having the boundary component $\tilde{\gamma}$, respectively. By Proposition~\ref{P:char}
a $\pi$-map with boundary component $\tilde{\gamma}$ induces an equivalence class
that corresponds to the signed permutation $b_n\rho_{i,j}$. Consequently we have
$$
\varphi_n^{-1}([\tilde{\mathbb{P}}_{n,g'}])=b_n\rho_{i,j}.
$$
Since $\varphi_n$ is a bijection between the set of signed permutations and equivalence
classes of $\pi$-maps we derive
$$
\tilde{\mathbb{P}}_{n,g'}\sim \overline{\mathbb{P}}_{n,g'}.
$$
Euler's characteristic equation immediately implies $\mid g-g'\mid\le 1$, whence
any reversal decreases the genus of the underlying $\pi$-map by at most one, i.e.
$$
d(b_n)\ge g.
$$
\end{proof}

%%%
%%%%%%%%%%%%%%%%%%%%%%%%%%%%%%%%%%%%%%%%%%%%%%%%%%%%%%%%%%%%%%%%%%%%%%%%%%%%%%
%%%

\section{Non-orientable components}\label{S:genus}

%%%
%%%%%%%%%%%%%%%%%%%%%%%%%%%%%%%%%%%%%%%%%%%%%%%%%%%%%%%%%%%%%%%%%%%%%%%%%%%%%%
%%%

By Lemma~\ref{L:decomp} any $\pi$-map $\mathbb{P}_{n,g}$ can be uniquely decomposed
into components, $C_i$, having genus $g_i$. Each of these is isomorphic to an
irreducible $\pi$-map and we have $\sum_i g_i=g$.

In the following we shall show that any non-orientable component, $C_i$, of
$\mathbb{P}_{n,g}$, or equivalently any non-orientable, irreducible $\pi$-map
can be spliced into $\varphi_n^{-1}([\text{\rm id}])$ using $g_i$ reversals.
This implies in particular the sharpness of the lower bound on the reversal
distance given by Lemma~\ref{L:3}.

The following result is due to \cite{Pevzner:99}. The proof given here is
based on the characterization of components via $\sigma$-crossings.

In the following we present Theorem 4 of \cite{Pevzner:99} employing the
$\pi$-map framework.
This theorem plays a key role of computing the reversal distance of signed
permutations. An irreducible, non-orientable $\pi$-map corresponds to an
oriented component in the breakpoint graph \cite{Pevzner:99}.

%%%
%%%%%%%%%%%%%%%%%%%%%%%%%%%%%%%%%%%%%%%%%%%%%%%%%%%%%%%%%%%%%%%%%%%%%%%%%%%%%%
%%%
\begin{lemma}\cite{Pevzner:99}\label{L:splicedown}
Let $\mathbb{P}_{n,g}$ be an irreducible, non-orientable $\pi$-map of genus $g$,
then its associated, signed permutation, $b_n=\varphi_n^{-1}([\mathbb{P}_{n,g}])$,
has reversal distance $g$.
\end{lemma}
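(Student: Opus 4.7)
The plan is to establish the upper bound $d(b_n) \le g$, which combined with the lower bound $d(b_n) \ge g$ from Lemma~\ref{L:3} yields the claim. I would proceed by induction on $g$, showing that an irreducible, non-orientable $\pi$-map of genus $g \ge 1$ admits a single reversal whose action produces a $\pi$-map of genus exactly $g-1$ whose components are each either trivial or irreducible and non-orientable.

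For the base case $g=1$, the $\pi$-map carries essentially one crosscap; using Lemma~\ref{L:orientable} I would locate the unique $m$-ribbon (or the unique external vertex with mixed twist type), pick the two even sectors bounding it, and verify by direct computation that the associated reversal produces a $\pi$-map whose reduction has only external vertices of degree one, hence reversal distance $0$ by Corollary~\ref{C:trivial}.

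For the inductive step, I would first locate a \emph{good reversal} as follows. By Lemma~\ref{L:orientable}, non-orientability forces the existence of an external vertex $w$ incident to both a twisted and an untwisted ribbon; equivalently, the center $v^*$ has two adjacent odd sectors of opposite orientation, bounded by a pair of even sectors $x_{2i}, x_{2j+2}$ both lying on $w$. I would apply $\rho_{i,j}$, which by Lemma~\ref{L:3} acts on $[\mathbb{P}_{n,g}]$ either as slicing (different orientations of $x_{2i}, x_{2j+2}$ on $w$) or as half-flipping (same orientations). A direct Euler characteristic computation in each scenario shows that this particular choice reduces the genus by exactly one, since the chosen sectors straddle an orientation switch at $v^*$ and their manipulation removes one cross-cap's worth of non-orientability.

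The main obstacle will be the structural claim that the resulting $\pi$-map $\tilde{\mathbb{P}}_{n,g-1}$ decomposes only into trivial or into irreducible \emph{non-orientable} components; if an orientable non-trivial component (a hurdle) could arise, induction would fail. To handle this, I would exploit the irreducibility of $\mathbb{P}_{n,g}$ through the $\sigma$-crossing characterization of Lemma~\ref{L:equivalence}: every pair of external vertices of $\mathbb{P}_{n,g}$ is linked by a $\sigma$-crossing chain, and by choosing $w$ and the sectors $x_{2i}, x_{2j+2}$ at the orientation switch of $v^*$, I would argue that each newly formed component in $\tilde{\mathbb{P}}_{n,g-1}$ inherits either an $m$-ribbon directly from $\mathbb{P}_{n,g}$ or a residual $\sigma$-crossing chain still carrying mixed-twist information, forcing non-orientability in the sense of Lemma~\ref{L:orientable}. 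Once this key lemma is in place, induction applied componentwise (the genera summing to $g-1$ by Lemma~\ref{L:decomp}) gives $d(\varphi_n^{-1}([\tilde{\mathbb{P}}_{n,g-1}])) \le g-1$, so that $d(b_n) \le g$, completing the proof.
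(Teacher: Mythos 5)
Your high-level plan — establish $d(b_n)\le g$ by repeatedly finding a reversal that drops the genus by one while keeping all resulting components non-orientable, and invoke the lower bound of Lemma~\ref{L:3} — is exactly the strategy the paper follows. But the crux of the paper's proof is precisely the point your proposal glosses over, and where your intended argument would fail.

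You propose to pick \emph{any} orientation switch on $v^*$ (equivalently, any $m$-ribbon $e$) and slice it, and then to argue via the $\sigma$-crossing characterization of Lemma~\ref{L:equivalence} that every resulting component ``inherits'' non-orientability. This is not true: slicing an arbitrary $m$-ribbon of an irreducible non-orientable $\pi$-map can produce an \emph{orientable} component, and this is exactly the obstacle the paper's proof is built to overcome. The paper's Claim~$0$ does not assert that every $m$-ribbon works; it asserts that \emph{some} $m$-ribbon works, and establishing that requires an auxiliary induction on the number of ribbons lying in orientable components after the slice. Concretely: if slicing $e$ yields an orientable component $C_1^e$, the paper locates a different ribbon $e_1$ that is a $m$-ribbon in $C$ but becomes an (untwisted) $b$-ribbon in $C_1^e$, and shows (Claims~$1$ and~$2$) that slicing $e_1$ instead yields a non-orientable component $C_*^{e_1}$ containing every ribbon not associated to $C_1^e$, strictly decreasing the ``bad'' ribbon count. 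Your proposal contains no analogue of this selection mechanism, so the induction you set up cannot be closed.

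A secondary confusion: for a $m$-ribbon the two even sectors on the external vertex necessarily have opposite orientations, so the associated reversal is always a slicing; half-flipping (same orientations) cannot arise from a $m$-ribbon. Moreover, half-flipping does \emph{not} reduce genus (cf.\ Lemma~\ref{L:twist_one}, where the genus is preserved), so the claim that ``each scenario reduces the genus by exactly one'' is false in the half-flip case. Since the induction needs genus to drop by exactly one at every step, you must restrict to slicing — which is automatic once you note the sectors come from a $m$-ribbon — but you should drop the spurious half-flip branch.
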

%%%
%%%%%%%%%%%%%%%%%%%%%%%%%%%%%%%%%%%%%%%%%%%%%%%%%%%%%%%%%%%%%%%%%%%%%%%%%%%%%%
%%%

\begin{proof}
Since $C$ is non-orientable there exists some $m$-ribbon $e$. We shall show

%%%
%%%%%%%%%%%%%%%%%%%%%%%%%%%%%%%%%%%%%%%%%%%%%%%%%%%%%%%%%%%%%%%%%%%%%%%%%%%%%%
%%%
{\it Claim $0$.}
There exists some $m$-ribbon $e$ in $C$ such that splicing $e$ decomposes $C$ into
exclusively non-orientable components. In particular, $C$ can be
successively spliced into trivial components.
%%%
%%%%%%%%%%%%%%%%%%%%%%%%%%%%%%%%%%%%%%%%%%%%%%%%%%%%%%%%%%%%%%%%%%%%%%%%%%%%%%
%%%

By Lemma~\ref{L:orientable},
there exists some $m$-ribbon, $e$ in $C$. Suppose splicing $e$ decomposes $C$
into the components $C_1^e,\ldots,C_k^e$. If $C_1^e$ is orientable, then it contains
exclusively $b$-ribbons $y_1^{e}$. Since $C_1=\{y\in C\mid y^{e}\in C_1^{e}\}$ is,
due to the presence of $e$, not a component in $C$, not all of them can be $b$-ribbons.
That is, there exists some $b$-ribbon, $e_1^e$, that was originally a $m$-ribbon in $C$.
Since $e_1^e\in C_1^e$ and $C_1^{e}$ is orientable, $e_1^e$ is untwisted. Since splicing
does not change untwisted edges into twisted ones, we can conclude that $e_1$ is
untwisted.

%%%
%%%%%%%%%%%%%%%%%%%%%%%%%%%%%%%%%%%%%%%%%%%%%%%%%%%%%%%%%%%%%%%%%%%%%%%%%%%%%%
%%%

{\it Claim $1$.} Splicing $e_1$ produces from $C$ a non-orientable
                 component $C_*^{e_1}$.
%%%
%%%%%%%%%%%%%%%%%%%%%%%%%%%%%%%%%%%%%%%%%%%%%%%%%%%%%%%%%%%%%%%%%%%%%%%%%%%%%%
%%%

By Lemma~\ref{L:orientable}, $C$ contains an external vertex, $v$, incident
to a twisted and an untwisted ribbon, respectively. We shall show that also $v^{e_1}$,
the vertex obtained by slicing $e_1$, has this property in $C_*^{e_1}$.
Since $e_1$ is untwisted, we have to assure that splicing $e_1$ does not eliminate
the only untwisted $v$-ribbon.

Clearly, if $v$ is not incident $e_1$, then $v^{e_1}$ is still incident to a twisted
and an untwisted ribbon.

Otherwise, we observe that there exists at least one additional untwisted ribbon
incident to $v$. Indeed, splicing $e$ produces by assumption the orientable
component $C_1^e$, containing $e_1$. Suppose now $e_1$ were the only untwisted
ribbon incident to $v$. Then the orientability of $C_1^e$ guarantees that (a) $v^e$
cannot be incident to any twisted ribbon, since slicing $e$ does preserve twisted
and untwisted ribbons, (b) $C_1^e$ contains only the ribbon $e_1^e$. (a) and (b)
imply that $C_1^e$ is trivial, which is impossible and Claim $1$ follows.

We next show that $C_*^{e_1}$ is quite ``large'': let $R^e$ and $R^{e_1}$ denote the
sets of ribbons derived from $C$ by splicing $e$ and $e_1$, respectively. We call
two ribbons $y^e$ and $y^{e_1}$ associated if they are induced by the same $C$-ribbon,
$y$.
$$
\diagram
     & y\dlto\drto & \\
 y^e    &    & y^{e_1}
\enddiagram
$$

%%%
%%%%%%%%%%%%%%%%%%%%%%%%%%%%%%%%%%%%%%%%%%%%%%%%%%%%%%%%%%%%%%%%%%%%%%%%%%%%%%
%%%

{\it Claim $2$.} $C_*^{e_1}$ contains all ribbons $y^{e_1}$ that are not associated to
                 $C_1^e$-ribbons.
%%%
%%%%%%%%%%%%%%%%%%%%%%%%%%%%%%%%%%%%%%%%%%%%%%%%%%%%%%%%%%%%%%%%%%%%%%%%%%%%%%
%%%

By assumption, splicing $e$ produces the components $C_1^e,\ldots,C_k^e$. Let $V$
denote the set of external $C$-vertices that are, after slicing $e$, contained
in $C_2^e,\ldots,C_k^e$.
By Lemma~\ref{L:equivalence} components are characterized by $\sigma$-crossing,
whence slicing $e_1^e\in C_1^e$ does not affect any of the components
$C_2^e,\ldots,C_k^e$.
The key point here is that w.r.t.~$\sigma$-crossing, $e_1$ and $e_1^e$ have the same
effect since they differ only by $e_1$ being a $m$-ribbon and $e_1^e$ being a
$b$-ribbon, see Fig.~\ref{F:alter}.
Therefore slicing $e_1$ in $C$ does not affect the $\sigma$-crossing property of
$V$-vertices. By construction, $e$ connects all $V$-vertices which proves that
$C_*^{e_1}$ contains all ribbons $y^{e_1}$ that are not associated to
$C_1^e$-ribbons.

%%%%%%%%%%%%
%%%%%%%%%%%%%%%%%%%%%%%%%%%%%%%%%%%%%%%%%%%%%%%%%%%%%
%%%%%%%%%%%%
\begin{figure}[t]
\begin{center}
  \includegraphics[width=0.9\columnwidth]{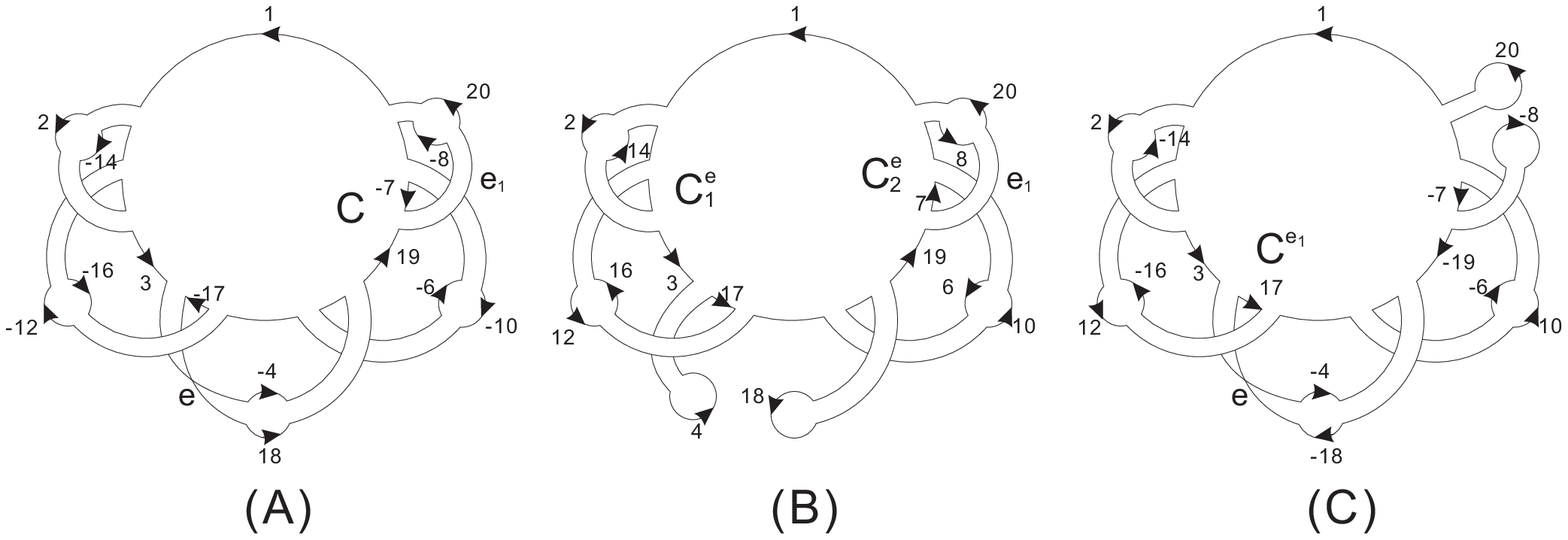}
\end{center}
\caption{\small (A) A non-orientable component, $C$, with $m$-ribbons
$e=((-3,17),(-4,18))$ and $e_1 = ((-7,19),(-8,20))$.
(B) Slicing $e$ in $C$ induces $C_1^{e}$ and $C_2^{e}$, both of which being orientable
components. The ribbon $e_1$ is contained in $C_2^{e}$.
(C) Slicing $e_1$ instead of $e$ in $C$ generates the component $C^{e_1}$. $C^{e_1}$
is non-orientable and contains all ribbons that are not associated to
$C_1^e$-ribbons.
}
\label{F:alter}
\end{figure}

%%%%%%%%%%%%
%%%%%%%%%%%%%%%%%%%%%%%%%%%%%%%%%%%%%%%%%%%%%%%%%%%%%
%%%%%%%%%%%%

Claim $1$ and Claim $2$ show that splicing $e_1$ instead of $e$ generates the
non-orientable component $C_*^{e_1}$, together with a set of ribbons, $z^{e_1}$,
that are associated to $C_1^e$-ribbons.
By assumption $C_1^e$ is an orientable component and since $e_1$ is spliced the
number of these $z^{e_1}$ is strictly smaller than the number of those contained
in $C_1^e$.
Accordingly, Claim $0$ follows by induction on the number of
ribbons contained in orientable components.

By Lemma~\ref{L:3} each such splicing reduces the genus by one eventually into a
$\pi$-map containing only external vertices of degree one. The lemma follows then from
Corollary~\ref{C:trivial}.
\end{proof}

As a result we now have

%%%%
%%%%%%%%%%%%%%%%%%%%%%%%%%%%%%%%%%%%%%%%%%%%%%%%%%%%%%%%%%%%%%%%%%%%%%%%%%%%%%%%%%%%%%
%%%%
\begin{theorem}\label{T:lower}
Let $b_n$ be a signed permutation, then we have
$$
d(b_n)\ge g,
$$
i.e.~the topological genus is a sharp bound for the reversal distance.
\end{theorem}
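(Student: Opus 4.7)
The plan is to observe that this theorem is essentially a compilation of the preceding two lemmas, so the proof amounts to assembling them. The lower bound is the algebraic consequence of Lemma~\ref{L:3}: since any reversal acts on $[\mathbb{P}_{n,g}]$ by slicing, gluing, or half-flipping a single vertex, Euler's characteristic equation forces the genus to change by at most one per reversal. The sharpness claim then needs a witness signed permutation where equality is attained, and Lemma~\ref{L:splicedown} provides exactly such a family.

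First, I would argue the inequality. Let $b_n \in \mathfrak{B}_n$ and $[\mathbb{P}_{n,g}] = \varphi_n(b_n)$. The identity permutation $\mathrm{id} \in \mathfrak{B}_n$ corresponds via $\varphi_n$ to the equivalence class of $\pi$-maps whose external vertices all have degree one (cf.~Corollary~\ref{C:trivial}), and this equivalence class has genus zero. Suppose $\rho_{i_1,j_1},\dots,\rho_{i_d,j_d}$ is a sequence of reversals with $d = d(b_n)$ that transforms $b_n$ into $\mathrm{id}$. Iterating Lemma~\ref{L:3}, the genus of the resulting $\pi$-map after applying $\rho_{i_k,j_k}$ differs from the previous genus by at most one. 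Since we must pass from genus $g$ to genus $0$, we conclude $d \ge g$.

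Second, for sharpness, I would exhibit a specific $b_n$ attaining equality. Pick any irreducible non-orientable $\pi$-map of genus $g$, for instance one obtained from the construction underlying Lemma~\ref{L:splicedown}, and let $b_n = \varphi_n^{-1}([\mathbb{P}_{n,g}])$. Lemma~\ref{L:splicedown} then yields $d(b_n) = g$, witnessing sharpness of the inequality. Combining the two parts proves the theorem.

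The main obstacle is not in the present theorem at all, but in its supporting Lemma~\ref{L:splicedown}, whose argument via the successive selection of an $m$-ribbon $e_1$ preserving non-orientability is the genuinely substantive step. Once that lemma is in hand, the theorem above is a clean summary statement and requires no further combinatorial work beyond the two observations above.
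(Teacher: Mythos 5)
Your proposal is correct and matches the paper's (implicit) argument: the paper presents Theorem~\ref{T:lower} as an immediate consequence of Lemma~\ref{L:3}, which already states $d(b_n)\ge g$, together with Lemma~\ref{L:splicedown}, which supplies the sharpness witness via irreducible non-orientable $\pi$-maps. You have correctly identified both ingredients and assembled them in the same way.
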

%%%%
%%%%%%%%%%%%%%%%%%%%%%%%%%%%%%%%%%%%%%%%%%%%%%%%%%%%%%%%%%%%%%%%%%%%%%%%%%%%%%%%%%%%%%%
%%%%

%%%
%%%%%%%%%%%%%%%%%%%%%%%%%%%%%%%%%%%%%%%%%%%%%%%%%%%%%%%%%%%%%%%%%%%%%%%%%%%
%%%
\section{Orientable components}\label{S:orientable}
%%%
%%%%%%%%%%%%%%%%%%%%%%%%%%%%%%%%%%%%%%%%%%%%%%%%%%%%%%%%%%%%%%%%%%%%%%%%%%%
%%

By Lemma~\ref{L:splicedown}, irreducible, non-orientable $\pi$-maps of genus $g$ have
reversal distance $g$. Thus it remains to analyze orientable components or equivalently,
orientable, irreducible $\pi$-maps.
In difference to non-orientable components, that could be treated individually,
orientable components acted upon by reversals have to be considered as an ensemble.
This is a result of Lemma~\ref{L:location}, i.e.~these components are either
concatenated or nested and the action of reversals affects entire chains of them.

We shall begin by showing that half-flipping transformes an orientable component into
an non-orientable component.

%%%
%%%%%%%%%%%%%%%%%%%%%%%%%%%%%%%%%%%%%%%%%%%%%%%%%%%%%%%%%%%%%%%%%%%%%%%%%%%
%%%
\begin{lemma} \label{L:twist_one}
Suppose $\mathbb{P}_{n}$ is a $\pi$-map and $C$ is a non-trivial, orientable
$\mathbb{P}_{n}$-component having genus $g$.
Let $e$ be a $b$-ribbon in $C$ having the two
even sectors, $x_{2j+2}, x_{2i}$ and let $\tilde{\mathbb{P}}_n$ be the $\pi$-map
obtained by half-flipping the vertex incident to $e$ w.r.t.~$x_{2i}$ and
$x_{2j+2}$.
Then $\tilde{C}$ is a non-orientable $\tilde{\mathbb{P}}_n$-component having genus
$g$.
\end{lemma}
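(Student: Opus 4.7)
The plan is to identify the reversal $\rho$ associated to the $b$-ribbon $e$ with the half-flipping operation of Section~\ref{S:reversal} (Scenario~3); this applies because, by definition of a $b$-ribbon, the two external corners $x_{2j+2}$ and $x_{2i}$ of $e$ lie in a common vertex $v$ and share the same orientation, so $\rho$ acts on $\mathbb{P}_n$ by half-flipping at $v$. I would then establish each of the three assertions of the conclusion (single component, genus $g$, non-orientable) in turn.

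For the genus I would apply Euler's characteristic equation. Of the three reversal scenarios, only half-flipping keeps the vertex count constant: gluing merges two vertices and slicing splits one, whereas half-flipping reshuffles the sectors of $v$ into a single vertex $\tilde v$. The ribbon count is always preserved, and Lemma~\ref{L:model} guarantees that $\tilde{\mathbb{P}}_n$ remains unicellular, so all of $v$, $e$, $b$ are unchanged, and $2-g-b=v-e$ forces the total Euler genus to be preserved. Because the half-flip is local to $v$, every other component of $\mathbb{P}_n$ survives intact in $\tilde{\mathbb{P}}_n$, so by additivity of genera across components (Lemma~\ref{L:decomp}) the component $\tilde C$ inheriting the ribbons of $C$ must have genus exactly $g$.

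For non-orientability I would exhibit an $m$-ribbon in $\tilde C$ and invoke Lemma~\ref{L:orientable}. Because $C$ is orientable, the identity $\omega_x=\omega_{\sigma(x)}$ holds throughout $C$, so all $v^*$-corners of $C$ share a common orientation. By Lemma~\ref{L:model} the boundary of $\tilde{\mathbb{P}}_n$ traverses the odd sectors $x_{2i+1},\ldots,x_{2j+1}$ with reversed orientations, so in $\tilde{\mathbb{P}}_n$ exactly these $v^*$-sectors are flipped relative to the remaining $C$-corners. Since $C$ is non-trivial with $g\ge 1$, it contains at least one ribbon beyond $e$, and therefore at least one additional $v^*$-corner of $C$; some pair of adjacent $C$-corners on $v^*$ now straddles the flipped block and carries opposite orientations, producing an $m$-ribbon that belongs to $\tilde C$.

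The main obstacle is confirming that $\tilde C$ is genuinely a single component of $\tilde{\mathbb{P}}_n$ rather than splitting apart. I would handle this via the $\sigma$-crossing characterization of irreducibility (Lemma~\ref{L:equivalence}): the local reshuffle at $v$ preserves the $\sigma$-crossings among external $C$-vertices that do not touch $v$, and the pivots $x_{2j+2}$ and $x_{2i}$ remain in $\tilde v$ so that $v$ itself stays $\sigma$-crossing-connected to every other external $C$-vertex. The delicate step is to exclude the appearance of a new proper induced $\pi$-map subinterval inside the $v^*$-trace of $\tilde C$; this is verified by tracking the $v^*$-intervals occupied by the $C$-ribbons before and after the reversal using Lemma~\ref{L:gs}, which ensures that every induced $\pi$-map is $\sigma$- and $\gamma$-closed.
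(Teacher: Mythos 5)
Your overall plan (identify $e$ with the half-flip Scenario~3, then handle genus, non-orientability, and component-hood separately) mirrors the paper's, and the genus step via Euler's formula is essentially what the paper does. The significant divergence is in the non-orientability step, and that is where your proposal has a real gap.

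The paper's argument is entirely local to the external vertex $v$: non-triviality gives $v$ at least two incident ribbons, orientability gives that all of them are untwisted, and the half-flip turns some (but not all) of the ribbons incident to $\tilde v$ into twisted ones, so $\tilde v$ is incident to both a twisted and an untwisted ribbon. Lemma~\ref{L:orientable}(c) then gives non-orientability of the component containing $\tilde v$ directly, with no mention of $v^*$-sector orientations. Your route instead passes through Lemma~\ref{L:orientable}(b), arguing for an $m$-ribbon at the center by comparing orientations of $v^*$-corners before and after the flip. That requires two claims you have not established. First, you assert that ``all $v^*$-corners of $C$ share a common orientation'' because $C$ is orientable; but $C$ is in general a disjoint union of $\sigma$-intervals separated by gaps, and the $b$-ribbon identity $\omega_x=\omega_{\sigma(x)}$ only propagates a common orientation along consecutive $v^*$-sectors that are joined by $C$-ribbons. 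Across a gap there is no $C$-ribbon linking the two boundary $v^*$-sectors, so the orientations on the two sides of a gap need not agree and your chain of equalities breaks. Second, even granting a uniform orientation, you must check that the alleged $m$-ribbon (the pair of $v^*$-consecutive sectors straddling the flipped block) is actually a ribbon of $\tilde C$ and not of a gap; you do not address this. Both issues evaporate under the paper's choice of criterion (c) of Lemma~\ref{L:orientable}, which is precisely why that lemma lists the external-vertex formulation.

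A smaller point: for the component-hood step you acknowledge a ``delicate step'' (ruling out a new induced $\pi$-map subinterval) but leave it unresolved; the paper handles this, quite briefly, by appealing to the $\sigma$-crossing characterization and the observation that the half-flip at $v$ does not alter which pairs of external vertices $\sigma$-cross. If you want to follow your route you should supply that verification explicitly, but the cleaner fix is to adopt the paper's criterion (c) for non-orientability, at which point your component and genus arguments carry through with the same brevity the paper uses.
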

%%%
%%%%%%%%%%%%%%%%%%%%%%%%%%%%%%%%%%%%%%%%%%%%%%%%%%%%%%%%%%%%%%%%%%%%%%%%%%%
%%%
\begin{proof}
Non-triviality of $C$ implies that $v$ has at least two incident ribbons and
as in Scenario $3$ we write $v$ as
\begin{eqnarray*}
v & = & (\underbrace{(\lambda_{x_{2j+2}},+)}_{x_{2j+2}}, h_1^{(1)}, \ldots, h_{m_1}^{(1)},
\underbrace{(\lambda_{x_{2i}},+)}_{x_{2i}}, h_1^{(2)}, \ldots, h_{m_2}^{(2)})\\
 & = & (\underbrace{(h_{m_2}^{(2)}, x_{2j+2})}_{e_1^{(1)}},
\underbrace{(x_{2j+2},  h_1^{(1)})}_{e_2^{(1)}}, e_3^{(1)}, \ldots, e_{m_1}^{(1)},
\underbrace{(h_{m_1}^{(1)}, x_{2i})}_{e_1^{(2)}},
\underbrace{(x_{2i},  h_1^{(2)})}_{e_2^{(2)}}, e_3^{(2)}, \ldots,
e_{m_2}^{(2)}).
\end{eqnarray*}
Since $C$ is orientable, all ribbons incident to $v$ are untwisted.
By half-flipping, $v$ becomes the vertex $\tilde{v}$,
$$
\tilde{v}=(\underbrace{(h_{m_2}^{(2)}, x_{2j+2})}_{e_1^{(1)}},
\underbrace{(x_{2i}, h_{m_1}^{(1)})}_{\overline{e}_1^{(2)}},
\overline{e}_{m_1}^{(1)}, \ldots, \overline{e}_3^{(1)},
\underbrace{(h_1^{(1)}, x_{2j+2})}_{\overline{e}_2^{(1)}},
\underbrace{(x_{2i}, h_1^{(2)})}_{e_2^{(2)}},
e_3^{(2)}, \ldots, e_{m_2}^{(2)}),
$$
where the ribbon $\overline{e}$ is obtained by twisting $e$.
By Lemma~\ref{L:equivalence}, a component is characterized by $(\bullet)$,
whence $\tilde{C}$ is a $\tilde{\mathbb{P}}_n$-component containing
the external vertex $\tilde{v}$. Furthermore $\tilde{v}$ is incident to both:
twisted and untwisted ribbons, respectively. By Lemma~\ref{L:orientable},
$\tilde{C}$ is non-orientable.
Euler's characteristic equation implies that $\tilde{C}$ has genus $g$.
\end{proof}

We now proceed by formalizing the partial order of orientable components
implied by Lemma~\ref{L:location}.
Let $\mathfrak{O}_{\mathbb{P}_n}$ denote the set of orientable components. By
Lemma~\ref{L:location} we have the partial order
$$
C \sqsubset C' \quad \Longleftrightarrow \quad  \text{\rm $C\neq C'$ and $C$ is
nested in $C'$.}
$$
We shall add to $(\mathfrak{O}_{\mathbb{P}_n},\sqsubset)$ the element $*$, which
contains any other orientable component. If we consider $(\mathfrak{O}_{\mathbb{P}_n}
\setminus \{C\},\sqsubset)$, we say $C$ is deleted from
$(\mathfrak{O}_{\mathbb{P}_n},\sqsubset)$. Let $C_1,C_2\in \mathfrak{O}_{\mathbb{P}_n}$ and
suppose $C_1,C_2\in \mathfrak{O}_{\mathbb{P}_n}$ are not nested.
Then we shall, w.l.o.g., assume
$$
\min_\sigma C_1 <_\sigma \max_\sigma C_1 <_\sigma \min_\sigma C_2.
$$
We set $C_{1,2}$ to be the smallest orientable component containing $C_1,C_2$
and
$$
[C_1,C_2]=\{ C_j \mid C_1\sqsubseteq  C_j \sqsubset C_2\}.
$$
Let $C\in \mathfrak{O}_{\mathbb{P}_n}$ such that $C_1 \sqsubset C$, $C_2 \sqsubset C$.
Then $C$ {\it separates} $C_1$ and $C_2$ if $C_1$ and $C_2$ are contained in distinct
$C$-gaps, see Fig.~\ref{F:contract} (A) and (B).
We write $C_1\lhd C_2$ if and only if
$$
{\max}_\sigma C_1 <_\sigma {\max}_\sigma C_2.
$$

%%%%%%%%%%%%
%%%%%%%%%%%%%%%%%%%%%%%%%%%%%%%%%%%%%%%%%%%%%%%%%%%%%
%%%%%%%%%%%%
\begin{figure}[t]
\begin{center}
  \includegraphics[width=0.7\columnwidth]{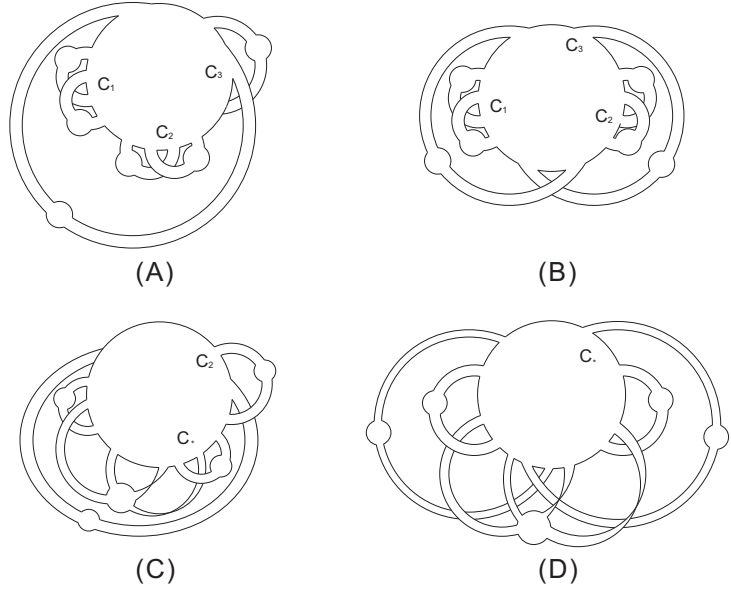}
\end{center}
\caption{\small Suppose $C_1 \lhd C_2 \lhd C_3$ and $C_1, C_2\sqsubset C_3=C_{1,2}$.
(A): $C_3$ does not separate $C_1$ and $C_2$, while in (B) $C_3$ does.
(C): gluing $C_1$ and $C_2$ in (A) does not merge $C_3$ into the
    non-orientable component $C_*$.
(D): gluing $C_1$ and $C_2$ in (B) merges $C_3$ into the non-orientable component
    $C_*$.
}
\label{F:contract}
\end{figure}

%%%%%%%%%%%%
%%%%%%%%%%%%%%%%%%%%%%%%%%%%%%%%%%%%%%%%%%%%%%%%%%%%%
%%%%%%%%%%%%

%%%
%%%%%%%%%%%%%%%%%%%%%%%%%%%%%%%%%%%%%%%%%%%%%%%%%%%%%%%%%%%%%%%%%%%%%%%%%%%%%%%%%%%%%
%%%
\begin{lemma}\label{L:contract} \cite{Pevzner:99}
Let $\mathbb{P}_n$ be a $\pi$-map with $C_1,C_2\in \mathfrak{O}_{\mathbb{P}_n}$ and
$x_{2j+2}$, $x_{2i}$ be two even sectors contained in $C_1$ and $C_2$, respectively.
Then gluing $x_{2j+2}$ and $x_{2i}$ generates a new, non-orientable component
$C_*$, obtained by merging the following set of orientable $\mathbb{P}_n$-components
\begin{equation}
M =
\begin{cases}
[C_1,C_{1,2}] \cup [C_2,C_{1,2}]  \cup \{C_{1,2}\}        & \text{\rm if $C_{1,2}$ separates
                                                      $C_1$ and $C_2$} \\
[C_1,C_{1,2}] \cup [C_2,C_{1,2}]  & \text{\rm otherwise. }
\end{cases}
\end{equation}
\end{lemma}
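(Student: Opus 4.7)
The plan is to apply Lemma~\ref{L:model} to reduce the gluing to a local modification of the $\pi$-map: the vertices $v_1 \in C_1$ and $v_2 \in C_2$ merge into a single vertex $\tilde{v}$ and a segment of the boundary component is reversed. I would then invoke Lemmas~\ref{L:orientable} and~\ref{L:equivalence} to establish the non-orientability of the new component $C_*$ and to identify the set of components it absorbs.

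For non-orientability, note that the normalization of eq.~(\ref{E:ccc}) assigns the sectors $x_{2j+2}$ and $x_{2i}$ opposite orientations, using (if necessary) a flip of $v_2$. Flipping preserves the $b$-property of all $v_2$-incident ribbons but toggles their twist; hence in the merged vertex $\tilde{v}$ the ribbons inherited from $v_1$ remain untwisted while those from the flipped $v_2$ are twisted. By Lemma~\ref{L:orientable}~(c), $\tilde{v}$ lies in a non-orientable component $C_*$.

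To identify $C_*$, I appeal to the $\sigma$-crossing characterization in Lemma~\ref{L:equivalence}: $C_*$ is the transitive $\sigma$-crossing closure of $\tilde{v}$. Writing $R(C)$ for the convex $v^*$-range spanned by a component $C$, the vertex $\tilde{v}$ carries ribbons in $R(C_1)$ and in $R(C_2)$, which immediately places $C_1 \cup C_2 \subset C_*$. For any $C \in [C_1, C_{1,2}]$ strictly nesting $C_1$, Lemma~\ref{L:location} forces $R(C)$ to contain $R(C_1)$ while lying strictly inside $R(C_{1,2})$; hence some $C$-vertex has ribbons that together with $\tilde{v}$'s arms in $R(C_1)$ and $R(C_2)$ produce the interleaved quadruple required for $\sigma$-crossing with $\tilde{v}$. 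Induction on the $\sqsubset$-chain above $C_1$ then yields $[C_1, C_{1,2}] \cup [C_2, C_{1,2}] \subseteq C_*$. If $C_{1,2}$ separates $C_1$ from $C_2$, then some $C_{1,2}$-vertex carries ribbons in both $C_{1,2}$-gaps harboring $C_1$ and $C_2$ respectively, which directly $\sigma$-crosses $\tilde{v}$; if not, each $C_{1,2}$-vertex keeps its ribbons inside a single gap missing at least one of the two arms of $\tilde{v}$, so $C_{1,2}$ stays intact outside $C_*$.

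The main obstacle is the completeness direction: showing that no component outside $M$ joins $C_*$. I expect this to follow from a case split via Lemma~\ref{L:location}: a component $C' \notin M$ either sits in a gap of some $C \in M$ not reached by $\tilde{v}$, or lies entirely outside $R(C_{1,2})$. In both cases the $v^*$-positions of $C'$-ribbons fail to interleave with both of $\tilde{v}$'s arms, so the gluing creates no new $\sigma$-crossings involving $C'$, and $C'$ survives as its own component. This interval bookkeeping, relative to the two arms $R(C_1)$ and $R(C_2)$, is where the technical weight of the proof lies.
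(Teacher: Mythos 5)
Your proposal follows essentially the same route as the paper: reduce the gluing to the local vertex-merge of Lemma~\ref{L:model}, use the $\sigma$-crossing characterization of components from Lemma~\ref{L:equivalence}, and do interval bookkeeping around $v^*$ to determine which components collapse into $C_*$. The separation case for $C_{1,2}$ is handled the same way as in the paper. One remark on the non-orientability step: you argue by asserting that after the eq.~(\ref{E:ccc}) normalization, all ribbons inherited from $v_1$ are untwisted while those from the flipped $v_2$ are twisted. That inference is not immediate --- whether a flip of $v_2$ is needed to achieve eq.~(\ref{E:ccc}) is a question about even-sector orientations, while the twist status of ribbons also depends on the fixed orientations of the odd $v^*$-sectors, and you have not shown these two normalizations are compatible (e.g.\ if no flip is needed, or if $v_1$ must be flipped instead, the claim would not follow). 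The paper instead argues directly from Lemma~\ref{L:model} that the two new sectors $\tilde{x}_{2j+2},\tilde{x}_{2i}$ of the merged vertex carry opposite orientations, which by Lemma~\ref{L:orientable} gives non-orientability without invoking any control over twist. You also explicitly flag the completeness direction (that no component outside $M$ is absorbed into $C_*$); the paper leaves this implicit, relying on the statement in Lemma~\ref{L:model} that gluing does not change the $\sigma$-crossings of vertices other than $v_1,v_2$, so your observation is a worthwhile addition rather than a deviation.
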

%%%
%%%%%%%%%%%%%%%%%%%%%%%%%%%%%%%%%%%%%%%%%%%%%%%%%%%%%%%%%%%%%%%%%%%%%%%%%%%%%%%%%%%%%
%%%
\begin{proof}
Let $C_1,C_2\in \mathfrak{O}_{\mathbb{P}_n}$ and let $v_1$ be the $C_1$-vertex containing
$x_{2j+2}$ and $v_2$ the $C_2$-vertex containing $x_{2i}$. By Lemma~\ref{L:model},
gluing merges $v_1$ and $v_2$ into $v_*$, without changing the $\sigma$-crossings of
any other vertices. Consequently, Lemma~\ref{L:model} and Lemma~\ref{L:equivalence}
imply that all $C_1$- and $C_2$-vertices merge in $\tilde{\mathbb{P}}_n$ into one
component, $C_*$.
By Lemma~\ref{L:model}, gluing produces a pair of sectors $\tilde{x}_{2j+2},
\tilde{x}_{2i}$, that are contained in a single, external vertex and that have different
orientations. Thus, by Lemma~\ref{L:orientable}, $C_*$, is a non-orientable component.

We consider $C_3\in [C_1,C_{1,2}]$ such that $C_1\sqsubset C_3$.
Since $C_1\sqsubset C_3$ there are two ribbons $(a,\sigma(a))$, $(b,\sigma(b))$ incident
to some $C_3$-vertex $v_3$ such that
\begin{equation}\label{E:cc1}
\sigma(a) <_\sigma d <_\sigma \sigma(d) <_\sigma b,
\end{equation}
where $d,\sigma(d)$ are the odd sectors of an arbitrary $C_1$-ribbon.
$C_2$ is by definition of $[C_1,C_{1,2}]$ not nested in $C_3$.
Thus $C_2$ lies, counterclockwise around $v^*$, to the right of $C_3$.
As a result we have
\begin{equation}\label{E:cc2}
\sigma(a) <_\sigma  \min_\sigma C_1 <_\sigma \max_\sigma C_1 <_\sigma b  <_\sigma
\min_\sigma C_2.
\end{equation}
Gluing $v_1,v_2$ w.r.t.~the sectors $x_{2j+2}$ and $x_{2i}$ generates the vertex $v_*$ and
a $\sigma$-crossing of $v_*$ and $v_3$. Therefore $C_3$ is merged into
$C_*$, see Fig.~\ref{F:contract} (D).

The case of $C_{1,2}$ separating $C_1$ and $C_2$ is analogous: then there exists a
$C_{1,2}$-vertex, $w$, together with two incident ribbons $(a,\sigma(a))$,
$(b,\sigma(b))$ such that eq.~(\ref{E:cc2}) holds. Accordingly, gluing $v_1,v_2$
w.r.t.~$x_{2j+2}$ and $x_{2i}$ generates a $\sigma$-crossing of $v_*$ and $w$,
see Fig.~\ref{F:contract} (C).
The case of $[C_2,C_{1,2}]$ is argued analogously, whence the lemma.
\end{proof}

Lemma~\ref{L:contract} suggests to glue two vertices contained in minimal
$\mathfrak{O}_{\mathbb{P}_n}$-elements, $C_1,C_2$. This collapses at least the
entire chains $[C_1,C_{1,2}]$ and $[C_2,C_{1,2}]$, respectively, as well as
possibly $C_{1,2}$ if $C_{1,2}$ separates $C_1,C_2$.

%%%
%%%%%%%%%%%%%%%%%%%%%%%%%%%%%%%%%%%%%%%%%%%%%%%%%%%%%%%%%%%%%%%%%%%%%%%%%%%
%%%
\section{The reversal distance}\label{S:hurdles}
%%%
%%%%%%%%%%%%%%%%%%%%%%%%%%%%%%%%%%%%%%%%%%%%%%%%%%%%%%%%%%%%%%%%%%%%%%%%%%%
%%%

This section is the reformulation of Hannenhalli and Pevzners treatment of hurdles
\cite{Pevzner:99} into the topological framework.
To relate our approach to breakpoint graphs, we note that an orientable component
in a $\pi$-map corresponds to a component without oriented cycles in the breakpoint
graph. Furthermore, gluing two hurdles in a $\pi$-map corresponds to the merging two
hurdles in the breakpoint graph \cite{Pevzner:99}.

%%%
%%%%%%%%%%%%%%%%%%%%%%%%%%%%%%%%%%%%%%%%%%%%%%%%%%%%%%%%%%%%%%%%%%%%%%%%%%%%%%%%%%%%%%%
%%%
\begin{definition}\label{D:hurdle}\cite{Pevzner:99}
A hurdle is either
\begin{itemize}
\item a minimal $\mathfrak{O}_{\mathbb{P}_n}$-element, i.e.~an interval
      $v^*$-$[i,k]_\sigma$ inducing an irreducible $\pi$-map, or
\item the maximum element in $(\mathfrak{O}_{\mathbb{P}_n},\sqsubset)$ which
      does not separate any pair of leaves.
\end{itemize}
A super-hurdle is a $(\mathfrak{O}_{\mathbb{P}_n},\sqsubset)$-hurdle, whose
deletion creates a $(\mathfrak{O}_{\mathbb{P}_n}\setminus\{C\},\sqsubset)$-hurdle.
\end{definition}
%%%
%%%%%%%%%%%%%%%%%%%%%%%%%%%%%%%%%%%%%%%%%%%%%%%%%%%%%%%%%%%%%%%%%%%%%%%%%%%%%%%%%%%%%%%
%%%

A reversal is called safe if it reduces $(g+h)$ by one, i.e., either $g$ decreases
by one and $h$ persists, or $g$ increases by one and $h$ decreases by two.

%%%%%%%%%%%%
%%%%%%%%%%%%%%%%%%%%%%%%%%%%%%%%%%%%%%%%%%%%%%%%%%%%%
%%%%%%%%%%%%
\begin{figure}[t]
\begin{center}
  \includegraphics[width=0.9\columnwidth]{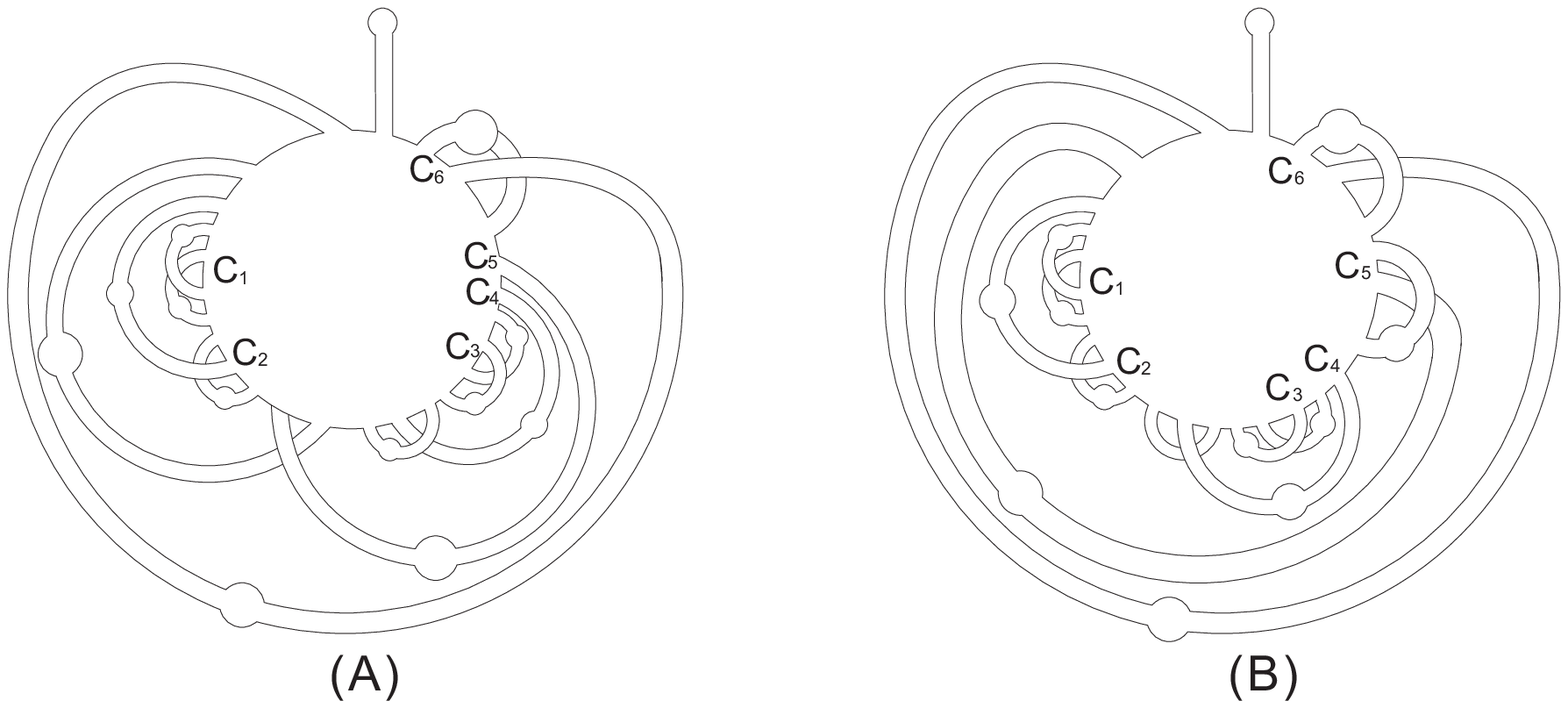}
\end{center}
\caption{\small Hurdles: (A) a $\pi$-map with $6$ orientable components.
$C_1$ and $C_3$ are minimal and $C_6$ is a maximal hurdle.
$C_1$ and $C_3$ are in addition super-hurdles, removing $C_1$ and $C_3$ renders
$C_2$ and $C_4$ as hurdle, respectively and neither, $C_2$ or $C_4$ were
originally a hurdle. $C_6$ is not a super-hurdle, since $C_5$ separates $C_1$
and $C_3$.
(B) $C_6$ is here a super-hurdle, since $C_5$ does not separate $C_1$ and $C_3$
    and becomes the maximum hurdle upon removal of $C_6$.
}
\label{F:hurdle}
\end{figure}
%%%
%%%%%%%%%%%%%%%%%%%%%%%%%%%%%%%%%%%%%%%%%%%%%%%%%%%%%%%%%%%%%%%%%%%%%%%%%%%%%%%%%%%%%%
%%%

%%%
%%%%%%%%%%%%%%%%%%%%%%%%%%%%%%%%%%%%%%%%%%%%%%%%%%%%%%%%%%%%%%%%%%%%%%%%%%%%%%%%%%%%%%%
%%%

Then we have

%%%
%%%%%%%%%%%%%%%%%%%%%%%%%%%%%%%%%%%%%%%%%%%%%%%%%%%%%%%%%%%%%%%%%%%%%%%%%%%%%%
%%%
\begin{lemma}\label{L:safe} \cite{Pevzner:99}
Let $\mathbb{P}_n$ be a $\pi$-map containing the two hurdles $H_1,H_2$ such that
$x_{2j+2}\in H_1$ and $x_{2i}\in H_2$ are two even sectors.
Then gluing w.r.t.~$x_{2j+2}$ and $x_{2i}$ is safe if there exist two hurdles
$U_1$ and $U_2$ such that
$$
H_1\lhd U_1\lhd H_2\lhd U_2 \quad \text{\rm or }\quad U_1\lhd H_1\lhd U_2\lhd H_2.
$$
\end{lemma}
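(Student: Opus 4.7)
The plan is to establish safeness by separately controlling the genus change and the hurdle change. Specifically, I will show that gluing $x_{2j+2}$ and $x_{2i}$ increases the topological genus by exactly one (via Euler's characteristic) and, under the $U_1,U_2$ hypothesis, decreases the hurdle count by at least two, so that $g+h$ drops by at least one.

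For the genus step, since $H_1\neq H_2$ the distinguished sectors $x_{2j+2}$ and $x_{2i}$ lie on two distinct vertices $v_1\in H_1$ and $v_2\in H_2$, and the reversal acts by gluing (Scenario~$1$ of Section~\ref{S:reversal}). By Lemma~\ref{L:model}, $\tilde{\mathbb{P}}_n$ is again a $\pi$-map and hence unicellular ($b=1$); the ribbon count is preserved while the vertex count decreases by one, since $v_1,v_2$ are merged into the single vertex $\tilde v$. Euler's identity $2-g-b=v-e$ immediately yields $g_{\text{new}}=g_{\text{old}}+1$.

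For the hurdle step, I would apply Lemma~\ref{L:contract} to identify the set $M$ of orientable components that are merged into the new non-orientable component $C_*$. Since $H_1,H_2\in M$, both cease to be hurdles in $\tilde{\mathbb{P}}_n$, which by itself would already give a $-2$ contribution. It remains to check that (i) no further old hurdle lies in $M$, and (ii) no orientable component outside $M$ becomes newly minimal, or becomes a newly maximum non-separating element, in the reduced poset $(\mathfrak{O}_{\mathbb{P}_n}\setminus M,\sqsubset)$. The witnesses $U_1,U_2$ sit at $\sigma$-positions strictly interlacing with $H_1,H_2$ in the $\lhd$-order, whereas by Lemma~\ref{L:contract} the set $M$ is assembled from the $\sqsubset$-chains $[H_1,H_{1,2}]$, $[H_2,H_{1,2}]$ (and possibly $\{H_{1,2}\}$) rather than from a contiguous $\lhd$-range; exploiting this mismatch one concludes $U_1,U_2\notin M$. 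Hence $U_1,U_2$ persist as hurdles, and their location in the surviving poset obstructs any other component from becoming newly minimal or from becoming the new maximum hurdle violating the non-separation condition of Definition~\ref{D:hurdle}.

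The main obstacle is the casework in this last step, which branches on (a) the two possible $\lhd$-orderings in the hypothesis, (b) whether $H_{1,2}$ separates $H_1$ from $H_2$ (so that $H_{1,2}\in M$) or not, and (c) whether $U_1$ and $U_2$ are themselves minimal hurdles or play the role of the maximum hurdle. In each case one verifies that the presence of a $U_i$ strictly between or strictly outside $H_1,H_2$ in $\lhd$-order blocks the only two mechanisms by which a new hurdle could appear: the elevation of a non-minimal component to minimality upon deletion of its nested hurdles, and the promotion of a new maximum that fails to separate any leaf pair. Combining the two steps, $g_{\text{new}}+h_{\text{new}}\le g_{\text{old}}+h_{\text{old}}-1$, which is precisely the safeness assertion.
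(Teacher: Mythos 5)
Your overall strategy matches the paper's: quantify the genus change via Euler's identity, then quantify the hurdle change via Lemma~\ref{L:contract}, using $U_1,U_2$ to rule out the birth of a new hurdle. The genus step is correct and identical (gluing merges two vertices, keeps unicellularity and the ribbon count, so $g$ goes up by exactly one). The dichotomy you invoke for the hurdle step --- a new hurdle could only be a newly minimal component or a newly promoted non-separating maximum, per Definition~\ref{D:hurdle} --- is also the right dichotomy and is exactly how the paper organizes the argument.

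However, the hurdle step as written is a plan, not a proof, and it stops precisely at the point where the paper does the actual work. You assert, without justification, that $U_1,U_2\notin M$ and that ``their location in the surviving poset obstructs'' both ways a new hurdle could arise, and you then observe that the remaining content is a casework ``obstacle'' which you describe but do not carry out. This is where the statement is nontrivial: the paper's proof proceeds by contradiction, positing a new hurdle $C$ in $\mathfrak{O}_{\tilde{\mathbb{P}}_n}$ and splitting on (a) $C$ minimal, where $H_1,H_2\sqsubset C$ forces $U_1\sqsubset C$ with $U_1$ surviving the merge, so $C$ cannot be minimal; and (b) $C$ the unique maximum, with sub-cases $H_1,H_2\sqsubset C$ (using that $C$, not being a hurdle before, separates some pair, and showing it still separates a persisting pair involving a $U_i$) and $H_1,H_2\not\sqsubset C$ (where one of $U_1,U_2$ is not nested in $C$, so $C$ is not a unique maximum). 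None of these verifications appear in your write-up; you acknowledge them as branches without resolving them. In addition, the claim $U_1,U_2\notin M$ is plausible but needs the observation that a minimal hurdle cannot sit $\sqsubset$-above $H_1$ or $H_2$ and that the maximum-hurdle case must be handled separately; as stated it is only a pointer to an argument. So the proposal takes the right approach but has a genuine gap: the combinatorial core --- that the interlacing $U_1,U_2$ hypothesis actually blocks both mechanisms of hurdle creation --- is left unproven.
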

%%%
%%%%%%%%%%%%%%%%%%%%%%%%%%%%%%%%%%%%%%%%%%%%%%%%%%%%%%%%%%%%%%%%%%%%%%%%%%%%%%
%%%
\begin{proof}
Suppose that gluing the sectors $x_{2j+2}$ and $x_{2i}$ generates the $\pi$-map,
$\tilde{\mathbb{P}}_n$ and the $\mathfrak{O}_{\tilde{\mathbb{P}}_n}$-hurdle, $C$.
We shall distinguish the scenarios of (a) $C$ being minimal in
$\mathfrak{O}_{\tilde{\mathbb{P}}_n}$, or (b) $C$ being not minimal.

Ad (a): since $C$ is generated by the gluing of $H_1,H_2$, $C$ cannot have been
minimal in $\mathfrak{O}_{{\mathbb{P}}_n}$. Furthermore we have $H_1\sqsubset C$
and $H_2\sqsubset C$.
By Lemma~\ref{L:contract}, $[H_1,C]$ and $[H_2,C]$ collapse, merging into the
non-orientable component, $C_*$. In case of $H_1\lhd U_1\lhd H_2\lhd U_2$, we
have $U_1\sqsubset C$ and by construction $U_1$ does not merge into $C_*$.
This means that $C$ cannot be minimal in $\mathfrak{O}_{{\mathbb{P}}_n}$,
contradiction, see Fig.~\ref{F:super}, (a).
In case of $U_1\lhd H_1\lhd U_2\lhd H_2$ we argue analogously.

%%%%%%%%%%%%
%%%%%%%%%%%%%%%%%%%%%%%%%%%%%%%%%%%%%%%%%%%%%%%%%%%%%
%%%%%%%%%%%%
\begin{figure}[t]
\begin{center}
  \includegraphics[width=0.8\columnwidth]{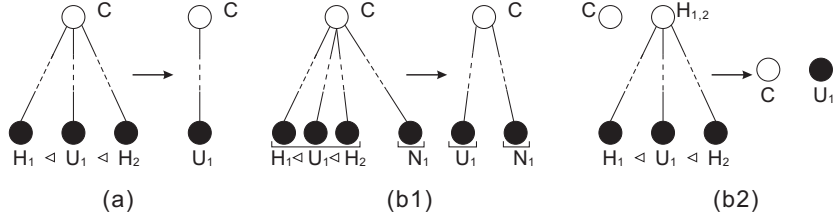}
\end{center}
\caption{\small Safe reversals when gluing $H_1$ and $H_2$:
an orientable component which is not a hurdle (white) and a hurdle (black).
(a): $C$ being minimal in $\mathfrak{O}_{\tilde{\mathbb{P}}_n}$, (b1): $C$ being maximal
and $H_1, H_2 \sqsubset C$ and (b2): $C$ being maximal and $H_1, H_2 \not\sqsubset C$.
}
\label{F:super}
\end{figure}
%%%
%%%%%%%%%%%%%%%%%%%%%%%%%%%%%%%%%%%%%%%%%%%%%%%%%%%%%%%%%%%%%%%%%%%%%%%%%%%%%%%%%%%%%%
%%%

Ad (b): $C$ becomes the unique maximal element in $\mathfrak{O}_{\tilde{\mathbb{P}}_n}$.
We first observe that, by Lemma~\ref{L:contract}, in case of $H_1 \sqsubset C$ and
$H_2\not\sqsubset C$, $C$ necessarily merges into $C_*$. Accordingly, we have the
alternative:
$$
H_1,H_2\sqsubset C \quad \text{\rm  or}\quad  H_1,H_2\not\sqsubset C.
$$

In case of $H_1,H_2\sqsubset C$, $C$ does not separate $H_1,H_2$,
as it would vanish, otherwise.
In case of $H_1,H_2\not\sqsubset C$, $H_{1,2}$ necessarily
separates $H_1,H_2$, since otherwise, $H_{1,2}$ remains and
$H_{1,2}\not\sqsubset C$, whence $C$ is not the unique maximum.

(b1): in case of $H_1,H_2\sqsubset C$ and $C$ does not separate $H_1$ and $H_2$.
Since $C$ is not a hurdle in $\mathfrak{O}_{\mathbb{P}_n}$, $C$ necessarily
separates two orientable components.
As a result, there exists a hurdle $N\neq H_1,H_2$ such that $C$
separates $N$ and $H_1,H_2$.
In particular $C$ separates $N$ and $U_1$, where $H_1\lhd U_1\lhd H_2$.
The two hurdles $N$ and $U_1$ persist, respectively, when gluing $H_1,H_2$,
whence $C$ separates $N$ and $U_1$ in $\mathfrak{O}_{{\tilde{\mathbb{P}}}_n}$.
This implies that $C$ is not a hurdle in $\mathfrak{O}_{\tilde{{\mathbb{P}}}_n}$,
contradiction.

(b2): in case of $H_1,H_2\not\sqsubset C$ we can conclude that either $U_1$ or
$U_2$ are not nested in $C$. Thus $C$ cannot be the unique, maximal element
in $\mathfrak{O}_{\tilde{\mathbb{P}}_n}$, contraction.
\end{proof}

%%%
%%%%%%%%%%%%%%%%%%%%%%%%%%%%%%%%%%%%%%%%%%%%%%%%%%%%%%%%%%%%%%%%%%%%%%%%%%%%%%%%%%%%%
%%%
\begin{corollary}\label{C:safe_reversal}
Let $\mathbb{P}_{n,g}$ be a $\pi$-map and $h$ be the number of hurdles in
$\mathbb{P}_{n,g}$. Then in case of $h\neq 3$, there exists always a reversal
that acts safely on $\mathbb{P}_n$.
\end{corollary}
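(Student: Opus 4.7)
The plan is to argue by case analysis on the number of hurdles $h$, exhibiting an explicit safe reversal in each case. Recall that a reversal is safe precisely when it decreases the quantity $g+h$ by one. The ingredients available are Lemma~\ref{L:safe} (gluing two hurdles), Lemma~\ref{L:splicedown} (slicing a non-orientable component), and Lemma~\ref{L:twist_one} (half-flipping an orientable component), together with Lemma~\ref{L:contract}, which pinpoints exactly which orientable components merge under a given gluing.

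For the generic regime $h\ge 4$, Lemma~\ref{L:safe} applies directly. I would linearly order the hurdles by the $\lhd$-relation as $H_1\lhd H_2\lhd\cdots\lhd H_h$ and choose $H_1,H_3$ as the pair to be glued. Then $H_2$ and $H_4$ serve as the witnesses $U_1,U_2$, giving the chain $H_1\lhd U_1\lhd H_3\lhd U_2$ that matches the hypothesis of Lemma~\ref{L:safe} (with $H_3$ playing the role of $H_2$ in the lemma statement). The argument needs exactly $h\ge 4$, which is why $h=3$ is singled out as the exceptional value.

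For the low-$h$ cases I would argue separately. If $h=0$, then the poset $\mathfrak{O}_{\mathbb{P}_n}$ must be empty, because its $\sqsubset$-minimum would otherwise be a hurdle by Definition~\ref{D:hurdle}; hence every non-trivial component is non-orientable, and Lemma~\ref{L:splicedown} supplies a slicing that lowers $g$ by one while keeping $h=0$. If $h=2$, I would glue the two hurdles $H_1,H_2$: by Lemma~\ref{L:contract} they merge, together with any sandwiched orientable components, into a single non-orientable component $C_*$, so both hurdles simultaneously leave $\mathfrak{O}$, giving $h\to 0$ and $g\to g+1$. If $h=1$, I would half-flip the unique hurdle $H$ via Lemma~\ref{L:twist_one}; this preserves the genus of $H$ but turns it non-orientable, removing it from $\mathfrak{O}$, so $h$ drops to zero while $g$ is unchanged.

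The main obstacle will be verifying, in the low-$h$ cases, that no previously non-hurdle orientable component acquires hurdle status in the post-reversal $\pi$-map. This requires tracking how both clauses of Definition~\ref{D:hurdle}, the minimality clause and the maximum-non-separating clause, respond to the change in $(\mathfrak{O},\sqsubset)$. For $h=2$ one must confirm that every component of $\mathfrak{O}\setminus\{H_1,H_2\}$ that could inherit minimality is already contained in the set $M$ of Lemma~\ref{L:contract}; for $h=1$ the subtlety is the super-hurdle phenomenon, in which half-flipping a super-hurdle $H$ would promote its immediate $\sqsubset$-neighbor to a new minimum and thereby fail to lower $h$. Ruling this out will require either a separate argument showing that $H$ cannot be a super-hurdle when it is the unique hurdle, or the exhibition of an alternative safe reversal such as slicing a non-orientable component of positive genus when one is available.
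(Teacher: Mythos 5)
Your overall strategy—case analysis on $h$, invoking Lemma~\ref{L:safe} for $h\ge 4$, gluing $H_1,H_2$ for $h=2$, and half-flipping for $h=1$—is the same route the paper takes. Your choice of $H_1,H_3$ with witnesses $H_2,H_4$ is a harmless variant of the paper's choice of $H_1,H_{1+\lfloor h/2\rfloor}$, and your explicit treatment of $h=0$ via Lemma~\ref{L:splicedown} is a reasonable addition that the paper leaves implicit.

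However, the last paragraph of your proposal names two verification steps that it does not carry out, and these are not peripheral: they are precisely what makes the $h=1$ and $h=2$ cases safe rather than merely plausible. For $h=1$, the super-hurdle concern you raise is in fact vacuous, but that must be argued. If $H$ is the unique hurdle, then $\mathfrak{O}_{\mathbb{P}_n}=\{H\}$: suppose otherwise, so some $C$ satisfies $H\sqsubset C$. Since $H$ is the only minimal element (any other minimal would be a second hurdle), Lemma~\ref{L:location} forces $\mathfrak{O}_{\mathbb{P}_n}$ to be a chain with a unique maximum $M$; but $M$ cannot separate a pair of leaves when $H$ is the only leaf, so $M$ would be a second hurdle, contradiction. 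A singleton $\mathfrak{O}$ admits no super-hurdle, hence half-flipping removes $H$ and is safe. For $h=2$, you need that the set $M$ of Lemma~\ref{L:contract} exhausts $\mathfrak{O}_{\mathbb{P}_n}$: when $H_1,H_2$ are both minimal, $H_{1,2}$ must separate them (otherwise $H_{1,2}$ is a third hurdle), and any component outside $[H_1,H_{1,2}]\cup[H_2,H_{1,2}]\cup\{H_{1,2}\}$ would introduce a new minimal element and hence a third hurdle; when one of $H_1,H_2$ is the maximum, $\mathfrak{O}_{\mathbb{P}_n}$ is a single chain and the gluing collapses all of it. Without writing these out, you have proposed the correct reversal in each case but have not shown it is safe, so the proof as presented has a genuine gap.
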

%%%
%%%%%%%%%%%%%%%%%%%%%%%%%%%%%%%%%%%%%%%%%%%%%%%%%%%%%%%%%%%%%%%%%%%%%%%%%%%%%%%%%%%%%
%%%
\begin{proof}
We label the hurdles such that $H_1\lhd H_2\lhd \cdots\lhd H_h$ holds.\\
For $h=1$, we half-flip, which preserves $g$ and reduces $h$ by one. Therefore,
any half-flip is in this scenario safe.\\
For $h=2$, we glue $H_1,H_2$. If $H_1,H_2$ are both minimal, then there exists no
unique maximal, non-separating hurdle. Thus $H_{1,2}$ is the unique maximal component
which separates $H_1,H_2$ and consequently vanishes by the gluing. Hence $g$ increases
by one and $h$ decreases by two.
If $H_2$ is the maximal hurdle, then we have a unique chain and gluing merges the
latter, whence $g$ increases by one and $h$ decreases by two. Thus, gluing $H_1$
and $H_2$ is a safe reversal.\\
For $h>3$, we glue $H_1$ and $H_{1+\lfloor h/2\rfloor}$, where
$$
H_1\lhd H_2\lhd H_{1+\lfloor h/2\rfloor} \lhd H_h.
$$
By Lemma~\ref{L:3} and Lemma~\ref{L:safe}, $g$ increases by
one and $h$ decreases by two, i.e.~it is a safe reversal.
By successively removing such pairs of hurdles, we can reduce the
situation to $h=1$ or $h=0$.
\end{proof}

%%%
%%%%%%%%%%%%%%%%%%%%%%%%%%%%%%%%%%%%%%%%%%%%%%%%%%%%%%%%%%%%%%%%%%%%%%%%%%%%%%%%%%%%%
%%%
\begin{lemma}\cite{Pevzner:99}
Suppose $\mathbb{P}_n$ is a $\pi$-map and $\tilde{\mathbb{P}}_n$ is obtained by gluing
the two hurdles $H_1$ and $H_2$. Then, any super-hurdle $U\neq H_1,H_2$ contained in
$(\mathfrak{O}_{\mathbb{P}_n},\sqsubset)$, is also a super-hurdle of
$(\mathfrak{O}_{\tilde{\mathbb{P}}_n},\sqsubset)$.
\end{lemma}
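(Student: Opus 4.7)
The plan is to recast the gluing of $H_1$ and $H_2$ as a restriction of the orientable-component poset. By Lemma~\ref{L:contract}, gluing merges exactly the set $M$ into the non-orientable component $C_*$ and leaves the nestings among all remaining orientable components intact, so, as posets,
$$
(\mathfrak{O}_{\tilde{\mathbb{P}}_n},\sqsubset) \; = \; (\mathfrak{O}_{\mathbb{P}_n}\setminus M,\sqsubset).
$$
The task is then to show that both defining properties of the super-hurdle $U$ — being a hurdle, and having its deletion create a new hurdle $W$ — transfer across this restriction.

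First I would show $U,W \notin M$. Every $C \in M$ satisfies $H_i \sqsubseteq C \sqsubseteq H_{1,2}$ for $i\in\{1,2\}$, and $H_{1,2}$ belongs to $M$ only when it separates $H_1,H_2$. If $U$ is a minimal hurdle, then $U\neq H_1,H_2$ forbids $U\sqsupseteq H_1$ or $U\sqsupseteq H_2$, so $U\notin M$. If $U$ is the unique maximum hurdle, then $H_{1,2}\sqsubseteq U$, so $U\in M$ forces $U=H_{1,2}$ with $H_{1,2}$ separating $H_1,H_2$; but then $H_1,H_2$ are leaves lying strictly below the maximum hurdle, contradicting the non-separation clause in Definition~\ref{D:hurdle}. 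The same argument, applied to the hurdle $W$ created in $(\mathfrak{O}_{\mathbb{P}_n}\setminus\{U\},\sqsubset)$, gives $W\notin M$.

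Next, I would verify that $U$ remains a hurdle in $\mathfrak{O}_{\tilde{\mathbb{P}}_n}$, and by the analogous argument applied after further removing $U$, that $W$ remains a hurdle in $\mathfrak{O}_{\tilde{\mathbb{P}}_n}\setminus\{U\}$. If $U$ was minimal, shrinking the poset preserves minimality. If $U$ was the maximum hurdle, it stays maximum in the smaller poset, and the key point is the non-separation property: since $M \subseteq H_{1,2} \sqsubseteq U$, the entire excision takes place inside a single $U$-gap; so every leaf of $\mathfrak{O}_{\tilde{\mathbb{P}}_n}$ — whether it persisted from $\mathfrak{O}_{\mathbb{P}_n}$ or became minimal only after $M$ was removed — lies in the same $U$-gap as some old leaf, and the partition of leaves across $U$-gaps is preserved.

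The main obstacle is this last leaf-tracking step in the maximum-hurdle case: one must verify that a component $L\notin M$ whose $\sqsubset$-descendants in $\mathfrak{O}_{\mathbb{P}_n}$ all lie in $M$ (so that $L$ becomes a new leaf of $\mathfrak{O}_{\tilde{\mathbb{P}}_n}$) is confined, by the containment $M \subseteq H_{1,2}$ together with Lemma~\ref{L:location}, to the same $U$-gap and $W$-gap as the old leaves it replaces. Once that is secured, the super-hurdle status of $U$ transfers verbatim from $\mathbb{P}_n$ to $\tilde{\mathbb{P}}_n$.
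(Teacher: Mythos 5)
Your argument is essentially the same as the paper's: both proofs show that $U$ and the hurdle $W$ its deletion creates avoid the merged set $M$, and then case-split on whether these components are minimal or the unique maximum in the orientable-component poset to see that hurdle status survives the removal of $M$. Your explicit poset-restriction framing $(\mathfrak{O}_{\tilde{\mathbb{P}}_n},\sqsubset)=(\mathfrak{O}_{\mathbb{P}_n}\setminus M,\sqsubset)$ is cleaner, and you correctly flag the leaf-tracking detail in the maximal case, which the paper asserts without elaboration.
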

%%%
%%%%%%%%%%%%%%%%%%%%%%%%%%%%%%%%%%%%%%%%%%%%%%%%%%%%%%%%%%%%%%%%%%%%%%%%%%%%%%%%%%%%%
%%%
\begin{proof}
Since $U$ is a hurdle, by Lemma~\ref{L:contract}, gluing $H_1$ and $H_2$ does not merge
$U$ into $C_*$, whence $U\in (\mathfrak{O}_{\tilde{\mathbb{P}}_n},\sqsubset)$.
Since $U$ is a super-hurdle, deleting $U$ from $(\mathfrak{O}_{\mathbb{P}_n},\sqsubset)$
creates a new hurdle, $U'$.

The key point is to show that $U'\in (\mathfrak{O}_{\tilde{\mathbb{P}}_n},\sqsubset)$ and that
$U'$ is a hurdle in $\mathfrak{O}_{\tilde{\mathbb{P}}_n}$, i.e.~$U$ remains being a
super-hurdle in $(\mathfrak{O}_{\tilde{\mathbb{P}}_n},\sqsubset)$.
To establish $U'\in (\mathfrak{O}_{\tilde{\mathbb{P}}_n},\sqsubset)$ is a hurdle we consider
$U'$ in $(\mathfrak{O}_{\mathbb{P}_n}\setminus\{U\},\sqsubset)$ and distinguish two cases:

In case of $U'$ being minimal, we conclude that $U$ is minimal in $(\mathfrak{O}_{\mathbb{P}_n},
\sqsubset)$. Thus $U'$ is not contained in $[H_1,H_{1,2}]\cup [H_2,H_{1,2}]\cup \{H_{1,2}\}$,
i.e.~$U'$ remains to be a minimal hurdle in $(\mathfrak{O}_{\tilde{\mathbb{P}}_n},\sqsubset)$.

If $U'$ is the unique maximal hurdle in $(\mathfrak{O}_{\mathbb{P}_n}\setminus\{U\},\sqsubset)$,
it does not separate any pair of hurdles. We now inspect the effect of gluing $H_1$ and $H_2$.
Since $U'$ does not separate any pair of hurdles, $U'$ is not contained in $[H_1,H_{1,2}]\cup
[H_2,H_{1,2}]\cup \{H_{1,2}\}$ and accordingly not merged into $C_*$. Furthermore, $U'$ remains
to be the unique maximal hurdle in $(\mathfrak{O}_{\tilde{\mathbb{P}}_n},\sqsubset)$.

In both cases $U$ remains to be a super-hurdle in $(\mathfrak{O}_{\tilde{\mathbb{P}}_n},
\sqsubset)$ and the lemma follows.
\end{proof}

By Corollary~\ref{C:safe_reversal}, there exists always a safe reversal except in the
case $h=3$. If not all of them are super-hurdles, then half-flipping one non super-hurdle
reduces the situation to $h=2$. Thus it remains to consider the case of all three
hurdles being super-hurdles.

%%%
%%%%%%%%%%%%%%%%%%%%%%%%%%%%%%%%%%%%%%%%%%%%%%%%%%%%%%%%%%%%%%%%%%%%%%%%%%%%%%%%%%%%%
%%%
\begin{lemma}\label{L:3h} \cite{Pevzner:99}
Let $\mathbb{P}_n$ be a $\pi$-map with exactly three hurdles, all of which being
super-hurdles. Then there exists no safe reversal.
\end{lemma}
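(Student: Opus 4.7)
The plan is to show that under the hypothesis of three super-hurdles, every reversal fails to satisfy $\Delta(g+h)=-1$. By Lemma~\ref{L:3}, every reversal is a gluing ($\Delta g=+1$), a slicing ($\Delta g=-1$), or a half-flipping ($\Delta g=0$). In the setting of this section non-orientable components have already been reduced via Lemma~\ref{L:splicedown}, so I would assume $\mathbb{P}_n$ consists only of orientable components; by Lemma~\ref{L:orientable}, no external vertex then carries sectors of both orientations, ruling out slicing. Hence only gluing and half-flipping remain.

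For half-flipping, $\Delta g=0$, so safety demands $\Delta h=-1$. If the half-flipped vertex lies in a super-hurdle $H_i$, then by Lemma~\ref{L:twist_one}, $H_i$ becomes non-orientable and is removed from the hurdle count; however, the defining super-hurdle property of $H_i$ asserts that $(\mathfrak{O}_{\mathbb{P}_n}\setminus\{H_i\},\sqsubset)$ contains a hurdle not present originally, and this new hurdle persists in $\mathfrak{O}_{\tilde{\mathbb{P}}_n}$, yielding $\Delta h=0$. If the half-flipped vertex lies in a non-hurdle orientable component $C$, then each of the three super-hurdles retains its hurdle status after $C$ is removed from $\mathfrak{O}$, so $\Delta h\geq 0$. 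In either sub-case, safety fails.

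For gluing, $\Delta g=+1$, so safety demands $\Delta h=-2$. By Lemma~\ref{L:contract}, gluing merges a set $M$ of orientable components into a single non-orientable $C_*$. Because gluing operates on only two vertices, $M$ contains at most two of the three hurdles; if fewer than two lie in $M$, then $\Delta h\geq -1$ trivially. The critical case is $M\supseteq\{H_1,H_2\}$ with $H_3\notin M$, yielding a direct hurdle loss of $-2$. I would then invoke the super-hurdle property of $H_3$: there exists a non-hurdle $H_3'\in\mathfrak{O}_{\mathbb{P}_n}$ which becomes a hurdle in $(\mathfrak{O}_{\mathbb{P}_n}\setminus\{H_3\},\sqsubset)$. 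Using Lemma~\ref{L:location}, the witness structure for the super-hurdle property of $H_3$ is $\sqsubset$-disjoint from the merged nest $[H_1,H_{1,2}]\cup[H_2,H_{1,2}]\cup\{H_{1,2}\}$ forming $M$; hence $H_3'\notin M$ and $H_3'$ remains a hurdle in $\mathfrak{O}_{\tilde{\mathbb{P}}_n}$, giving $\Delta h\geq -1$ and ruling out safety.

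The main obstacle will be the gluing case: verifying that the super-hurdle witness $H_3'$ survives outside the merged set $M$. This requires carefully tracking the nesting of $\{H_1,H_2,H_3\}$ under $\sqsubset$ and their linear positions under $\lhd$, distinguishing whether $H_3$ is minimal in $\mathfrak{O}_{\mathbb{P}_n}$ or the unique maximum non-separating element, in parallel with the proof of Lemma~\ref{L:safe}.
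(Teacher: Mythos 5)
Your overall strategy matches the paper's: a safe reversal must be either a half-flip with $\Delta h=-1$ or a gluing with $\Delta h=-2$, and you rule out both. The half-flip argument is correct and essentially the paper's, though more carefully unpacked. The assumption that only orientable components remain is also correct for the context in which the lemma is applied.

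The gluing case, however, has a genuine gap. Having observed that $M\supseteq\{H_1,H_2\}$ and $H_3\notin M$, you need to exhibit a \emph{second} hurdle in $\mathfrak{O}_{\tilde{\mathbb{P}}_n}$ besides $H_3$. You invoke the super-hurdle property of $H_3$, i.e.\ that some $H_3'$ becomes a hurdle in $(\mathfrak{O}_{\mathbb{P}_n}\setminus\{H_3\},\sqsubset)$, and then conclude ``$H_3'$ remains a hurdle in $\mathfrak{O}_{\tilde{\mathbb{P}}_n}$.'' This does not follow: $H_3'$ is a hurdle only after \emph{deleting $H_3$}, but gluing $H_1,H_2$ leaves $H_3$ in $\mathfrak{O}_{\tilde{\mathbb{P}}_n}$, so there is no reason for $H_3'$ to be a hurdle there. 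Moreover, the new hurdle that actually appears is not always $H_3$'s super-hurdle witness. In the paper's case analysis it can be $H_{1,2}$ itself (when $H_{1,2}=K_1=K_2$ survives the gluing), or the top $K_3$ of the third chain (when $H_{1,2}$ is absorbed into $C_*$ but $K_3$ is not), or, in the sub-case where $H_3$ is maximal, a newly created minimal element of the merged chain; the super-hurdle property of $H_3$ is used only in one of these sub-cases, to show $U_3\neq K_3$. So your ``main obstacle'' is real, and the sketch of how to overcome it would not hold up — the argument requires the explicit case split on whether $H_3$ is minimal or the unique maximum, and within the minimal case on whether $H_{1,2}$ is or is not contracted and whether $H_{1,2}=K_3$, as in Lemma~\ref{L:3h} of the paper. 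You also quietly restrict attention to gluing $H_1,H_2$; the pairs $(H_1,H_3)$ and $(H_2,H_3)$ need to be handled too, particularly when $H_3$ is the maximal hurdle, where the paper treats them separately.
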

%%%
%%%%%%%%%%%%%%%%%%%%%%%%%%%%%%%%%%%%%%%%%%%%%%%%%%%%%%%%%%%%%%%%%%%%%%%%%%%%%%%%%%%%%
%%%
\begin{proof}
A safe reversal means to either (1) half-flip, which keeps $g$ and decreases $h$ by one, or
to (2) glue in which case $g$ increases by one and $h$ decreases by two.

In case of (1), since all three hurdles are super-hurdles, half-flipping is not safe
as it neither reduces the number hurdles nor the topological genus. Thus it suffices
to consider (2).\\

In case of (2), suppose we have the three super-hurdles denoted via
$H_1\lhd H_2\lhd H_3$ and that deleting $H_i$ creates the hurdle $U_i$,
$1\le i\le 3$. It follows from
$$
{\max}_\sigma H_1 <_\sigma {\max}_\sigma H_2<_\sigma {\max}_\sigma H_3,
$$
that $H_1$ and $H_2$ are necessarily minimal.

We distinguish two scenarios: \\

{\it Scenario 1.} suppose $H_3$ is minimal.\\
Then $\mathfrak{O}_{\mathbb{P}_n}$ has three chains $H_i, U_i, \ldots K_i$, $1\le i\le 3$, where
the $K_i$ are not necessarily all different. Furthermore, each chain has length at least two.
Assume that we glue the two minimum hurdles, say $H_1$ and $H_2$ (to glue $H_1$ and
$H_3$, $H_2$ and $H_3$ are argued similar). Then the two chains $H_1,U_1,\ldots,K_1$ and
$H_2,U_2,\ldots,K_2$ merge into $C_*$ either including $H_{1,2}$ or not. We distinguish
three cases: \\
{\rm (a)}  $H_{1,2}=K_1=K_2$ remains as a minimal element. Then $H_{1,2}$ is a hurdle and
           $g+h$ changes into $(g+1)+(h+1)-2=g+h$, \\
{\rm (b1)} $H_{1,2}$ is contracted and $K_3$ is not contracted. Then $K_3$ becomes the
           unique maximal hurdle (as it does not separate any two leaves) and $g+h$
           remains constant,\\
{\rm (b2)} $H_{1,2}=K_3$ is contracted. Then $K_1=K_2=K_3$ and $K_3$ separates
           $H_1$ and $H_2$. Since $H_3$ is a super-hurdle, deleting $H_3$ creates the
           hurdle, $U_3$, which implies $U_3\neq K_3$. Accordingly, gluing of $H_1,H_2$
           transforms the chain $H_3, U_3, \ldots, K_3$ into a chain having a maximal
           element not equal to $H_3$. The latter is a hurdle, whence $g+h$ remains constant.

{\it Scenario 2.} suppose $H_3$ is maximal. \\
Then there are the two chains
$$
H_1, U_1, \ldots, U_3, H_3, \quad \quad H_2, U_2, \ldots, U_3, H_3,
$$
where $U_1, U_2\neq U_3$ and $H_3$, as a super-hurdle, does not separate $H_1$ and $H_2$.
Then gluing $H_1$ and $H_2$ is not safe as it preserves $H_3$ and produces an unique
chain, creating a new, minimal hurdle.
Using Lemma~\ref{L:equivalence}, we observe that gluing $H_1$ and $H_3$ produces an
unique chain together with a new, maximal hurdle. Therefore, the number of hurdles
decreases only by one, whence it is not safe.

Accordingly, we have shown that there exists no safe reversal in a $\pi$-map which contains
only three hurdles, all of which are super-hurdles.
\end{proof}

%%%
%%%%%%%%%%%%%%%%%%%%%%%%%%%%%%%%%%%%%%%%%%%%%%%%%%%%%%%%%%%%%%%%%%%%%%%%%%%%%%%%%%%%%
%%%
\begin{theorem}
Let $b_n$ be a signed permutation with associated class of $\pi$-maps $[\mathbb{P}_{n,g}]$,
having genus $g$ and $h$ hurdles. Then we have
$$
d(b_n) =
\begin{cases}
g+h+1 & \text{if $h\neq 1$ is odd and all $h$ hurdles are super-hurdles,} \\
g+h  & \text{otherwise}  \\
\end{cases}
$$
\end{theorem}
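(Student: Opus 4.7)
The plan is to prove matching upper and lower bounds. Write $\Phi(b_n)=g+h$, and let $f(b_n)=1$ if $h$ is odd with $h\neq 1$ and all $h$ hurdles are super-hurdles, $f(b_n)=0$ otherwise; the claim then reads $d(b_n)=\Phi(b_n)+f(b_n)$.

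For the lower bound I would first establish the monotonicity $\Delta\Phi\ge -1$ under an arbitrary reversal. Lemma~\ref{L:3} already yields $|\Delta g|\le 1$, and a case analysis of the action of gluing, slicing and half-flipping on the component poset $(\mathfrak{O}_{\mathbb{P}_n},\sqsubset)$, using Lemma~\ref{L:contract} and the super-hurdle persistence lemma preceding Lemma~\ref{L:3h}, shows $\Delta(g+h)\ge -1$ in every scenario. This gives $d(b_n)\ge\Phi(b_n)$. For the extra $+1$ in the fortress case, Lemma~\ref{L:3h} asserts that no reversal is safe, i.e.~$\Phi(b_n\rho)\ge\Phi(b_n)$ for every $\rho$; applying the already obtained lower bound to $b_n\rho$ yields $d(b_n\rho)\ge\Phi(b_n)$, and minimising over $\rho$ gives $d(b_n)\ge\Phi(b_n)+1$.

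For the upper bound I would proceed by lexicographic induction on the pair $(\Phi(b_n),f(b_n))$ with $f=0<1$; the base case $\Phi=0$ is Corollary~\ref{C:trivial}. If $f(b_n)=0$, Corollary~\ref{C:safe_reversal} supplies a safe reversal, and the key refinement is to pick one so that $b_n\rho$ is again non-fortress, after which induction gives $d(b_n)\le 1+(\Phi(b_n)-1)=\Phi(b_n)$. If $f(b_n)=1$, Lemma~\ref{L:twist_one} together with the super-hurdle structure provides a $\Phi$-preserving reversal breaking the all-super-hurdle configuration, for instance by half-flipping a super-hurdle whose ``successor'' hurdle in the poset is not itself a super-hurdle; the non-fortress case of the induction then yields $d(b_n)\le 1+\Phi(b_n)$.

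The main obstacle is the fortress-avoidance step in the non-fortress inductive case: from any non-fortress $\pi$-map one must exhibit a safe reversal whose result is itself non-fortress. This combinatorial book-keeping rests on the super-hurdle persistence lemma together with a case split on the parity of $h$ and on which hurdles are selected for gluing, closely paralleling the Hannenhalli--Pevzner treatment but phrased locally at the center vertex $v^*$ thanks to Lemma~\ref{L:char2}. Once that is settled, the upper and lower bounds coincide and the theorem follows.
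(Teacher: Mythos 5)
Your strategy (matching upper and lower bounds, with $\Phi=g+h$ and the fortress indicator $f$) is a genuinely different route from the paper, which only exhibits a sequence of $g+h$ (resp.~$g+h+1$) reversals and implicitly takes the Hannenhalli--Pevzner lower bound for granted. In principle your plan is more complete. However, the fortress lower-bound step has a concrete flaw. You write that ``Lemma~\ref{L:3h} asserts that no reversal is safe, i.e.~$\Phi(b_n\rho)\ge\Phi(b_n)$ for every $\rho$,'' but Lemma~\ref{L:3h} applies only when $h=3$. For a fortress with $h\ge 5$ (odd, all super-hurdles), Corollary~\ref{C:safe_reversal} guarantees the \emph{existence} of a safe reversal, so $\min_\rho \Phi(b_n\rho)=\Phi(b_n)-1$ and your minimisation only yields $d(b_n)\ge 1+(\Phi(b_n)-1)=\Phi(b_n)$, not $\Phi(b_n)+1$. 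To repair this you must show a persistence statement: a safe reversal applied to a fortress yields another fortress (which follows from the super-hurdle persistence lemma, since a safe glue of two hurdles leaves exactly the remaining $h-2$ hurdles, all still super-hurdles, and $h-2$ is again odd and $\ne 1$ until you reach $h=3$). Only then can you close the lower bound by induction on $d(b_n)$ or on $\Phi$.

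On the upper-bound side your acknowledged obstacle --- picking a safe reversal that keeps $b_n\rho$ non-fortress --- is exactly the issue, and your proposed fix (half-flip a non-super-hurdle) matches the paper when $h=3$ but needs an argument for $h>3$ odd with some non-super-hurdle present (one must verify the chosen glue does not isolate the lone non-super-hurdle in a way that creates a fortress). The paper sidesteps both difficulties by a single constructive schedule: reduce by safe glues down to exactly three super-hurdles, take one non-safe glue (Lemma~\ref{L:3h}), then finish the even case; for the $+1$ lower bound it relies on Hannenhalli--Pevzner. Your framework is salvageable and arguably cleaner once the fortress-persistence lemma is added, but as written the lower-bound step is incorrect for $h\ge 5$.
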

%%%
%%%%%%%%%%%%%%%%%%%%%%%%%%%%%%%%%%%%%%%%%%%%%%%%%%%%%%%%%%%%%%%%%%%%%%%%%%%%%%%%%%%%%
%%%
\begin{proof}
If $h$ is even, we label the hurdles $H_1\lhd H_2\lhd  \cdots \lhd H_{2k}$.
In case of $k=1$, we have $h=2$ and gluing $H_1$ and $H_2$ is safe.
In case of $k>1$, we glue $H_1$ and $H_{k+1}$, which is safe by Lemma~\ref{L:safe}.
Iterating this $h/2$ times we obtain a $\pi$-map with genus
$g'=g+h/2$ without any hurdles, i.e., a $\pi$-map in which each component is non-orientable.
All these non-orientable components can be reduced to trivial ones via $g'$ slicings.
Accordingly, the total number of reversals, i.e.~$d(b_n)$, is $h/2+g'=h/2 + g+h/2 = g+h$.

If $h$ is odd and $\mathbb{P}_{n,g}$ contains at least one hurdle, which is not a
super-hurdle, then we apply a half-flip, deriving a $\pi$-map of genus $g$ having $(h-1)$
hurdles. This reduces this case to the case of $h$ being even and the total number of
reversals is $1+ (g+h-1) = g+h$.

Finally, suppose $h$ is odd and all hurdles are super-hurdles,
$H_1\lhd H_2\lhd  \cdots \lhd H_{2k+1}$.
By Lemma~\ref{L:safe} we have safe reversals and can reduce the situation to a scenario
of exactly three super-hurdles. By Lemma~\ref{L:3h} there exists no safe reversal, then.
Gluing any pair of these three creates a new hurdle and reduces the scenario to that of
$h=2$. The number of reversal in this case is
$\frac{h-1}{2} + (g+\frac{h-1}{2}) + 1 + 1 = g+h+1$.
\end{proof}

\section{Acknowledgments.}
We acknowledge the financial support of the Future and Emerging
Technologies (FET) programme within the Seventh Framework Programme (FP7) for
Research of the European Commission, under the FET-Proactive grant agreement
TOPDRIM, number FP7-ICT-318121.

\bibliographystyle{elsarticle-num}
\bibliography{signed}

\end{document}